\numberwithin{equation}{section}
\numberwithin{figure}{section}
\theoremstyle{plain}
\newtheorem{thm}{Theorem}
  \theoremstyle{plain}
  \numberwithin{thm}{section}
  \newtheorem{cor}[thm]{Corollary}
  \theoremstyle{plain}
  \newtheorem{lem}[thm]{Lemma}
  \theoremstyle{plain}
   \newtheorem{proposition}[thm]{Proposition}
  \theoremstyle{remark}
  \newtheorem{rem}[thm]{Remark}
  \def\Ddots{\mathinner{\mkern1mu\raise\p@
\vbox{\kern7\p@\hbox{.}}\mkern2mu
\raise4\p@\hbox{.}\mkern2mu\raise7\p@\hbox{.}\mkern1mu}}
\newcommand{\eps}{\varepsilon}
\newcommand{\norm}[1]{\left\| #1 \right\|}
\newcommand{\mklm}[1]{\left\{ #1 \right\}}
\newcommand{\eklm}[1]{\left\langle #1 \right\rangle}
\renewcommand{\d}{\,d}
\newcommand{\N}{{\mathbb N}}
\newcommand{\Z}{{\mathbb Z}}
\newcommand{\C}{{\mathbb C}}
\newcommand{\R}{{\mathbb R}}
\newcommand{\A}{{\mathcal A}}
\newcommand{\F}{{\mathcal F}}
\renewcommand{\H}{{\mathcal H}}
\newcommand{\M}{{\mathcal M}}
\newcommand{\Q}{{\mathbb Q}}
\renewcommand{\O}{{\mathcal O}}
\newcommand{\1}{{\bf 1}}
\renewcommand{\epsilon}{\varepsilon}
\renewcommand{\rho}{\varrho}
\newcommand{\Cinft}{{\rm C^{\infty}}}
\newcommand{\CT}{{\rm C^{\infty}_c}}
\renewcommand{\L}{{\rm L}}
\newcommand{\Lcal}{{\mathcal L}}
\renewcommand{\S}{{\mathcal S}}
\newcommand{\GL}{\mathrm{GL}}
\newcommand{\PGL}{\mathrm{PGL}}
\newcommand{\SL}{\mathrm{SL}}
\newcommand{\SO}{\mathrm{SO}}
\newcommand{\PO}{\mathrm{PO}}
\newcommand{\SU}{\mathrm{SU}}
\newcommand{\Sp}{\mathrm{Sp}}
\newcommand{\g}{{\bf \mathfrak g}}
\renewcommand{\k}{{\bf \mathfrak k}}
\renewcommand{\t}{{\bf \mathfrak t}}
\newcommand{\p}{{\bf \mathfrak p}}
\newcommand{\fa}{{\bf \mathfrak a}}
\newcommand{\fb}{{\bf \mathfrak b}}
\newcommand{\Ad}{\mathrm{Ad}\,}
\newcommand{\ad}{\mathrm{ad}\,}
\renewcommand{\det}{\mathrm{det}\,}
\newcommand{\vol}{\mathrm{vol}\,}
\newcommand{\dist}{\mathrm{dist}\,}
\DeclareMathOperator{\supp}{supp\,}
\DeclareMathOperator{\tr}{tr}
\DeclareMathOperator{\gd}{\partial}
\DeclareMathOperator{\Tr}{Tr}
\newcommand{\bdm}{\begin{displaymath}}
\newcommand{\edm}{\end{displaymath}}
\newcommand{\bq}{\begin{equation}}
\newcommand{\eq}{\end{equation}}
\newcommand{\bqn}{\begin{equation*}}
\newcommand{\eqn}{\end{equation*}}
\newcommand{\bsl}{\backslash}
\newcommand{\cR}{\mathcal R}
\newcommand{\bH}{\mathbb H}
\newcommand{\bA}{\mathbb A}
\newcommand{\fin}{\mathrm{fin}}
\newcommand{\Gal}{\mathrm{Gal}}
\begin{document}
\author{Pablo Ramacher and Satoshi Wakatsuki}
\title[Asymptotics for  Hecke eigenvalues on compact arithmetic quotients]{Asymptotics for Hecke eigenvalues of automorphic forms on compact arithmetic quotients} 
\address{Fachbereich 12 Mathematik und Informatik, Philipps-Universit\"at Marburg, Hans--Meerwein-Str. 6, 35043 Marburg, Germany}
\email{ramacher@mathematik.uni-marburg.de}
\address{Faculty of Mathematics and Physics, Institute of Science and Engineering, Kanazawa University, Kakumamachi, Kanazawa, Ishikawa, 920-1192, Japan}
\email{wakatsuk@staff.kanazawa-u.ac.jp}

\date{November 20, 2020}

\begin{abstract}
In this paper, we describe the asymptotic distribution of Hecke eigenvalues in the Laplace eigenvalue aspect for certain families of Hecke-Maass forms on compact arithmetic quotients. Instead of relying on the trace formula, which was the primary tool in preceding studies on the subject, we use Fourier integral operator methods. This allows us to treat not only spherical, but also non-spherical Hecke-Maass forms with corresponding remainder estimates.
Our asymptotic formulas are available for arbitrary simple and connected algebraic groups over number fields with cocompact arithmetic subgroups. 
\end{abstract}

\maketitle

\setcounter{tocdepth}{1}
\tableofcontents{}

\section{Introduction}\label{sec:1}

In this paper, we describe the asymptotic distribution  of Hecke eigenvalues for automorphic forms on compact arithmetic quotients of semisimple Lie groups in the Laplace eigenvalue aspect in both spherical and non-sperical settings.  In the spherical case, such asymptotics were studied previously by Sarnak  \cite{Sarnak1984} for $\SL(2,\R)$, Imamoglu-Raulf  \cite{IR} for $\SL(2,\C)$,  Matz  \cite{Matz} for  $\SL(n,\C)$, and Matz-Templier  \cite{MT} for $\SL(n,\R)$,  as well as Finis-Matz \cite{FM} and Finis-Lapid \cite{FL} for all simple reductive groups. To our knowledge, non-spherical asymptotics have not been considered so far. Asymptotic formulas for Hecke eigenvalues are related to the asymptotic distribution of Laplace eigenvalues of Hecke-Maas forms, as described by  Weyl's law \cite{DKV1, Mueller, Lapid-Mueller, ramacher10}, and imply   Plancherel density theorems and Sato-Tate equidistribution theorems for Hecke eigenvalues \cite{Sarnak1984, CDF, Serre, Shin, ST, BBR}. From the latter, statistics of low-lying zeros of automorphic $L$-functions can be inferred  \cite{MT, ST}. 

Our results cover basically two settings.  Let $H$ denote a semisimple connected linear algebraic group over the rational number field $\Q$, and write $\bA$ for the adele  ring of $\Q$. As usual, regard $H(\Q)$ as a subgroup of $H(\bA)$ by the diagonal embedding, and assume that $H(\Q)\bsl H(\bA)$ is compact. On the one hand, we derive non-equivariant asymptotics with remainder for Hecke eigenvalues of  automorphic forms in the space $L^2(H(\Q)\bsl H(\bA))$ of square integrable functions on $H(\Q)\bsl H(\bA)$, see Theorem \ref{thm:main}.  On the other hand, we prove equivariant asymptotics with remainder for Hecke eigenvalues of automorphic forms belonging to specific $\sigma$-isotypic components $L^2_\sigma(H(\Q)\bsl H(\bA))$ in  $L^2(H(\Q)\bsl H(\bA))$ for any simple connected algebraic group $H$ and any $K$-type $\sigma \in \widehat K$, where $K$ is a maximal compact subgroup of $G:=H(\R)$, see Theorem \ref{thm:equiv}.  
Our asymptotic formulas for Hecke eigenvalues do imply corresponding Plancherel density theorems and  Sato-Tate equidistribution theorems, see Corollaries \ref{plancherel density} and \ref{Sato-Tate equiv} for the non-equivariant, as well as Corollaries \ref{cor:equiv.planch}  and \ref{cor:equiv.ST} for the equivariant case.  They can also be applied to the study of statistics of low-lying zeros of automorphic $L$-functions as iniciated by Katz-Sarnak in \cite{KS}. The heuristics developed by them suggests that  $L$-functions can be grouped into families  according to the symmetry type exhibited in the distribution of their zeros, and was refined and confirmed for larger classes in \cite{ST, SST}.  In fact, following the approach of \cite[Section 11]{ST} and under some additional assumptions, our results can be used to study low-lying zeros in  rather general situations, see Theorem \ref{thm:lowlyingzeros}.

The major novelty of our approach,  initiated  in \cite{RW}, consists in applying methods from the modern theory of partial differential equations, more precisely, the theory of Fourier integral operators, to the analysis of Hecke--Maass forms. This allows us  to circumvent the study of the geometric side  of the trace formula, the main tool of previous approaches, and study not only spherical Hecke--Maass forms, but also non-spherical ones, which were deemed difficult to treat up to now. In fact, the usual way to proceed in the spherical case is to test the trace formula with a spherical function, and then apply the  Fourier inversion formula for the derivation of  upper bounds \cite{MT,FM}.  A construction of non-spherical test functions based on Arthur's Paley-Wiener theorem was set up in \cite[Proposition 1]{CD84}. 
However, for non-trivial $K$-types the Fourier inversion formula  is unavailable in general \cite{MatzProc}, and so far was established only for one dimensional $K$-types  \cite{Patterson,Shimeno}. An alternative way could consist in combining Herb's explicit formula for the inverse Fourier transform of semisimple orbital integrals \cite{Herb} with  the invariant  Paley-Wiener theorem \cite{CD1990}, since in the cocompact case all $\Q$-rational points are semisimple. This approach requires the study of $K$-type multiplicities of unitary representations and Fourier inversion for singular orbital integrals. In contrast, our method avoids such difficulties.
Let us also mention that in the spherical situations considered previously, spectral asymptotics for  the whole algebra of invariant differential operators were derived, which leads to more refined statements in the  higher rank case, compare  \cite[Section 8]{DKV1}. Within the FIO approach, one only considers eigenfunctions of  a single elliptic differential operator, yielding less refined asymptotics. Nevertheless, they already suffice to derive Plancherel and Sato-Tate  theorems, yielding equidistribution results in a simpler way.  In a future paper, we intend to generalize our approach  to non-compact arithmetic quotients. 

The structure of this paper is as follows. In Section  \ref{sec:nonadelic} we explain our results within a non-adelic framework in the  case where $G=\PGL(n,\R)$ and $K=\PO(n)$, and indicate how  statements about statistics of low-lying zeros of principal $L$-functions can be derived from them in  Section \ref{sec:LLZ}. We commence our analysis in Section \ref{sec:2} by establishing  spectral asymptotics for kernels of Hecke operators by means of Fourier integral operators. Based on this, we first describe the asymptotic distribution of Hecke eigenvalues  in Section \ref{sec:4} in the non-equivariant setting, and prove corresponding equidistribution theorems. After this, we turn to the study of the equivariant situation in Section \ref{sec:5}, which besides  the spectral asymptotics for kernels of Hecke operators derived in Section \ref{sec:2} relies on the Fourier inversion formula for orbital integrals. Examples are discussed in the final section. Throughout this paper, we shall use the notation $\N:=\{0,1,2,\dots\}$ and $\N_*:=\{1,2,\dots\}$. Also, we shall write $a \ll_\gamma b$ for two real numbers $a$ and $b$ and a variable $\gamma$, if there exists a constant $C_\gamma>0$ depending only on $\gamma$ such that $|a| \leq C_\gamma b$. If the implicit constant does not depend on any relevant variable, we shall simply write $a\ll b$. \\

{\bf Acknowledgements}. 
The second author would like to thank Tobias Finis, Jasmin Matz, and Werner M\"uller for helpful discussions. He is partially supported by the JSPS Grant-in-Aid for Scientific Research (No. 18K03235).

\section{Asymptotics for Hecke eigenvalues and low-lying zeros for $\PGL(n,\R)$}\label{sec:nonadelic}

In this section, we shall explain our results within a non-adelic framework  in the case where $G=\PGL(n,\R)$,  $K=\PO(n)$,  and  $n\geq 2$, and then apply them to the study of statistics of low-lying zeros of principal $L$-functions following \cite{ST} and \cite{MT}. This section is of an expository nature, and intends to make our results accessible to a wider audience. In fact, although our results are more general, all relevant analytic and geometric aspects are already present in the non-adelic framework. Of course, the expert reader might prefer to directly move to Section \ref{sec:2}.

\subsection{Division algebras and projective linear groups}
Let $D$ be a central division algebra over $\Q$ such that $D\otimes_\Q \R$ is isomorphic to the ring $\M(n,\R)$ of $(n\times n)$-matrices with real entries. 
Fix a specific ring isomorphism $D\otimes_\Q \R\cong \M(n,\R)$, and define  a reduced norm $\mathrm{Nrd}$ on $D$ in terms of the determinant on $\M(n,\R)$. By construction, $\mathrm{Nrd}$ is a homogeneous polynomial with $\Q$-coefficients.
 Let  $H:=\PGL(1,D)$ denote  the projective linear group over $\Q$ defined by $D$, by which one understands the automorphism group $\mathrm{Aut}(D)$ of $D$,  a simple connected algebraic group, see \cite[Chapter VI, Section 23]{Knus-Merkurjev-Rost-Tignol}. Take a maximal order $\mathcal{O}$ in $D$, and choose a $\Z$-basis $e_1=1,e_2,\dots,e_{n^2}$ of $\mathcal{O}$ so that  $\mathcal{O}=\Z e_1+\cdots+\Z e_{n^2}$. Then, a morphism $f:\tilde{H}:=\GL(1,D)\to \SL(n^2-1)$ over $\Q$ is defined by the adjoint action $g\cdot x\mapsto g x g^{-1}$ of $\tilde{H}$ on $ D$, and the elements $e_2,e_3,\dots,e_{n^2}$ of the basis. Here $\GL(1,D)$ means the general linear group over $\Q$ defined by $D$, see \cite[Chapter VI, Section 20]{Knus-Merkurjev-Rost-Tignol}, so that the set $\tilde{H}(F)$ of $F$-rational points in $\tilde{H}$ is the multiplicative group of $D\otimes_\Q F$ for any extension field $F$ of $\Q$. 
The image of $f$ is identified with $H$, and the group of integral points $\Gamma:=H(\Z)=H(\Q)\cap \SL(n^2-1,\Z)$ is a cocompact lattice of the real Lie group $G:=H(\R)\cong \PGL(n,\R)$.
Note that our main theorems are available for arbitrary cocompact arithmetic congruence subgroups, but we consider only  $H(\Z)$ here for simplicity. 

Let $\Q_p$ and $\Z_p$ denote the $p$-adic number field and the ring of integers, respectively, and define the finite adele ring $\bA_\fin$ of $\Q$ as the restricted direct product $\bA_\fin:=\prod_p^\text{rest}\Q_p$, where $p$ moves over all prime numbers.
Note that $\Q$ is regarded as a subring of the adele ring $\bA:=\R\times\bA_\fin$ of $\Q$ via the diagonal embedding.
Set $K_p:=H(\Z_p)$, and consider the open compact subgroup $K_0:=\prod_p K_p$ of $H(\bA_\fin)$, which  satisfies $H(\Q)\cap K_0=\Gamma$.
Since $H(\bA)=H(\Q) (GK_0)$,  it follows that the arithmetic quotient $\Gamma\bsl G$ is topologically isomorphic to $H(\Q)\bsl H(\bA) / K_0$, because $\SL(1,D):=\{g\in \GL(1,D) \mid \mathrm{Nrd}(g)=1\}$ satisfies the Strong Approximation Property to $\infty$ and one has $\mathrm{Nrd}(K_0)=\prod_p \Z_p^\times$, compare \cite{PR}.  It is known that there exists a finite set $S_0$ of primes such that $D\otimes\Q_p$ is isomorphic to $\M(n,\Q_p)$ iff $p$ does not belong to $S_0$.

\subsection{Asymptotics of Hecke eigenvalues}

In what follows, and for the convenience of the reader who is not  familiar with the adelic language, we shall translate our main theorems, which are stated in that framework,  to the non-adelic setting. Let us begin by introducing for each element $\alpha\in H(\Q)\subset \SL(n^2-1,\Q)$ a Hecke operator $T_{\Gamma\alpha\Gamma}$ on the space  $\L^2(\Gamma\bsl G)$ of square integrable functions on $\Gamma \bsl G$ by setting
\[
(T_{\Gamma\alpha\Gamma}\phi)(x):=\sum_{\Gamma\beta\in \Gamma\bsl\Gamma\alpha\Gamma} \phi(\beta x), \qquad \phi\in \L^2(\Gamma\bsl G). 
\]
For details, we refer the reader to Section \ref{sec:specasymphecke}. Consider further the right action of the maximal compact subgroup $K:=\PO(n)\subset G$ on $\Gamma \bsl G$, and recall the Peter-Weyl decomposition 
 $$
 \L^2(\Gamma\bsl G)=\bigoplus_{\sigma\in \widehat{K}}\L^2_\sigma(\Gamma\bsl G)
 $$
  of $\L^2(\Gamma \bsl G)$ into $\sigma$-isotypic components  $\L^2_\sigma(\Gamma\bsl G):=(\L^2(\Gamma\bsl G) \otimes \sigma^\vee)^K$, where  $\sigma^\vee$ denotes the contragredient representation of $\sigma\in \widehat K$, and $\widehat K$ is the unitary dual of $K$.  The operator $T_{\Gamma\alpha\Gamma}$ obviously commutes with the right action of $K$ so that $T_{\Gamma\alpha\Gamma}$ acts on each subspace $\L^2_\sigma(\Gamma\bsl G)$.

From the viewpoint of automorphic representations, there exists an orthonormal basis $\{\phi_j \}_{j\geq 0}$ in $\L^2(\Gamma\bsl G)$ such that each $\phi_j$ is a simultaneous eigenfunction for the Beltrami-Laplace operator $\Delta$ on $G$ and the Hecke operators $T_{\Gamma\alpha\Gamma}$ for every $\alpha\in H(\Q)$ such that the denominators of the entries of $\alpha$ in $\SL(n^2-1,\Q)$ are prime to $\prod_{p\in S_0}p$, where $S_0$ was introduced above.  Their eigenvalues are denoted by
\[
\Delta\phi_j=\lambda_j\phi_j, \qquad T_{\Gamma\alpha\Gamma}\phi_j=\lambda_j(\alpha)\phi_j,
\]
and $0=\lambda_0<\lambda_1\leq \lambda_2\leq\cdots$. 
We may suppose that $\phi_j$ belongs to a single $K$-type. 
Set $\mu_j:=\sqrt{\lambda_j}$, $d:=\dim G=n^2-1$, and $d_\sigma:=\dim\sigma$ for each $K$-type $\sigma$. Our first main result deals with the asymptotic distribution of Hecke eigenvalues and is a special case of Theorems \ref{thm:main} and \ref{thm:equiv}.
\begin{thm}\label{thm:pgl}
For any prime $p\not\in S_0$, $\kappa\in\N$, $\alpha\in H(\Q)\cap \M(n^2-1,p^{-\kappa}\Z)$, and small $\eps>0$,
\[
\sum_{\mu_j\leq \mu } \lambda_j(\alpha) = \delta_\alpha \frac{\vol(\Gamma\bsl G)\, \varpi_d}{(2\pi)^d} \mu^d  +  O(\mu^{d-1}\, p^{n^4\kappa}),
\]
\bqn
\sum_{\stackrel{\mu_j\leq \mu,}{ \phi_j \in \L^2_\sigma(\Gamma\bsl G) }} \lambda_j(\alpha) =  \delta_\alpha \frac{ d_\sigma \,  \vol(\Gamma\bsl G/K) \, \varpi_{d-\dim K} }{(2\pi)^{d-\dim K} }  \mu^{d-\dim K}  +  O_\eps(\mu^{d-\dim K-\frac{d-\dim K-1}{d-\dim K+1}+\eps}\, p^{n^4\kappa}),
\eqn
where $\delta_\alpha:=1$ if $\alpha\in \Gamma$, $\delta_\alpha:=0$ otherwise, $\vol$  denotes the Riemannian volume, and $\varpi_m:=\pi^{\frac m 2}/\Gamma(1+\frac m 2)$ stands for the volume of the unit $m$-sphere.
\qed 
\end{thm}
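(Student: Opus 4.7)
The plan is to reduce everything to a trace computation and then invoke the spectral asymptotics for kernels of Hecke operators developed in Section~\ref{sec:2}. Writing $E_\mu$ for the spectral projector onto eigenfunctions of $\sqrt{-\Delta}$ with eigenvalues $\leq \mu$, and using that the $\phi_j$ are simultaneous eigenfunctions, we have
\[
\sum_{\mu_j \leq \mu} \lambda_j(\alpha) = \tr\bigl(T_{\Gamma\alpha\Gamma}\circ E_\mu\bigr),
\]
and the analogous identity with the sum restricted to a $K$-type $\sigma$ holds with $E_\mu$ replaced by its compression to $\L^2_\sigma(\Gamma\bsl G)$. Unfolding the Hecke operator gives the pre-trace formula
\[
\tr\bigl(T_{\Gamma\alpha\Gamma}\circ E_\mu\bigr) = \sum_{\Gamma\beta\in\Gamma\bsl\Gamma\alpha\Gamma}\int_{\Gamma\bsl G} K_{E_\mu}(\beta x,x)\d x,
\]
so the question becomes: (i) give pointwise/on-diagonal asymptotics for the kernel $K_{E_\mu}$ using Fourier integral operator methods, and (ii) bound the number of cosets in $\Gamma\bsl\Gamma\alpha\Gamma$ uniformly in $\alpha$.

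For step~(i), I would invoke the FIO construction of an approximate half-wave propagator for $\sqrt{-\Delta}$ on $\Gamma\bsl G$, as in Hörmander's proof of the sharp Weyl law. Combined with a Tauberian argument, this yields the identity term contribution $\vol(\Gamma\bsl G)\varpi_d(2\pi)^{-d}\mu^d$ when $\beta=e$ (i.e.\ when $\alpha\in\Gamma$), and shows that off-diagonal terms $\beta\neq e$ contribute $O(\mu^{d-1})$ from each coset: the kernel of $\cos(t\sqrt{-\Delta})$ has wave-front set on the geodesic flow, and non-trivial $\beta$ move points off the diagonal so that stationary phase in $t$ produces a gain. This handles the non-equivariant statement once multiplied by the coset count. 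For the equivariant case, one projects to the $\sigma$-isotype via averaging against the character $\chi_\sigma$ on $K$ and uses that integration over $K$-orbits reduces the effective dimension by $\dim K$; the remainder $\mu^{d-\dim K-\frac{d-\dim K-1}{d-\dim K+1}+\eps}$ is the standard equivariant Weyl remainder due to the presence of totally geodesic $K$-orbits and is obtained via the clean intersection / singular equivariant stationary phase analysis to be carried out in Section~\ref{sec:5}, combined with the Fourier inversion formula for orbital integrals.

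For step~(ii), the count $|\Gamma\bsl\Gamma\alpha\Gamma|$ must be bounded in terms of the denominators of $\alpha$. Writing $\alpha\in H(\Q)\cap \M(n^2-1,p^{-\kappa}\Z)$, one passes to the adelic picture $H(\Q)\alpha K_0\subset H(\bA)$: the intersection $\Gamma\alpha\Gamma$ corresponds to a $K_0$-double coset inside $H(\Q_p)$ supported on elements of $p$-adic size controlled by $p^\kappa$, while at places $q\neq p$ the local factor is $K_q$. The number of $K_p$-cosets in $K_p \alpha K_p$ is bounded polynomially in $p^\kappa$ of degree $\leq n^4$, using the Iwahori/Cartan decomposition and explicit orbit counting on the Bruhat-Tits building of $H(\Q_p)$, giving the factor $p^{n^4\kappa}$. (The exponent $n^4$ is crude but suffices.)

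The main obstacle I expect is making the FIO remainder estimate uniform in $\alpha$, that is, showing the off-diagonal terms $\beta\neq e$ really contribute $O(\mu^{d-1})$ (respectively the equivariant remainder) per coset independently of how $\beta$ sits in the double coset. For large $\beta$ the smoothing window in $t$ still decouples the wave kernel from the identity, but one must make sure that the implicit constants do not deteriorate, which reduces to a clean estimate on $K_{E_\mu}(\beta x, x)$ as a function of $\beta$; this is essentially what Section~\ref{sec:2} is designed to supply. Combined with the coset count of step~(ii), one then assembles the two bounds to obtain the stated error terms. Finally, the translation of the adelic main theorems (Theorems~\ref{thm:main} and~\ref{thm:equiv}) to the classical formulation above is routine, using $\Gamma\bsl G\cong H(\Q)\bsl H(\bA)/K_0$ and $\mathrm{vol}(K_0)=1$ for the chosen Haar measure.
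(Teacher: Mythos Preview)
Your overall architecture matches the paper's: the theorem is indeed a specialization of Theorems~\ref{thm:main} and~\ref{thm:equiv}, and those in turn rest on the FIO kernel asymptotics of Section~\ref{sec:2} plus a coset count. For the non-equivariant statement your sketch is essentially correct, though the accounting is slightly off: in the paper the off-diagonal smoothed kernel is $O\bigl(\mu^{(d-1)/2}\,\dist(\Gamma\beta g,\Gamma g)^{-(d-1)/2}\bigr)$ (Proposition~\ref{prop:31.05.2017}), and one needs the arithmetic lower bound $\dist(\Gamma\beta g,\Gamma g)\gg p^{-\kappa}$ coming from $\beta\in \M(n^2-1,p^{-\kappa}\Z)$; the $\mu^{d-1}$ bottleneck then comes not from the off-diagonal terms themselves but from the Tauberian passage from the smoothed projector $\tilde s_\mu$ to the sharp cutoff, combined with the trivial bound $|\lambda_j(\alpha)|\le |\Gamma\bsl\Gamma\alpha\Gamma|$.

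There is, however, a genuine gap in your equivariant sketch. The exponent $d-\dim K-\tfrac{d-\dim K-1}{d-\dim K+1}+\eps$ is \emph{not} the ``standard equivariant Weyl remainder'' (that would be $d-\dim K-1$, cf.~\eqref{eq:12.4.2018}). It arises from a phenomenon specific to the Hecke sum: a Hecke point $\beta g$ need not lie on the $K$-orbit of $g$, yet can lie arbitrarily close to it, and for such points Proposition~\ref{prop:30.01.2017} gives no saving. The paper handles this (Lemma~\ref{lem:21.04.2018}) by introducing a cutoff $\eps>0$: points with $\dist(\Gamma\beta gK,\Gamma gK)>\eps$ contribute $O\bigl((\mu/\eps)^{(d-\dim K-1)/2}\bigr)$ per coset, while points within distance~$\eps$ are controlled by the volume $\int_D f_\eps(g^{-1}\beta g)\,dg$, which is bounded by $O(\eps^{1-s})$ via Herb's Fourier inversion for orbital integrals (Proposition~\ref{lem:20190605}). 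The stated exponent then comes from optimizing $\eps$ between these two competing errors. Your reference to the orbital integral inversion is pointing at the right tool, but the mechanism producing the exponent is this $\eps$-balancing, not a clean-intersection remainder in the Weyl law itself.
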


\subsection{Equidistribution theorems for Satake parameters} As a consequence of Theorem \ref{thm:pgl},  we obtain certain equidistribution statements which are related to the generalized Ramanujan conjecture.   Choose a prime $p\not\in S_0$, and define a subring $\Z(p)$ of $\Q$ by setting $\Z(p):=\{ p^{-l}m \mid m\in\Z$, $l\in\Z\}$.
We have  $G_p:=H(\Q_p)\cong \PGL(n,\Q_p)$ as well as $K_p\cong \PGL(n,\Z_p)$, and in view of  $H(\bA)=H(\Q)(GK_0)$ and the strong approximation property for $\SL(1,D)$ one can prove that the mapping
\[
K_p\bsl G_p/K_p\ni K_p \alpha K_p\, \longmapsto \, H(\Q)\cap K_0\alpha K_0\in \Gamma\bsl H(\Z(p))/\Gamma
\]
is bijective. As a consequence, the unramified Hecke algebra $\H^\mathrm{ur}(K_p\bsl H(\Q_p)/K_p):=C_c^\infty (K_p\bsl G_p/K_p)$ is isomorphic to $C_c(\Gamma\bsl H(\Z(p))/\Gamma)$, and it is known that $\H^\mathrm{ur}(K_p\bsl H(\Q_p)/K_p)$ is generated by the characteristic functions of $K_p \gamma_t K_p$, $1\leq t\leq n-1$, where 
\[
\gamma_t :=\mathrm{diag}(\underbrace{p,\dots,p}_{\text{$t$-times}},1,\dots,1) \, \Q_p^\times \in \PGL(n,\Q_p).
\]
Next, define $\beta_t$ by  $\Gamma\beta_t\Gamma:= H(\Q)\cap K_0\gamma_t^{-1}K_0$, and  the \emph{Satake parameter} of a Hecke-Maass form $\phi_j$ at $p$  as the $n$-tuple  $\alpha^{(j)}(p)=\big (\alpha_1^{(j)}(p),\alpha_2^{(j)}(p),\dots, \alpha_n^{(j)}(p)\big )\in \widehat{T}/\mathfrak{S}_n$ consisting of roots of the equation
\[
x^n-p^{-\frac{n-1}{2}}\lambda_j(\beta_1)x^{n-1}+\cdots +(-1)^t p^{-\frac{(n-t)t}{2}}\lambda_j(\beta_t)x^{n-t}+\cdots+ (-1)^{n-1} p^{-\frac{n-1}{2}}\lambda_j(\beta_{n-1})x + (-1)^n = 0,
\]
where $\widehat{T}:=\{ (u_1,\dots,u_n)\in (\C^\times)^n \mid u_1\cdots u_n=1\}$,  and $\mathfrak{S}_n$ denotes the symmetric group of degree $n$. 
The generalized Ramanujan conjecture predicts that for $j>0$ the Satake parameter $\alpha^{(j)}(p)$  belongs to $\widehat{T}_c/\mathfrak{S}_n$, where $\widehat{T}_c:=\{ (u_1,\dots,u_n)\in \widehat{T} \mid |u_t|=1 \text{ for all }1\leq t\leq n\}$.
We now  introduce the Plancherel measure $\widehat{m}_p^\mathrm{Pl,ur}$ and the Sato-Tate measure $\widehat{m}^\mathrm{ST}$ on $\widehat{T}/\mathfrak{S}_n$. Their supports  are contained in $\widehat{T}_c$, and with respect to the coordinates $(e^{i\theta_1},\dots,e^{i\theta_n})\in\widehat{T}_c$ are defined as  \cite{Sarnak1984}
\[
\widehat{m}_p^\mathrm{Pl,ur} :=c_p \, \prod_{1\leq k<j \leq n}\frac{|e^{i\theta_k}-e^{i\theta_j}|^2}{|p^{-1}e^{i\theta_k}-e^{i\theta_j}|^2} \, \d \theta_1\cdots \d \theta_{n-1},
\]
\[
\widehat{m}^\mathrm{ST} :=c_\infty \, \prod_{1\leq k<j \leq n}|e^{i\theta_k}-e^{i\theta_j}|^2 \, \d \theta_1\cdots \d \theta_{n-1}
\]
 where   $c_p$ and $c_\infty$ are  constants determined by requiring  $\widehat{m}_p^\mathrm{Pl,ur}(\widehat{T}_c/\mathfrak{S}_n)=\widehat{m}^\mathrm{ST}(\widehat{T}_c/\mathfrak{S}_n)=1$. 
 Set 
\[
\mathfrak{F}(\mu):=\{  j\in\N \mid \mu_j \leq \mu  \} ,\qquad \mathfrak{F}_\sigma(\mu):=\{  j\in\N \mid \mu_j \leq \mu , \;\; \phi_j\in \L^2_\sigma(\Gamma\bsl G)  \}. 
\]
The following two corollaries are direct consequences of   Theorem \ref{thm:pgl}, and present some evidence towards the generalized Ramanujan conjecture. They are special cases of Corollaries \ref{plancherel density} and \ref{Sato-Tate equiv}, and Corollaries \ref{cor:equiv.planch}  and \ref{cor:equiv.ST}, respectively.

\begin{cor}[\bf Plancherel density theorem]
Choose a prime $p\not\in S_0$.
Then, one obtains
\[
\lim_{\mu\to\infty}\frac{1}{ |\mathfrak{F}(\mu)|} \sum_{j\in \mathfrak{F}(\mu)} \delta_{\alpha^{(j)}(p)} = \widehat{m}_p^\mathrm{Pl,ur} ,\qquad \lim_{\mu\to\infty}\frac{1}{|\mathfrak{F}_\sigma(\mu)|} \sum_{j\in \mathfrak{F}_\sigma(\mu)}  \delta_{\alpha^{(j)}(p)} = \widehat{m}_p^\mathrm{Pl,ur}.
\]
where $\delta_{\alpha^{(j)}(p)}$ denotes the Dirac delta measure at $\alpha^{(j)}(p)\in \widehat{T}/\mathfrak{S}_n$.
\qed
\end{cor}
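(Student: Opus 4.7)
The plan is to deduce the corollary from Theorem \ref{thm:pgl} by combining the Stone--Weierstrass theorem with the Satake isomorphism. Since $\widehat{T}_c/\mathfrak{S}_n$ is a compact Hausdorff space, weak-$*$ convergence of the empirical probability measures $|\mathfrak{F}(\mu)|^{-1}\sum_{j\in\mathfrak{F}(\mu)}\delta_{\alpha^{(j)}(p)}$ to $\widehat{m}_p^\mathrm{Pl,ur}$ only needs to be verified on a dense subalgebra of $C(\widehat{T}_c/\mathfrak{S}_n)$. The symmetric Laurent polynomials in $u_1,\dots,u_n$ modulo $u_1\cdots u_n=1$ form such a subalgebra---unital, point-separating, and closed under complex conjugation---and via the Satake isomorphism they are identified with the unramified Hecke algebra $\H^\mathrm{ur}(K_p\bsl H(\Q_p)/K_p)$.

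Concretely, every symmetric polynomial $P$ in the Satake parameters is the Satake transform $\widehat{f}_P$ of a finite linear combination $f_P=\sum_\alpha c_\alpha \mathbf{1}_{K_p\alpha K_p}$ of characteristic functions of double cosets. The defining polynomial equation for $\alpha^{(j)}(p)$ recalled in the excerpt, combined with the bijection $K_p\bsl G_p/K_p\leftrightarrow\Gamma\bsl H(\Z(p))/\Gamma$, yields explicit identities of the form $P(\alpha^{(j)}(p))=\sum_\alpha c_\alpha\,p^{-e(\alpha)}\,\lambda_j(\alpha')$, where $e(\alpha)\geq 0$ are fixed exponents and each Hecke pre-image $\alpha'$ lies in $H(\Q)\cap \M(n^2-1,p^{-\kappa}\Z)$ for some $\kappa$ depending only on $\alpha$. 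Applying Theorem \ref{thm:pgl} term by term and dividing by the Weyl asymptotic $|\mathfrak{F}(\mu)|\sim\vol(\Gamma\bsl G)\varpi_d(2\pi)^{-d}\mu^d$ (the same theorem with $\alpha=e$), each $\alpha'\notin\Gamma$ contributes only through a remainder of size $O(\mu^{-1})$, while the unique term with $\alpha'\in\Gamma$---the coefficient of $\mathbf{1}_{K_p}$ in $f_P$---contributes precisely $f_P(e)$ in the limit. Hence $\lim_{\mu\to\infty}|\mathfrak{F}(\mu)|^{-1}\sum_{j\in\mathfrak{F}(\mu)}P(\alpha^{(j)}(p))=f_P(e)$.

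To match this limit with the right-hand side of the claim one invokes Macdonald's Plancherel inversion formula for the unramified spherical dual of $G_p$, which yields $f_P(e)=\int_{\widehat{T}_c/\mathfrak{S}_n}\widehat{f}_P\,d\widehat{m}_p^\mathrm{Pl,ur}$; the probability normalization $\widehat{m}_p^\mathrm{Pl,ur}(\widehat{T}_c/\mathfrak{S}_n)=1$ is consistent with taking $f_P=\mathbf{1}_{K_p}$, for which $\widehat{f}_P\equiv 1$ and $f_P(e)=1$. The equivariant version follows identically, using the second asymptotic in Theorem \ref{thm:pgl} together with the equivariant Weyl law $|\mathfrak{F}_\sigma(\mu)|\sim d_\sigma\vol(\Gamma\bsl G/K)\varpi_{d-\dim K}(2\pi)^{-(d-\dim K)}\mu^{d-\dim K}$, with the same cancellation of leading factors. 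The principal obstacle is the careful tracking of normalizations---the twisting powers $p^{-(n-t)t/2}$ in the definition of $\alpha^{(j)}(p)$, the Haar measure on $K_p$ that underlies the convolution structure of the Hecke algebra, and the volume factor in the Weyl law---which must all be reconciled so that the leading constants on both sides of Macdonald's inversion line up; once this bookkeeping is done, no further analytic input beyond Theorem \ref{thm:pgl} is required.
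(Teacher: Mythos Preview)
Your argument is correct in substance and coincides with the paper's own approach: the paper deduces this corollary from Theorem~\ref{thm:pgl} (via its adelic form, Corollary~\ref{cor:main}) by testing against each element of the unramified Hecke algebra, invoking Weyl's law to normalize, and then applying the local Plancherel identity $f(1)=\widehat m_p^{\mathrm{Pl,ur}}(\widehat f)$---exactly the computation you outline with the Satake isomorphism and Macdonald's formula.

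One point deserves comment. You invoke Stone--Weierstrass on the compact torus $\widehat T_c/\mathfrak S_n$, but the Satake parameters $\alpha^{(j)}(p)$ are not known to lie there; that they do is precisely the generalized Ramanujan conjecture. The empirical measures therefore live on $\widehat T/\mathfrak S_n$, and density of polynomials in $C(\widehat T_c/\mathfrak S_n)$ does not by itself justify the passage from Hecke-algebra test functions to arbitrary continuous ones. The paper sidesteps this: its general statement (Corollary~\ref{plancherel density}) is formulated only for $f_S\in\mathcal H^{\mathrm{ur}}(H(\Q_S))$, so no density argument is needed. If you wish to upgrade to genuine weak-$*$ convergence, you should first confine the $\alpha^{(j)}(p)$ to a fixed compact subset of $\widehat T/\mathfrak S_n$ using the unitarity bound $p^{-(n-1)/2}\le|\alpha_i^{(j)}(p)|\le p^{(n-1)/2}$, and then apply Stone--Weierstrass on that compact set; alternatively, simply interpret the limit as convergence against the Hecke algebra, as the paper does.
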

\begin{cor}[\bf Sato-Tate equidistribution theorem]
Choose a prime $p\not\in S_0$, and let $\{(p_k,\mu_k)\}_{k\geq 1}$ be a sequence such that $p_k\to \infty$ and $p_k^l / \mu_k \to 0$ as $k\to \infty$ for any integer $l\geq 1$.
Then
\[
\lim_{k\to\infty}\frac{1}{ |\mathfrak{F}(\mu_k)|} \sum_{j\in \mathfrak{F}(\mu_k)} \delta_{\alpha^{(j)}(p_k)} = \widehat{m}^\mathrm{ST} ,\qquad \lim_{k\to\infty}\frac{1}{|\mathfrak{F}_\sigma(\mu_k)|} \sum_{j\in \mathfrak{F}_\sigma(\mu_k)}  \delta_{\alpha^{(j)}(p_k)} = \widehat{m}^\mathrm{ST}.
\]
\qed
\end{cor}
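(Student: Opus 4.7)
The plan is to combine Weyl's equidistribution criterion, the quantitative asymptotics of Theorem \ref{thm:pgl}, and the fact that the unramified Plancherel measure at $p$ tends weakly to the Sato--Tate measure as $p\to\infty$. Since both sides are probability measures supported on the compact set $\widehat{T}_c/\mathfrak{S}_n$, it suffices to test against a dense subalgebra of $C(\widehat{T}_c/\mathfrak{S}_n)$. The natural choice is the algebra of Weyl-invariant Laurent polynomials on $\widehat{T}$, which by Peter--Weyl is spanned by the characters $\chi_\pi$ of the irreducible finite-dimensional representations $\pi$ of the dual group $\SL(n,\C)$.

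The first step is algebraic. For a fixed such $\chi_\pi$, the Satake isomorphism identifies $\chi_\pi$ with an element $f_\pi\in\H^{\mathrm{ur}}(K_p\bsl G_p/K_p)\cong C_c(\Gamma\bsl H(\Z(p))/\Gamma)$, which in turn is a finite $\Q$-linear combination
\[
f_\pi \,=\, c_\pi^0(p)\,\1_\Gamma + \sum_{t} c_\pi^t(p)\, \1_{\Gamma \beta_t' \Gamma},
\]
where the double cosets $\Gamma\beta_t'\Gamma$ are products of the $\Gamma\beta_s\Gamma$ and none of them is $\Gamma$ itself. By definition of the Satake parameters, we then have the pointwise identity
\[
\chi_\pi\bigl(\alpha^{(j)}(p)\bigr) = c_\pi^0(p) + \sum_{t} c_\pi^t(p)\, p^{-a_t}\, \lambda_j(\beta_t'),
\]
with exponents $a_t\ge 0$ coming from the normalization in the definition of $\alpha^{(j)}(p)$. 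Crucially, the scalar $c_\pi^0(p)$ is exactly the Plancherel mass of $\chi_\pi$: by the Plancherel formula for the spherical Hecke algebra at $p$,
\[
c_\pi^0(p) \,=\, \int_{\widehat{T}_c/\mathfrak{S}_n} \chi_\pi\, \d\widehat{m}_p^\mathrm{Pl,ur}.
\]
Checking these identities is routine but is the conceptual core of the argument.

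The second step applies Theorem \ref{thm:pgl} to each summand. Each $\beta_t'$ has entries with denominators dividing some bounded power $p^{\kappa_\pi}$, where $\kappa_\pi$ depends only on the highest weight of $\pi$ and not on $p$. Since $\beta_t'\notin\Gamma$, the main term $\delta_{\beta_t'}$ vanishes, so
\[
\sum_{j\in\mathfrak{F}(\mu)} \lambda_j(\beta_t') \;=\; O\!\bigl(\mu^{d-1}\, p^{n^4\kappa_\pi}\bigr),
\]
while for the trivial coset $\lambda_j(\Gamma)=1$ and Theorem \ref{thm:pgl} gives $|\mathfrak{F}(\mu)|=\tfrac{\vol(\Gamma\bsl G)\varpi_d}{(2\pi)^d}\mu^d + O(\mu^{d-1})$. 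Dividing, one obtains
\[
\frac{1}{|\mathfrak{F}(\mu)|}\sum_{j\in\mathfrak{F}(\mu)} \chi_\pi\bigl(\alpha^{(j)}(p)\bigr)
\;=\; c_\pi^0(p) + O_\pi\!\bigl(\mu^{-1}\, p^{C_\pi}\bigr)
\]
for some constant $C_\pi$ depending only on $\pi$. Specializing to $(p,\mu)=(p_k,\mu_k)$, the hypothesis $p_k^l/\mu_k\to 0$ for every $l\ge 1$ (applied with $l=C_\pi$) forces the error to vanish.

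The third step is the weak convergence $\widehat{m}_{p_k}^\mathrm{Pl,ur}\to\widehat{m}^\mathrm{ST}$. This is a direct verification from the explicit density: as $p\to\infty$, the factor $|p^{-1}e^{i\theta_k}-e^{i\theta_j}|^{-2}$ converges uniformly on $\widehat{T}_c$ to $1$, and the normalization constants satisfy $c_p\to c_\infty$. Hence $c_\pi^0(p_k)\to \int\chi_\pi\, \d\widehat{m}^\mathrm{ST}$, and the above gives the desired convergence on each $\chi_\pi$. The equivariant version is treated identically by replacing Theorem \ref{thm:main} with Theorem \ref{thm:equiv}; note that since $d-\dim K\ge 1$ for $G=\PGL(n,\R)$ with $n\ge 2$, the equivariant remainder power of $\mu$ is still negative and the argument goes through.

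The main obstacle is the bookkeeping in the first step: one must verify that the constant $\kappa_\pi$ controlling the support of $f_\pi$ on $H(\Q)$ depends only on the highest weight of $\pi$ and not on $p$, so that the error terms in Theorem \ref{thm:pgl} can be absorbed by the growth condition on $\mu_k$. This is essentially the statement that the structure constants of the spherical Hecke algebra in the $\{\1_{K_p\gamma K_p}\}$ basis are polynomial in $p$, which is standard but should be invoked carefully.
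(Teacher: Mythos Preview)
Your argument is correct and follows the same route as the paper: test against Satake transforms of Hecke functions (equivalently, Weyl-invariant Laurent polynomials on $\widehat T$), apply the asymptotic formula of Theorem \ref{thm:pgl} to each double-coset summand, note that the truncation parameter $\kappa_\pi$ is independent of $p$ so that the hypothesis $p_k^l/\mu_k\to 0$ kills the error, and finish with the weak convergence $\widehat m_p^{\mathrm{Pl,ur}}\to\widehat m^{\mathrm{ST}}$. This is exactly the content of the paper's proof of Corollary \ref{Sato-Tate equiv} (and its equivariant counterpart), specialized to $\PGL(n)$ and made more explicit; your identification $c_\pi^0(p)=f_\pi(1)=\widehat m_p^{\mathrm{Pl,ur}}(\chi_\pi)$ is precisely the Plancherel relation \eqref{eq:localplan}.

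One small imprecision: you write that ``both sides are probability measures supported on the compact set $\widehat T_c/\mathfrak S_n$'', but the counting measures on the left are supported on $\widehat T/\mathfrak S_n$, not $\widehat T_c/\mathfrak S_n$, since the generalized Ramanujan conjecture is open. This does not affect your argument, because your test functions $\chi_\pi$ are regular on all of $\widehat T/\mathfrak S_n$ and span the image of the Satake isomorphism; indeed, the paper's general Corollary \ref{Sato-Tate equiv} is carefully stated only for test functions of this kind rather than for arbitrary continuous functions on the tempered locus. If you want the statement as literal weak-$*$ convergence of probability measures, you would need to add a word about tightness (e.g.\ via the trivial bound $|\alpha_t^{(j)}(p)|\leq p^{(n-1)/2}$), but the paper itself does not dwell on this point.
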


\subsection{Low-lying zeros of principal $L$-functions} \label{sec:LLZ}

As another consequence of Theorem \ref{thm:pgl} we are able to give some results about the statistics of low-lying zeros of $L$-functions.  By the strong multiplicity-one theorem, each $\phi_j$ determines a unique automorphic representation $\pi_j=\otimes_v \pi_{j,v}$ of $H$, to which one can associate  the principal $L$-function 
\bqn 
L(s,\pi_j):=\prod_p L_p(s,\pi_{j,p})
\eqn
 by Godement-Jacquet theory, see \cite{GJ,Jacquet,Badulescu} for details. In our context,  for each prime $p\not\in S_0$, the local $L$-factor $L_p(s,\pi_{j,p})$ is given in terms of  the Satake parameters by 
\[
L_p(s,\pi_{j,p}):=(1-\alpha_1^{(j)}(p) \, p^{-s})^{-1}(1-\alpha_2^{(j)}(p) \, p^{-s})^{-1}\cdots (1-\alpha_n^{(j)}(p) \, p^{-s})^{-1}.
\]
Furthermore, $L(s,\pi_j)$ can be analytically continued to an entire function on $\C$ if $j>0$. If $j=0$, $\phi_0$ is  constant on $\Gamma \bsl G$, and consequently on $\Gamma\bsl G/K$, every $\pi_{0,v}$ equals the trivial representation, and  $L(s,\pi_0)=\prod_{t=1}^n\zeta(s+\frac{n+1}{2}-t)$, where $\zeta(s)$ is the Riemann zeta function.  As a consequence, and following \cite{Sfamily} and \cite{SST}, we can regard $\mathfrak{F}_\sigma(\mu)$ as a family of $L$-functions.
Notice that an automorphic representation may contain  Hecke-Maass forms $\phi_j$ belonging to several $K$-types $\sigma$. Later,  we will introduce  families of automorphic representations in a more general setting following \cite{ST},  $\mathfrak{F}_\sigma(\mu)$ being an instance of such a family in the present context.
 
In order  to study the statistics of low-lying zeros of the $L$-functions $L(s,\pi_j)$ belonging to  the family $\mathfrak{F}_\sigma(\mu)$ one introduces the \emph{average analytic conductor} 
\[
\log C(\mathfrak{F}_\sigma(\mu)):=\frac{1}{|\mathfrak{F}_\sigma(\mu)|} \sum_{j\in  \mathfrak{F}_\sigma(\mu)} \log C(\pi_j) ,
\]
where $C(\pi_j)$ denotes the analytic conductor of $\pi_j$, and we refer to \cite{IS2000}
 and \cite[Chapter 5]{IK} for its definition.
It is obvious that $C(\mathfrak{F}_\sigma(\mu)) \asymp \mu^n$ as $\mu\to\infty$. Choose a Paley-Wiener function $\Phi$ on $\C$ whose Fourier transform $\widehat{\Phi}$ has sufficiently small support on $\R$, and define the \emph{average one-level density} of the family $\mathfrak{F}_\sigma(\mu)$ as
\[
\mathcal{D}_1(\mathfrak{F}_\sigma(\mu);\Phi) := \frac{1}{|\mathfrak{F}_\sigma(\mu)|} \sum_{j\in \mathfrak{F}_\sigma(\mu)} \sum_{ \rho_j=\frac{1}{2}+i \gamma_j }  \Phi\left(  \frac{\gamma_j}{2\pi} \log C(\mathfrak{F}_\sigma(\mu)) \right) ,
\]
where $\rho_j$ ranges over all non-trivial zeros of $L(s,\pi_j)$. We then have the following 
\begin{thm}\label{thm:lowlyingzeros} If $n\geq 3$, 
\[
\lim_{\mu\to \infty} \mathcal{D}_1(\mathfrak{F}_\sigma(\mu);\Phi) =\int_{-\infty}^{\infty} \Phi(x) \, W(\mathrm{U})(x) \, \d x = \widehat{\Phi}(0),
\]
while if $n=2$,
\[
\lim_{\mu\to \infty} \mathcal{D}_1(\mathfrak{F}_\sigma(\mu);\Phi) =\begin{cases} \int_{-\infty}^{\infty} \Phi(x) \, W(\mathrm{SO}_\mathrm{even})(x) \, \d x & \text{if $\sigma$ is trivial,} \\ \int_{-\infty}^{\infty} \Phi(x) \, W(\mathrm{SO}_\mathrm{odd})(x) \, \d x & \text{if $\sigma=\det$,}  \\ \int_{-\infty}^{\infty} \Phi(x) \, W(\mathrm{O})(x) \, \d x & \text{if $\sigma$ is $2$-dimensional.} \end{cases} 
\]
Here the density functions $W$ are given by
\[
W(\mathrm{U})=1, \quad W(\mathrm{SO}_\mathrm{even})=1+\frac{\sin 2\pi x}{2\pi x}, \quad W(\mathrm{SO}_\mathrm{odd})=1-\frac{\sin 2\pi x}{2\pi x}+\delta_0(x),
\]
\[
W(\mathrm{O})=\frac{1}{2}W(\mathrm{SO}_\mathrm{even})+\frac{1}{2}W(\mathrm{SO}_\mathrm{odd}).
\]
\end{thm}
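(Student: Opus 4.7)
The plan is to follow the template of Shin--Templier \cite[Section 11]{ST} and Matz--Templier \cite{MT}, with Theorem \ref{thm:pgl} substituting for the asymptotic inputs used there. For each $j\in\mathfrak{F}_\sigma(\mu)$ with $j>0$ the explicit formula for $L(s,\pi_j)$ reads, schematically,
$$\sum_{\rho_j=\frac{1}{2}+i\gamma_j}\Phi\Big(\tfrac{\gamma_j\log C(\mathfrak{F}_\sigma(\mu))}{2\pi}\Big) = \widehat{\Phi}(0)\,\tfrac{\log C(\pi_j)}{\log C(\mathfrak{F}_\sigma(\mu))} + A_\infty(\pi_{j,\infty};\Phi) - 2\sum_{p,\,k\geq 1}\tfrac{\log p}{\log C(\mathfrak{F}_\sigma(\mu))}\widehat{\Phi}\Big(\tfrac{k\log p}{\log C(\mathfrak{F}_\sigma(\mu))}\Big)\tfrac{a_{\pi_j}(p^k)}{p^{k/2}},$$
where $a_{\pi_j}(p^k):=\sum_{t=1}^{n}\alpha_t^{(j)}(p)^k$ and $A_\infty$ collects the archimedean factors. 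Averaging over $\mathfrak{F}_\sigma(\mu)$ and using $\log C(\pi_j)\sim\log C(\mathfrak{F}_\sigma(\mu))$, the first term contributes $\widehat{\Phi}(0)$ in the limit, while the archimedean term produces the constant density $1$ from the gamma factors plus, in the $n=2$ situation only, an extra $\pm\tfrac12\delta_0$ reflecting the sign of the functional equation.

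The finite-place contribution is handled by expanding $a_{\pi_j}(p^k)$ via Newton's identities into a polynomial in the Hecke eigenvalues $\lambda_j(\beta_1),\dots,\lambda_j(\beta_{n-1})$ supported at $p$, thereby reducing matters to averages $|\mathfrak{F}_\sigma(\mu)|^{-1}\sum_{j\in\mathfrak{F}_\sigma(\mu)}\lambda_j(\beta)$ with $\beta\in H(\Q)\cap \M(n^2-1,p^{-\kappa}\Z)$ and $\beta\notin\Gamma$. By the equivariant part of Theorem \ref{thm:pgl} (whose main term $\delta_\beta$ vanishes for such $\beta$) each of these averages is $O(p^{n^4\kappa}\mu^{-\eta_n})$ for an explicit $\eta_n>0$. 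Choosing $\mathrm{supp}(\widehat{\Phi})\subset(-\delta,\delta)$ with $\delta$ small enough, only the primes $p^k\le C(\mathfrak{F}_\sigma(\mu))^{\delta}\asymp\mu^{n\delta}$ contribute, and the constraint $n^5\delta<\eta_n$ guarantees that the whole finite-place contribution vanishes as $\mu\to\infty$.

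For $n\ge 3$ the archimedean $L$-factors are identical within $\mathfrak{F}_\sigma(\mu)$, so $A_\infty$ contributes only $\widehat{\Phi}(0)$ and the density is $W(\mathrm{U})=1$. For $n=2$ every $\pi_j$ is self-dual, and the archimedean Langlands parameter of $\pi_{j,\infty}$ is determined by the $\PO(2)$-type $\sigma$: trivial $\sigma$ selects even Maass forms, $\sigma=\det$ selects odd Maass forms, while the two-dimensional type mixes principal-series components of both parities. The root number $\varepsilon(\pi_j)$ is then constant on the first two subfamilies, yielding $W(\mathrm{SO}_\mathrm{even})$ and $W(\mathrm{SO}_\mathrm{odd})$ respectively (the additional $\delta_0$ in the odd case comes from the forced central zero), and splits evenly on the third, producing the arithmetic mean $W(\mathrm{O})$. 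After this case distinction the conclusion follows exactly as in \cite{ST}.

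The principal obstacle is the tension between the support radius of $\widehat{\Phi}$ and the polynomial prefactor $p^{n^4\kappa}$ in the remainder of Theorem \ref{thm:pgl}: the growth of Hecke operators in the prime direction forces $\mathrm{supp}(\widehat{\Phi})$ to be taken strictly smaller than in the classical $\GL(2)$ density results, and any enlargement of the admissible range would require sharper bounds on the density of Arthur parameters in the spirit of \cite{SST}. A secondary technical point, relevant only for $n=2$, is the explicit verification that the three irreducible $\PO(2)$-types parametrise the three orthogonal symmetry subfamilies as stated, which amounts to an unwinding of the archimedean Langlands correspondence for $\PGL(2,\R)$ and a short analysis of the local gamma factor at infinity.
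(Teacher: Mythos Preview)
Your approach is essentially the same as the paper's: both defer to the Shin--Templier/Matz--Templier framework, with Theorem \ref{thm:pgl} supplying the equivariant Hecke asymptotics needed to kill the finite-place averages. The paper's proof is more telegraphic, citing \cite{MT} and \cite{AM} directly for one-dimensional $\sigma$, invoking \cite{Badulescu} to ensure the family is essentially cuspidal, and then pointing to \cite[Section 12]{ST} once Theorem \ref{thm:pgl} has established that $\mathfrak{F}_\sigma(\mu)$ has rank zero in the sense of \cite{SST}.

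Two points deserve comment. First, for $n\ge 3$ your justification of the unitary symmetry type is not quite right: the archimedean $L$-factors are certainly \emph{not} identical within $\mathfrak{F}_\sigma(\mu)$, since $\pi_{j,\infty}$ varies with the Laplace eigenvalue. The actual reason one lands on $W(\mathrm{U})$ is that the family is essentially cuspidal (by the global Jacquet--Langlands correspondence of \cite{Badulescu}) and has rank zero, hence the self-dual locus is negligible; the paper makes this explicit, and you should too. Second, for $n=2$ with a two-dimensional $K$-type the paper does not argue via an equidistribution of root numbers as you do, but instead reduces to the one-dimensional $K$-types by analysing which irreducible unitary representations of $\PGL(2,\R)$ contain a given $\sigma$. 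This is a cleaner route than asserting that the root number ``splits evenly'', a claim that would itself require a density argument.
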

\begin{proof}
If $\sigma$ is $1$-dimensional,  the statement is essentially contained in \cite[Theorem 2.1]{MT} for general $n$, and for $n=2$ in \cite[Theorem 1.4]{AM}, because our family can be obtained from the families considered there by restricting finitely many local components of automorphic representations. As for the case $n=2$, the statement can be reduced to the case $\dim \sigma=1$ by considering $K$-types of  irreducible unitary representations of $G$. 
Hence, we are left with the  task of proving the assertion for $n>2$ and $\dim \sigma>1$.
Now,  \cite{Badulescu} implies that  for $j>0$ the automorphic representation of $\PGL(n,\Q)$ associated to a Hecke-Maass form $\phi_j$  is cuspidal, so that  $\mathfrak{F}_\sigma(\mu)$ is essentially cuspidal, compare also  \cite{SST}.
In addition, Theorem \ref{thm:pgl} implies that $\mathfrak{F}_\sigma(\mu)$ has rank zero in the sense of \cite{SST}. Consequently, the assertion follows with the same arguments that were used  in \cite[Section 12]{ST}, see also \cite[Section 2]{MT}.
\end{proof}

For the non-equivariant family $\mathfrak{F}(\mu)$, we can also define the average analytic conductor $C(\mathfrak{F}(\mu))$ and the average $1$-level density $\mathcal{D}_1(\mathfrak{F}(\mu);\Phi)$ as above.
The multiplicity of a $K$-type $\sigma$ of a parabolically induced representation of $G$ is bounded by the dimension of $\sigma$, cf. \cite[p. 293]{Mueller}, because for $G=\PGL(n,\R)$, any Levi subgroup of a cuspidal parabolic subgroup is isomorphic to a product of copies of $\GL(1,\R)$ and $\GL(2,\R)$ modulo the center.
Therefore, for a single $\pi_j$, the number of Hecke-Maass forms $\phi\in \pi_j$ with  $\mu_\phi\leq \mu$ increases by the order $\mu^3$ for $n>2$ or $\mu$ for $n=2$, where $\mu_\phi>0$ is defined by $\Delta\phi=\mu_\phi^2 \phi$.  
Consequently, they can be neglected in the total growth of $\mathfrak{F}(\mu)$, and we obtain $C(\mathfrak{F}(\mu)) \asymp \mu^n$.
Thus, by the same argument as in the proof of Theorem \ref{thm:lowlyingzeros} for $n\geq 3$, we obtain
\bqn
\lim_{\mu\to \infty} \mathcal{D}_1(\mathfrak{F}(\mu);\Phi) = \begin{cases}\int_{-\infty}^{\infty} \Phi(x) \, W(\mathrm{U})(x) \, \d x & \text{if } n\geq 3, \\ \int_{-\infty}^{\infty} \Phi(x) \, W(\mathrm{O})(x) \, \d x &\text {if } n=2.\end{cases} 
\eqn

\section{Spectral asymptotics for kernels of Hecke operators}\label{sec:2}

In this section, we begin our analysis by deriving spectral asymptotics for kernels of Hecke operators by means of Fourier integral operators. 

\subsection{Non-equivariant spectral asymptotics}\label{sec:2.1}
Let $M$ be  a closed  Riemannian manifold $M$ of dimension $d$  and $P_0$ an elliptic classical pseudodifferential operator on $M$ of degree $m$, which is assumed to be  positive and symmetric. Denote its  unique self-adjoint extension by  $P$, and let $\mklm{\phi_j}_{j\geq 0}$ be an orthonormal basis of $\L^2(M)$ consisting of eigenfunctions of $P$ with eigenvalues $\mklm{\lambda_j}_{j \geq 0}$ repeated according to their multiplicity. 
Let $p(x,\xi)$ be the principal symbol of $P_0$, which is strictly positive and  homogeneous in $\xi$ of degree $m$ as a function  on $T^\ast M\setminus\mklm{0}$, that is,  the cotangent bundle of $M$ without the zero section.  Here and in what  follows  $(x,\xi)$ denotes an element in $T^*Y \simeq Y \times \R^d$ with respect to the canonical trivialization of the cotangent  bundle over a chart domain $Y\subset M$. Consider further the $m$-th root $Q:=\sqrt[m]{P}$ of $P$ given by the spectral theorem. It is well  known that $Q$ is a classical pseudodifferential operator of order $1$ with principal symbol $q(x,\xi):=\sqrt[m]{p(x,\xi)}$ and the first Sobolev space as domain. Again, $Q$ has discrete spectrum, and its eigenvalues  are given by $\mu_j:=\sqrt[m]{\lambda_j}$.  The spectral properties of $P$ can  be described by  studying the \emph{spectral function} of $Q$, which in terms of the basis $\mklm{\phi_j}$ is given by
\bq
\label{eq:specfunct} 
e(x,y,\mu):=\sum_{\mu_j\leq \mu} \phi_j(x) \overline{\phi_j(y)},
\eq
and belongs to $\Cinft(M \times M)$ as a function of $x$ and $y$ for any $\mu \in \R$. Let $s_\mu$ be the spectral projection onto the sum of eigenspaces of $Q$ with eigenvalues in the interval  $(\mu, \mu+1]$, and denote its Schwartz kernel by 
$$
s_\mu(x,y):=e(x,y,\mu+1) - e(x,y,\mu).
$$
 To obtain an asymptotic description of the spectral function of $Q$, let $\rho \in \S(\R,\R_+)$ be such that $\hat \rho(0)=1$ and $\supp \hat \rho\in (-\delta/2,\delta/2)$ for an arbitrarily small  $\delta>0$, and define the {approximate spectral projection operator} 
\bqn 
\widetilde s_\mu u := \sum_{j=0}^\infty \rho(\mu-\mu_j) E_{j} u, \qquad u \in \L^2(M),
\eqn
where $E_j$ denotes the orthogonal projection onto the subspace spanned by $\phi_j$. Clearly, 
\bq
\label{eq:29.5.2017}
K_{\widetilde s_\mu}(x,y):=\sum_{j=0}^\infty \rho(\mu-\mu_j) \phi_j(x) \overline{\phi_j(y)}\in \Cinft(M\times M)
\eq
 constitutes the Schwartz kernel of $\widetilde s_\mu$. Describing $\tilde s_\mu$ as a  Fourier integral operator 
  one obtains the following 
 
\begin{proposition}
\label{prop:31.05.2017}
{\rm \cite[Proposition 3.1]{RW}}
Suppose  that the cospheres $S^\ast_xM:=\mklm{(x,\xi) \in T^\ast M\mid  p(x,\xi)=1}$ are strictly convex.\footnote{This condition holds, for example, if $P_0=\Delta$ equals the Beltrami--Laplace operator, since then $p(x,\xi)=\|\xi\|^2_x$.}
Then,  as $\mu \to +\infty$, for any  fixed $x, y \in M$,  and $\tilde N=0,1,2,3,\dots$ one has the expansion\footnote{For the case $x=y$ see also \cite[Proposition 2.1]{duistermaat-guillemin75}.}
\begin{align*}
\begin{split}
K_{\widetilde s_\mu}(x,y)&=   \mu^{d-1-{\frac{\delta_{x,y}}{2}}} \left [ \sum_{r=0}^{\tilde N-1} \Lcal_r(x,y)\, \mu^{-r} + \cR_{\tilde N}(x,y,\mu) \right ]
\end{split}
\end{align*}
up to terms of order $O(\mu^{-\infty})$, where  
\bqn 
\delta_{ x,y}:=\begin{cases} 0, & y=x, \\ d-1, & y \not=x.  \end{cases}
\eqn
The coefficients in the expansion and  the remainder $\cR_{\tilde N}(x,y,\mu)=O_{x,y}(\mu^{-\tilde N})$ term can be computed explicitly;  if $y=x$, they  are uniformly bounded in $x$ and $y$,  while  if $y \not=x$, they satisfy the bounds
 \begin{align*}
 \label{eq:31.5.2017}
\begin{split}
\Lcal_r(x,y) &\ll \, {\dist (x,y)}^{-(d-1)/2-r},  \qquad \cR_{\tilde N}(x,y,\mu) \ll \, \dist (x, y)^{-(d-1)/2-\tilde N} \, \mu^{-\tilde N},
\end{split}
\end{align*}
where $\dist(x,y)$ denotes the geodesic distance between two points  belonging to the same connected component, while  $\dist(x,y):=\infty$ for points in different components. On the other hand, $K_{\widetilde s_\mu}(x,y)$ is rapidly decreasing as $\mu\to -\infty$. 
\end{proposition}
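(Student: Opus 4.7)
The plan is to apply the microlocal machinery of Fourier integral operators together with the method of stationary phase, essentially along the lines of the proofs of H\"ormander and Duistermaat--Guillemin for the refined Weyl law. By the spectral theorem and Fourier inversion,
\[
\widetilde{s}_\mu \;=\; \rho(\mu-Q) \;=\; \frac{1}{2\pi}\int_\R \hat\rho(t)\, e^{it\mu}\, e^{-itQ}\, dt,
\]
so that only the restriction of the half-wave propagator $e^{-itQ}$ to the small interval $\supp \hat\rho\subset(-\delta/2,\delta/2)$ contributes to $\widetilde{s}_\mu$. For such small times the standard H\"ormander--Duistermaat construction realises $e^{-itQ}$, in local coordinates, as an FIO of order zero whose Schwartz kernel has the form
\[
K_{e^{-itQ}}(x,y) \;=\; (2\pi)^{-d}\!\int_{\R^d} e^{i\varphi(t,x,y,\xi)}\, a(t,x,y,\xi)\, d\xi
\]
modulo a smoothing remainder, where $\varphi$ solves the eikonal equation $\partial_t\varphi+q(x,\partial_x\varphi)=0$ with initial datum $\varphi(0,x,y,\xi)=\langle x-y,\xi\rangle$, and the amplitude $a\sim\sum_{j\geq 0} a_j$ is classical with $a_0(0,x,x,\xi)=1$.

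Substituting this representation into the functional-calculus formula above yields the oscillatory integral
\[
K_{\widetilde{s}_\mu}(x,y) \;=\; (2\pi)^{-d-1}\!\int_\R\!\int_{\R^d} \hat\rho(t)\, e^{i(t\mu+\varphi(t,x,y,\xi))}\, a(t,x,y,\xi)\, d\xi\, dt \;+\; O(\mu^{-\infty}).
\]
First I would pass to polar coordinates $\xi=r\omega$ with $r>0$ and $\omega$ on the unit cosphere, exploit the homogeneity $\varphi(t,x,y,r\omega)=r\,\tilde\varphi(t,x,y,\omega)$ of degree one in $\xi$, and Taylor expand $\tilde\varphi$ in $t$ about $t=0$. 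Integration against $\hat\rho$ in the $t$-variable then localises the integrand around the cosphere $r\,q(x,\omega)=\mu$: the ensuing $r$-integral is effectively evaluated at $r=\mu/q(x,\omega)$, producing the factor $\mu^{d-1}$ from the Jacobian $r^{d-1}$. After this reduction, what remains is an integral over the unit cosphere with integrand $e^{i\mu\psi(x,y,\omega)}$ times a classical amplitude in $\mu^{-1}$, where $\psi(x,y,\omega):=\tilde\varphi(0,x,y,\omega)$.

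If $x=y$ the phase $\psi(x,x,\omega)\equiv 0$, the angular integral reduces to a smooth density over the cosphere, and one obtains the expansion with leading order $\mu^{d-1}$ and coefficients $\mathcal L_r(x,x)$ uniformly bounded in $x$. For $x\neq y$, the strict convexity hypothesis on $S^*_xM$ ensures that $\omega\mapsto\psi(x,y,\omega)$ has isolated non-degenerate critical points on the cosphere, with Hessian of modulus of order $\dist(x,y)^{d-1}$. Applying stationary phase in the $(d-1)$ angular directions produces the extra factor $\mu^{-(d-1)/2}$ and the leading bound $\mathcal L_0(x,y)\ll \dist(x,y)^{-(d-1)/2}$; each subsequent term in the classical expansion of $a$ contributes one further factor of $\mu^{-1}\dist(x,y)^{-1}$, giving the stated bounds on the higher $\mathcal L_r$ and on the remainder $\mathcal R_{\tilde N}$. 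Rapid decay as $\mu\to -\infty$ follows from iterated integration by parts in $t$, using that $\partial_t(t\mu+\varphi)=\mu-q(x,\partial_x\varphi)$ is bounded away from zero on $\supp\hat\rho$ when $\mu<0$ and that $\hat\rho$ is Schwartz.

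The main technical obstacle I anticipate is ensuring the uniformity of the stationary phase expansion with respect to the parameter $\dist(x,y)$, which requires a careful patching of the local FIO representations into a globally valid expansion on $M\times M$ via a geodesic partition of unity; here the essential geometric input is that for $|t|<\delta/2$ the wavefront set of $K_{e^{-itQ}}$ is supported on the graph of the Hamiltonian flow of $q$, so that only pairs $(x,y)$ joined by a short bicharacteristic contribute to leading order. Points lying in different connected components of $M$ receive only smoothing contributions and are therefore $O(\mu^{-\infty})$, consistent with the convention $\dist(x,y)=\infty$.
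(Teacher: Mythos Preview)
The paper does not supply its own proof of this proposition: it is quoted verbatim from \cite[Proposition 3.1]{RW} and simply marked with a \qed. Your sketch follows the standard H\"ormander/Duistermaat--Guillemin route and is essentially correct; judging from the follow-up formula $K_{\widetilde s_\mu}(x,x)= \frac{\mu^d}{(2\pi)^{d+1}} \int_\R\int_\R e^{i\mu(t-Rt)} I(\mu,R,t,x)\, dR\, dt$ that the paper extracts from \cite[(3.5)]{RW}, this is also the approach taken in the source. The only organisational difference is that, rather than ``integrating out $t$ first to localise to the cosphere'', the cited argument rescales $r=\mu R$ and applies stationary phase jointly in $(t,R)$ with phase $t-Rt$ (critical point $(1,0)$) before handling the angular integral; your description of that step is slightly informal but leads to the same expansion.
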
  
\qed

 To describe the leading term more explicitly, note that by \cite[(3.5)]{RW}
\bqn 
K_{\widetilde s_\mu}(x,x)= \frac{\mu^d}{(2\pi)^{d+1}} \int_\R\int_\R e^{i\mu(t-Rt)} I(\mu,R,t,x) \d R \d t
\eqn 
up to terms of order $O(\mu^{-\infty})$, where $I(\mu,R,t,x)$ is a  compactly supported smooth function in $(R,t)$ satisfying $I(\mu,1,0,x)=\vol S^\ast_x(M)$. If we now apply the stationary phase theorem to the above oscillatory integral with phase function $t-Rt$ we obtain
\bq
\label{eq:28.3.2018}
K_{\widetilde s_\mu}(x,x)= \frac{\mu^{d-1}}{(2\pi)^{d}} \vol S^\ast_x(M) + O(\mu^{d-2})
\eq
as $\mu \to + \infty$, the only critical point being $(R,t)=(1,0)$. This could also be read off directly from \cite[(2.2)]{duistermaat-guillemin75}.
 
 \subsection{Equivariant spectral asymptotics}  Keeping the notation as above, assume now  that $M$ carries an  isometric action of a compact  Lie group $K$, and consider  the right regular representation $\pi$ of $K$ on $\L^2(M)$ with corresponding 
 Peter-Weyl decomposition
\bq
\label{eq:PW}
\L^2(M) = \bigoplus_{\sigma \in \widehat K} \L^2_\sigma(M), \qquad \L^2_\sigma(M):= \Pi_\sigma \L^2(M),
\eq
where $\widehat K$ denotes the unitary dual of $K$, which we identify with the set of characters of $K$, and 
\bqn 
\Pi_\sigma:= d_\sigma \int_K \overline{\sigma(k)} \pi(k) \d k
\eqn
 the orthogonal projector onto the $\sigma$-isotypic component $\L^2_\sigma(M)$, $dk$ being Haar measure and $d_\sigma$ the dimension of an irreducible representation $(V_\sigma,\pi_\sigma)$ of $K$ in the class $\sigma \in \widehat K$. Note that  $\L^2_\sigma(M)\simeq(L^2(M)\otimes \sigma^\vee)^K$, where $(L^2(M)\otimes \sigma^\vee)^K=(L^2(M)\otimes V_\sigma)^K$ consists of   $L^2$-functions $\phi:M\rightarrow V_\sigma$  that are $K$-equivariant in the sense that $\phi(m \cdot k)=\pi_\sigma(k)^{-1} \phi(m)$. The components of $\phi$  as $L^2$-functions from $M$ to $\C$ correspond then to elements  in $\L^2(M)_\sigma$.  Further, suppose that $P$ commutes with $\pi$, and that the orthonormal basis $\mklm{\phi_j}_{j \geq 0}$ is compatible with the decomposition \eqref{eq:PW} in the sense that each $\phi_j$ lies in some $\L^2_\sigma(M)$. Then every eigenspace of $P$ is invariant under $\pi$, and decomposes into irreducible $K$-modules spanned by  eigenfunctions. The fine structure of the spectrum of $P$ is described by the spectral function of the operator $Q_\sigma:=\Pi_\sigma \circ Q \circ \Pi_\sigma= \Pi_\sigma \circ Q= Q \circ \Pi_\sigma$, which is also called the \emph{reduced spectral function}, and given by 
 \bq
 \label{eq:02.02.2017} 
 e_\sigma(x,y,\mu):=\sum_{\mu_j \leq \mu, \, \phi_j \in \L^2_\sigma(M)} \phi_j(x) \overline{ \phi_j(y)}. 
 \eq
To study it, one considers the composition $s_\mu \circ \Pi_\sigma$, or rather $
\tilde s_\mu \circ \Pi_\sigma$, whose kernel has the spectral expansion
\bq
\label{eq:24.11.2016b}
K_{\widetilde s_\mu \circ \Pi_\sigma}(x,y)=\sum_{j \geq 0,  \phi_j \in \L^2_\sigma(M)} \rho(\mu-\mu_j) \phi_j(x) \overline{\phi_j(y)}.
\eq
Write $\O_x:=x \cdot K$ for the $K$-orbit through $x$. 
Similarly to  Proposition \ref{prop:31.05.2017}, using Fourier integral operator methods one  proves  the following\footnote{Note that the additional assumption made in \cite[Section 3.2]{RW} and  \cite{ramacher16} that $K$ acts effectively on $M$ is unnecessary. } 
  
\begin{proposition}{\rm \cite[Proposition 3.3]{RW}}
\label{prop:30.01.2017}
Suppose that $K$ acts on $M$ with orbits of the same dimension $\kappa\leq d-1$ and  that the cospheres $S^\ast_xM:=\mklm{(x,\xi) \in T^\ast M\mid  p(x,\xi)=1}$ are strictly convex. Then, for any  fixed $x, y \in M$, $\sigma \in \widehat K$,  and $\tilde N=0,1,2,3,\dots$ one has the expansion
\begin{align*}
\begin{split}
K_{\widetilde s_\mu \circ \Pi_\sigma}(x,y)&=    \mu^{d-{\frac{\epsilon_{x,y}}{2}}-1} d_\sigma  \left [ \sum_{r=0}^{\tilde N-1} \Lcal_r^\sigma (x,y)\, \mu^{-r} + \cR^\sigma_{\tilde N}(x,y,\mu) \right ]
\end{split}
\end{align*}
up to terms of order $O(\mu^{-\infty})$ as $\mu \to +\infty$, where  
\bqn 
\varepsilon_{ x,y}:=\begin{cases} 2 \kappa, & y \in \O_x, \\ d-1+\kappa, & y \notin \O_x.  \end{cases}
\eqn
The coefficients in the expansion and  the remainder term can be computed explicitly;  if $y \in \O_x$, they  satisfy the bounds
 \begin{align*}
\begin{split}
\Lcal^\sigma_r(x,y ) &\ll \, \sup_{u \leq 2r}\norm{D^u \sigma}_\infty, \qquad
 \cR^\sigma_{\tilde N}(x,y,\mu) \ll \, \sup_{u \leq 2\tilde N+ \lfloor \frac \kappa 2+1\rfloor}\norm{D^u \sigma}_\infty \, \mu^{-\tilde N},
\end{split}
\end{align*}
uniformly in $x$ and $y$, where $D^u$ denote differential operators on $K$ of order $u$, and  if $y \notin \O_x$, the bounds
 \begin{align*}
\begin{split}
\Lcal^\sigma_r(x,y ) &\ll \, \sup_{u \leq 2r}\norm{D^u \sigma}_\infty \cdot {\dist (x, \O_y)}^{-\frac{d-\kappa-1}2-r}, \\ 
 \cR^\sigma_{\tilde N}(x,y,\mu) &\ll \, \sup_{u \leq 2\tilde N + \lfloor\frac{\kappa}2+1\rfloor}\norm{D^u \sigma}_\infty \cdot \dist (x, \O_y)^{-\frac{d-\kappa-1}2-\tilde N} \, \mu^{-\tilde N},
\end{split}
\end{align*}
where $\dist (x, \O_y):=\min\mklm{\dist(x,z)\mid z \in \O_y}$. On the other hand, $K_{\widetilde s_\mu\circ \Pi_\sigma}(x,y)$ is rapidly decreasing as $\mu\to -\infty$.
\end{proposition}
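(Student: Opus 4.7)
The plan is to represent $\widetilde s_\mu \circ \Pi_\sigma$ as a Fourier integral operator obtained by composing the FIO representation of $\widetilde s_\mu$ that underlies Proposition~\ref{prop:31.05.2017} with the group-theoretic averaging that defines $\Pi_\sigma$, and then to apply the stationary phase principle to the resulting oscillatory integral. Concretely, since $\hat\rho$ has compact support and $\widetilde s_\mu u = \frac{1}{2\pi}\int_\R \hat\rho(t)\,e^{i\mu t}\,e^{-itQ}u\,\d t$, while $\Pi_\sigma = d_\sigma \int_K \overline{\sigma(k)}\,\pi(k)\,\d k$, the kernel of the composition equals
\bqn
K_{\widetilde s_\mu\circ\Pi_\sigma}(x,y) = \frac{d_\sigma}{2\pi}\int_K\int_\R \hat\rho(t)\,\overline{\sigma(k)}\,e^{i\mu t}\,K_{e^{-itQ}}(x, y\cdot k)\,\d t\,\d k .
\eqn
Substituting the standard WKB representation of $K_{e^{-itQ}}$ valid for $|t|<\delta/2$, with phase function $\varphi(x,z,\xi) - t\,q(z,\xi)$ and a classical symbol amplitude, and rescaling $\xi\mapsto \mu\xi$, one obtains an oscillatory integral over $(t,k,\xi)$ with large parameter $\mu$ and phase
\bqn
\Psi(t,k,\xi;x,y) = \varphi(x, y\cdot k, \xi) - t\,q(y\cdot k,\xi) + t .
\eqn

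The core of the proof is the stationary phase analysis of this integral, which I expect to be the main obstacle. The critical point conditions in $(t,\xi)$ are those already encountered in the non-equivariant setting of Proposition~\ref{prop:31.05.2017}; the new condition from $\partial_k \Psi = 0$, combined with $\partial_t \Psi = \partial_\xi\Psi = 0$, forces $y\cdot k = x$, i.e. $k\in I(y,x):=\{k\in K\mid y\cdot k = x\}$. When $y\in\O_x$, the set $I(y,x)$ is a coset of the stabilizer, hence a smooth submanifold of $K$ of dimension $\kappa$; one must then split the $k$-integration into tangential and normal directions relative to $I(y,x)$ and apply a parameter-dependent stationary phase (of the type already invoked for the non-equivariant case in \cite{RW}) in the normal directions together with the $(t,\xi)$-variables. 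Integrating $\overline{\sigma(k)}$ along the tangential directions against Haar measure produces the prefactor $d_\sigma$, while the transversal quadratic form accounts for the loss of $\frac{\dim K - \kappa}{2}$ further powers of $\mu$ compared to the non-equivariant case, yielding precisely the exponent $d-1-\varepsilon_{x,y}/2$ with $\varepsilon_{x,y}=2\kappa$. When $y\notin\O_x$ the set $I(y,x)$ is empty, so the phase has no critical $k$ in the support of the integrand; iterated integration by parts in $k$, using a suitable vector field transverse to level sets of $k\mapsto \dist(x, y\cdot k)$, produces the negative powers of $\dist(x,\O_y)$ in the stated bounds, with the loss of derivatives falling on $\overline{\sigma(k)}$ and accounting for the $\norm{D^u\sigma}_\infty$-type estimates.

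The explicit form of the coefficients $\Lcal_r^\sigma(x,y)$ and the remainder $\cR_{\tilde N}^\sigma(x,y,\mu)$ follows by tracking the standard asymptotic expansion of the stationary phase formula; each successive order of expansion introduces at most two differentiations of the amplitude, which carries $\overline{\sigma(k)}$ as a factor, giving the bounds by $\sup_{u\le 2r}\norm{D^u \sigma}_\infty$ and by $\sup_{u\le 2\tilde N + \lfloor\kappa/2+1\rfloor}\norm{D^u \sigma}_\infty$ once the dimensional shift from the normal stationary phase in $k$ is taken into account. Finally, the rapid decay as $\mu\to -\infty$ follows from the observation that at critical points $\partial_t \Psi = 1 - q(y\cdot k,\xi) = 0$ forces the positivity $q=1$, so after dividing by $\mu$ the phase has definite sign for $\mu<0$; repeated integration by parts in $t$ then yields $O(\mu^{-\infty})$ bounds in that regime.
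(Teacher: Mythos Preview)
The paper does not prove this proposition; it is imported from \cite[Proposition~3.3]{RW} and closed with a bare \qed. Your outline --- write $K_{\widetilde s_\mu\circ\Pi_\sigma}(x,y)=d_\sigma\int_K\overline{\sigma(k)}\,K_{\widetilde s_\mu}(x,y\cdot k)\,\d k$, insert the half-wave parametrix, and apply stationary phase in $(t,\xi,k)$ --- is exactly the method of that reference, so at the level of strategy there is nothing to compare.

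Two points of bookkeeping in your sketch are off, though, and they matter for getting the stated exponent. First, for $y\in\O_x$ the set $I(y,x)$ is a coset of the isotropy group $K_x$, which has dimension $\dim K-\kappa$, not $\kappa$. Second, and more importantly, the condition $\partial_k\Psi=0$ does \emph{not} merely force $y\cdot k=x$: at $t=0$ and $y\cdot k=x$ one computes $\partial_X\Psi=-\xi(\widetilde X_x)$ for $X\in\k$ (with $\widetilde X$ the fundamental vector field), so criticality in $k$ additionally constrains $\xi$ to the annihilator of $T_x\O_x$, i.e.\ to the fibre $\Omega_x$ of the zero level of the moment map. The critical manifold is therefore $\{t=0\}\times I(y,x)\times(\Omega_x\cap S^*_xM)$, of codimension $2\kappa+2$ in the $(1+\dim K+d)$-dimensional integration domain, and a clean stationary phase then yields $\mu^{-(\kappa+1)}$, producing the exponent $d-\kappa-1$; this is also why the leading coefficient in \eqref{eq:12.4.2018} involves $\vol[(\Omega\cap S^*_xM)/K]$. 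Your count of ``loss of $\tfrac{\dim K-\kappa}{2}$ further powers'' would instead give $\mu^{d-1-(\dim K-\kappa)/2}$, which is wrong in general. With this correction the remainder of your outline --- the $\norm{D^u\sigma}_\infty$ bounds from differentiating $\overline{\sigma(k)}$, the integration by parts for $y\notin\O_x$, and the rapid decay for $\mu\to-\infty$ --- is accurate.
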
 
\qed
 
As far as the leading term is concerned, by \cite[Proposition 4.1]{ramacher16} one has as $\mu \to +\infty$
\bq
\label{eq:12.4.2018}
K_{\widetilde s_\mu\circ \Pi_\sigma}(x,x)= \frac{\mu^{d-\kappa-1}}{(2\pi)^{d-\kappa}} d_\sigma [\pi_{\sigma|K_x}:\1] \vol [(\Omega \cap S^\ast_x(M))/ K] + O(\mu^{d-\kappa- 2}),
\eq
where $[\pi_{\sigma|K_x}:\1]$ is a Frobenius factor that denotes the multiplicity of the trivial representation in the restriction of $\pi_\sigma$ to the stabilizer $K_x$ of $x$,  and $\Omega$ is the zero level of the momentum map corresponding to  the Hamiltonian $K$-action on $T^\ast M$. 
 
\subsection{Spectral asymptotics for Hecke operators}
\label{sec:specasymphecke} In what follows, we shall apply the previous considerations to derive asymptotics for kernels of Hecke operators in the eigenvalue aspect. To introduce the setting, let $G$ be a $d$-dimensional real semisimple Lie group  with finite center  and Lie algebra $\g$.  Denote by  $\langle X,Y\rangle := \tr \, (\ad X\circ \ad Y)$ the Cartan-Killing form   on $\g$ and by $\theta$   a Cartan involution  of $\g$. Let 
\bq
\label{eq:cartan}
\g = \k\oplus\p
\eq
be  the Cartan decomposition of $\g$ into the eigenspaces of  $\theta$, corresponding to the eigenvalues  $+1$ and $-1$ , respectively, and denote the  maximal compact subgroup of $G$ with Lie algebra $\k$ by $K$. Put $\langle X,Y\rangle _\theta:=-\langle X,\theta Y\rangle $. Then $\langle \cdot,\cdot \rangle _\theta$ defines a left-invariant Riemannian metric on $G$ with corresponding distance function $\dist_G$. 

Now, let   $\Gamma_1$, $\Gamma_2,\dots,\Gamma_h$ be discrete cocompact subgroups of $G$ which are mutually commensurable.   The set $\Gamma: =\cap_{l=1}^h \Gamma_l$ is a subgroup of  finite index and the disjoint union\footnote{In this paper, the symbol $\coprod$ will denote the disjoint union of possibly intersecting sets. If $G$ is not compact and one identifies the sets $\Gamma_j \bsl G$ with fundamental domains in $G$, the latter can be chosen such that they have no intersections, since they are bounded.}
\[
M:=\Gamma_1\bsl G \coprod \Gamma_2\bsl G \coprod \cdots \coprod \Gamma_h\bsl G \simeq \{   (g,l) \mid 1\leq l\leq h, \;\; g\in \Gamma_l \bsl G     \}
\]
is a closed  manifold, where each  point in $x \in M$ can be expressed as a pair $(g,l)\equiv \Gamma_lg$ of a representative $g\in G$ and the subscript of $\Gamma_l$. The left-invariant metric on $G$ induces a Riemanniann metric  and a  distance function   $\dist$ on each of the connected components $\Gamma_l\bsl G$ of $M$   according to 
\bqn 
\dist(\Gamma_l g, \Gamma_l h):=\inf_{\gamma \in \Gamma_l} \dist_G(g,\gamma h),
\eqn
 while for $l\not=j$ one sets $\dist((g,l),(h,j)):=\infty$. In order to introduce  Hecke operators on $M$ we consider the commensurator of $\Gamma$
\[
C(\Gamma):=\{  g\in G \mid \text{$\Gamma$ is commensurable with $g^{-1}\Gamma g$} \}.
\]
Since $\Gamma\bsl \Gamma \alpha \Gamma\simeq (\Gamma \cap \alpha \Gamma \alpha^{-1}) \bsl  \alpha \Gamma \alpha^{-1}$, for each element $\alpha\in C(\Gamma)$  one has\footnote{In this paper, the symbol $\sqcup$ will denote union of disjoint sets. For disjoint sets, the operations $\coprod$ and $\bigsqcup$ coincide.} 
\bq
\label{eq:06.04.2018}
\Gamma_j\alpha\Gamma_l=\bigsqcup_{u=1}^{U_{\alpha, j,l}}\Gamma_j\beta_{u}, \qquad U_{\alpha, j,l} \in \N_\ast, \, \beta_u \in \Gamma_j \alpha \Gamma_l, 
\eq
so that it is natural to define a linear mapping $T_{\Gamma_j\alpha\Gamma_l}:\L^2(\Gamma_j\bsl G)\to \L^2(\Gamma_l\bsl G)$ by the expression 
\[
(T_{\Gamma_j\alpha\Gamma_l}f)(g,l):=\sum_{u=1}^{U_{\alpha, j,l}}f(\beta_u g,j)=:\sum_{\beta\in \Gamma_j\bsl \Gamma_j\alpha\Gamma_l}f(\beta g,j).
\]
\begin{rem}[Notation]
\label{rem:4.5.19}
According to general convention,  $\beta \equiv \Gamma_j \beta$ (resp. $\beta_u\equiv \Gamma_j \beta_u $) denotes both a right coset as well as a suitable representative in $\Gamma_j\alpha\Gamma_l\subset G$, and the products $\beta g$ and $\beta_u g$ are taken in $G$, compare \cite[Section 2.8]{Miyake}.
\end{rem}
 Note that the so-called \emph{Hecke points} $(\beta_u g,j)$ do depend on the representative $g$, while the sums defining $T_{\Gamma_j\alpha\Gamma_l}$ do not depend on the representatives $g$ and $\beta_u$. In fact, for a different representative $g_1$, the Hecke points $(\beta_ug_1,j)$ are given by a permutation of the points $(\beta_ug ,j)$. We now generalize this definition, and introduce for each tuple  $\alpha\equiv (\alpha_{j,l,m})_{1\leq j,l\leq h, \; 1\leq m \leq c_{j,l}}$ with  $\alpha_{j,l,m}\in C(\Gamma)$ and  $c_{j,l}\in\N$ a \emph{Hecke operator} $T_\alpha:=(\sum_{m=1}^{c_{j,l}}T_{\Gamma_j\alpha_{j,l,m}\Gamma_l})_{1\leq j,l\leq h}$ on 
\bq \label{eq:l2}
\L^2(M):=\left\{  \varphi:G\times \{1,2,\dots,h\}\to\C \mid \begin{array}{l} \text{$\varphi$ is measurable,} \\ \text{$\varphi(\gamma g,j)=\varphi(g,j)$ holds for any $\gamma\in \Gamma_j$, $g\in G$,} \\ \text{$\sum_{j=1}^h \int_{\Gamma_j\bsl G} |\varphi(g,j)|^2dg<\infty$}  \end{array} \right\}
\eq
 by setting\footnote{Note that $c_{j,l}=0$ corresponds to the trivial mapping from $\L^2(\Gamma_j\bsl G)$ to $\L^2(\Gamma_l\bsl G)$. Also, as subsets in $G$ the $\Gamma_j\alpha_{j,l,m}\Gamma_l$ are not disjoint in general.} 
\[
(T_\alpha f)(g,l):=\sum_{j=1}^h \sum_{m=1}^{c_{j,l}} (T_{\Gamma_j\alpha_{j,l,m}\Gamma_l}f)(g,l)=\sum_{j=1}^h \sum_{m=1}^{c_{j,l}} \sum_{\beta\in \Gamma_j\bsl \Gamma_j\alpha_{j,l,m}\Gamma_l} f(\beta g,j). 
\]

Next, let $P_0$ be an elliptic left-invariant differential operator on $G$ of degree $m$ which gives rise to a positive and symmetric operator $P$ on $\L^2(M)$ with strictly convex cospheres $S_x^*(M)$. With $\widetilde s_\mu$ as in Section \ref{sec:2.1} we obtain for the Schwartz kernel of $T_\alpha \circ \widetilde s_\mu$ the expression 
\bq \label{eq:4}
K_{T_\alpha\circ \widetilde s_\mu} (x,x)=\sum_{j=1}^h \sum_{m=1}^{c_{j,l}} \sum_{\beta\in \Gamma_j\bsl \Gamma_j\alpha_{j,l,m}\Gamma_l}K_{\widetilde s_\mu} ((\beta g , j),(g,l)), \qquad x=(g,l)\in M. 
\eq

As a consequence of Proposition \ref{prop:31.05.2017} we now deduce 
\begin{lem}\label{lem:non-equiv}
Choose a Hecke operator $T_\alpha$ on $\L^2(M)$ given by a tuple $\alpha\equiv(\alpha_{j,l,m})_{1\leq j,l\leq h, \; 1\leq m \leq c_{j,l}}$ as above. Set
\[
\delta_{\alpha,l}:=\sharp\{ m \mid  1\leq m \leq c_{l,l} , \, \Gamma_l\subset \Gamma_l\alpha_{l,l,m}\Gamma_l   \}.
\]
Then, for each $x=(g,l)\in M$ one has as $\mu \to +\infty$

\begin{align*}
&  K_{T_\alpha\circ \widetilde s_\mu} (x,x) - \delta_{\alpha,l} \, K_{ \widetilde s_\mu}(x,x)  \\
& =  O\Big (\mu^{(d-1)/2} \, D(\alpha,x) \,  \sum_{m=1}^{c_{l,l}} |\Gamma_l\bsl \Gamma_l\alpha_{l,l,m}\Gamma_l|+\mu^{-\infty}\,  \sum_{j=1}^h \sum_{m=1}^{c_{j,l}} |\Gamma_j\bsl \Gamma_j\alpha_{j,l,m}\Gamma_l|\Big )
\end{align*}
where
\[
D(\alpha,x):=  \max_{1\ne\beta\in \bigcup_{m=1}^{c_{l,l}} \Gamma_l\bsl \Gamma_l\alpha_{l,l,m}\Gamma_l}\dist(\Gamma_l \beta g,\Gamma_l g)^{-(d-1)/2}.
\] 
\end{lem}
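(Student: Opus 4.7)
The plan is to apply Proposition \ref{prop:31.05.2017} term-by-term to the sum
\begin{equation*}
K_{T_\alpha\circ \widetilde s_\mu}(x,x)=\sum_{j=1}^h \sum_{m=1}^{c_{j,l}} \sum_{\beta\in \Gamma_j\bsl \Gamma_j\alpha_{j,l,m}\Gamma_l}K_{\widetilde s_\mu}((\beta g,j),(g,l))
\end{equation*}
provided by \eqref{eq:4}, classifying each summand according to whether the pair $((\beta g,j),(g,l))$ coincides on the diagonal of $M\times M$, is distinct but lies in the same connected component, or sits in different connected components.

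The combinatorial key is the observation that $(\beta g,j)=(g,l)$ in $M$ is equivalent to the conjunction $j=l$ and $\beta\in\Gamma_l$, which forces $\Gamma_l\alpha_{l,l,m}\Gamma_l=\Gamma_l$, equivalently $\alpha_{l,l,m}\in\Gamma_l$, equivalently $\Gamma_l\subset\Gamma_l\alpha_{l,l,m}\Gamma_l$. For each such $m$, the double coset degenerates to the single representative $\beta=1$ and contributes exactly one summand $K_{\widetilde s_\mu}(x,x)$, yielding the total diagonal contribution $\delta_{\alpha,l}\, K_{\widetilde s_\mu}(x,x)$. Dually, for every $m$ with $\alpha_{l,l,m}\notin\Gamma_l$ one verifies that every coset representative $\beta$ satisfies $\Gamma_l\beta g\neq\Gamma_l g$, so that the corresponding $j=l$ summands sit strictly off the diagonal.

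For the off-diagonal summands I invoke Proposition \ref{prop:31.05.2017} with $\tilde N=0$. When $j\neq l$ the arguments lie in different components of $M$, hence $\dist=\infty$, the coefficients of the expansion vanish, and each summand is $O(\mu^{-\infty})$. When $j=l$ but $\alpha_{l,l,m}\notin\Gamma_l$, the proposition yields
\begin{equation*}
K_{\widetilde s_\mu}((\beta g,l),(g,l)) = O\bigl(\mu^{(d-1)/2}\dist(\Gamma_l\beta g,\Gamma_l g)^{-(d-1)/2}\bigr) + O(\mu^{-\infty}),
\end{equation*}
in which the geometric factor is pointwise bounded by $D(\alpha,x)$. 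Summing all three groups of contributions, and enlarging the range of $m$ in the first error term to the full $1\leq m\leq c_{l,l}$ (which is harmless), produces the stated estimate. The only genuine subtlety is the combinatorial identification of the truly diagonal summands detailed in the second paragraph; once that is in place, everything reduces to a direct application of the FIO asymptotics in Proposition \ref{prop:31.05.2017} combined with term-by-term summation over the Hecke coset representatives.
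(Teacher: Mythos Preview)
Your proof is correct and follows essentially the same approach as the paper's: split the sum \eqref{eq:4} into diagonal terms, off-diagonal terms in the same component, and cross-component terms, then apply Proposition~\ref{prop:31.05.2017} with $\tilde N=0$ to each. The paper is terser---it simply notes that the minimum distance over $1\neq\beta$ is strictly positive and then invokes the proposition, leaving the combinatorial identification of the diagonal contribution implicit---whereas you spell out explicitly why $(\beta g,j)=(g,l)$ forces $\Gamma_l\alpha_{l,l,m}\Gamma_l=\Gamma_l$ and hence a single coset representative; but the substance is the same.
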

\begin{proof}
To begin, note that by definition of the distance in $\Gamma_l\bsl G$ there exists a constant $c_{x,\alpha}>0$ such that 
\[
\min_{1\ne\beta\in\bigcup_{m=1}^{c_{l,l}} \Gamma_l \bsl \Gamma_l\alpha_{l,l,m}\Gamma_l}\dist(\Gamma_l\beta g,\Gamma_l g) > c_{x,\alpha}.
\]
Consequently, we infer for  any $x=(g,l)\in M$ from Proposition \ref{prop:31.05.2017} and \eqref{eq:4} that
\[
 K_{T_\alpha\circ \widetilde s_\mu} (x,x) - \delta_{\alpha,l} \,  K_{ \widetilde s_\mu}(x,x)  \ll  \mu^{(d-1)/2}   \sum_{m=1}^{c_{l,l}} \sum_{1\not=\beta\in \Gamma_l\bsl \Gamma_l\alpha_{l,l,m}\Gamma_l}\dist((\beta g ,l),(g,l))^{-(d-1)/2} 
\]
up to terms of order $O(\mu^{-\infty})$ times the cardinality of the sum in \eqref{eq:4}, since $\dist((\beta g,j),(g,l))=\infty$ if $j\neq l$. 
\end{proof}

Next, let  $K$ be any compact subgroup of $G$, and recall that $K$ acts on $G$ and  each $\Gamma_j \bsl G$ from the right  in an isometric  and effective way,  the isotropy group of a point $\Gamma_j g\in \Gamma_j \bsl G$ being conjugate 
to the finite group $gKg^{-1}\cap \Gamma_j$. Hence, all $K$-orbits in $\Gamma_j\bsl G$ are either principal or exceptional, and of dimension $\dim K$. Since the maximal compact subgroups of $G$ are precisely the conjugates of $K$, exceptional $K$-orbits arise from elements in $\Gamma_j $ of finite order. Consider the right regular representation $\pi$ of $K$ on $\L^2(M)$ together with the corresponding  Peter-Weyl decomposition \eqref{eq:PW}, and suppose that  $P_0$ commutes with $\pi$ and  the Hecke operators $T_\alpha$, which commute with the right regular $K$-representation as well. The  Schwartz kernel of $T_\alpha \circ \widetilde s_\mu\circ \Pi_\sigma $ is then given by the expression 
\bq \label{eq:4cis}
K_{T_\alpha\circ \widetilde s_\mu\circ \Pi_\sigma} (x,x)=\sum_{j=1}^h \sum_{m=1}^{c_{j,l}} \sum_{\beta\in \Gamma_j\bsl \Gamma_j\alpha_{j,l,m}\Gamma_l}K_{\widetilde s_\mu\circ \Pi_\sigma} ((\beta g , j),(g,l)), \qquad x=(g,l)\in M. 
\eq

As a consequence of Proposition \ref{prop:30.01.2017} one now deduces the following generalization of Lemma \ref{lem:non-equiv}.

\begin{lem}\label{lem:15.04.2018}
Choose a Hecke operator $T_\alpha$ on $\L^2(M)$ given by a tuple $\alpha\equiv(\alpha_{j,l,m})_{1\leq j,l\leq h, \; 1\leq m \leq c_{j,l}}$, and consider for each fixed point $x=(g,l) \in M$ the sets of Hecke points
\begin{align*}
 H(\alpha,x)&:= \coprod_{m=1}^{c_{l,l}} \mklm{y=(\beta g,l) \in \Gamma_l\bsl G \mid \, \beta\in  \Gamma_l\bsl \Gamma_l\alpha_{l,l,m}\Gamma_l},\\
 T(\alpha,x)&:=  \mklm{y=(\beta g,l) \in  H(\alpha,x)  \mid  \beta =g k_y g^{-1} \in g K g^{-1}},  \\
 C(\alpha,x)&:=  \mklm{y=(\beta g,l) \in H(\alpha,x) \mid \, \beta\in \Gamma(\alpha,l)} \subset T(\alpha,x),
\end{align*}
where the element $k_y\in K$ is uniquely determined by the Hecke point $y$, and
we put
\begin{align*}
\Gamma(\alpha,l)&:= \bigcup_{m=1}^{c_{l,l}} \{\beta\in \Gamma_l\bsl \Gamma_l\alpha_{l,l,m}\Gamma_l \mid \beta \in K \cap  C(G) \},
\end{align*}
$C(G)$ being the center of $G$. One then has for each  $\sigma \in \widehat K$ the asymptotic formula
\begin{align*}
&  K_{T_\alpha\circ \widetilde s_\mu\circ \Pi_\sigma} (x,x) -  \left [ \sum_{k \in  \Gamma(\alpha,l)}\sigma(k) +\sum_{y \in T(\alpha,x)-C(\alpha,x)}\sigma(k_y)\right ] K_{\widetilde s_\mu\circ \Pi_\sigma} (x,x) \\
& =  O\bigg ( \mu^{(d-\dim K -1)/2}  D_K(\alpha,x)  \sum_{m=1}^{c_{l,l}} |\Gamma_l\bsl \Gamma_l\alpha_{l,l,m}\Gamma_l|  +\mu^{-\infty}\,   \sum_{j=1}^h \sum_{m=1}^{c_{j,l}} |\Gamma_j\bsl \Gamma_j\alpha_{j,l,m}\Gamma_l|\bigg )
\end{align*}
as $\mu \to + \infty$, where
\[
D_K(\alpha,x):=  \max_{y \in H(\alpha,x)-T(\alpha,x)}\dist(y K, xK)^{-(d-\dim K-1)/2}.
\]
\end{lem}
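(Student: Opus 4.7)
The strategy is to mirror the proof of Lemma \ref{lem:non-equiv} with Proposition \ref{prop:30.01.2017} replacing Proposition \ref{prop:31.05.2017} throughout, and with the single point $x$ being replaced by its $K$-orbit $xK$. Starting from the decomposition \eqref{eq:4cis} of $K_{T_\alpha\circ\widetilde s_\mu\circ\Pi_\sigma}(x,x)$ as a double sum over $j,m,\beta$, the terms with $j\neq l$ involve Hecke points $(\beta g,j)$ lying in a connected component of $M$ distinct from that of $(g,l)$, hence at geodesic distance $+\infty$; the rapid-decay tail of Proposition \ref{prop:30.01.2017} makes each such kernel value $O(\mu^{-\infty})$, and, after counting cosets, this yields the second error contribution in the lemma.

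For $j=l$ I partition the Hecke points in $H(\alpha,x)$ into those lying on the orbit $xK$, namely $T(\alpha,x)$, and the off-orbit complement. A point $y=(\beta g,l)$ lies on $xK$ exactly when $\beta=gk_yg^{-1}$ for a unique $k_y\in K$. For off-orbit $y$ the distance $\dist(y,\O_x)$ is strictly positive and, by finiteness of the coset count, admits a uniform positive lower bound. The second assertion of Proposition \ref{prop:30.01.2017} (the case $y\notin\O_x$, applied with $\tilde N$ large and with the $\sigma$-dependent derivative norms absorbed into the implicit constant) then gives
\[
K_{\widetilde s_\mu\circ\Pi_\sigma}(y,x)\ll \mu^{(d-\dim K-1)/2}\,\dist(x,\O_y)^{-(d-\dim K-1)/2}.
\]
Majorising each distance factor by $D_K(\alpha,x)$ and summing over $m$ and the cosets $\beta$ reproduces the principal error term displayed in the lemma.

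The on-orbit points $y=xk_y\in T(\alpha,x)$, for which $K_{\widetilde s_\mu\circ\Pi_\sigma}(xk_y,x)$ shares the leading order $\mu^{d-\dim K-1}$ with $K_{\widetilde s_\mu\circ\Pi_\sigma}(x,x)$, are then handled via the key identification
\[
K_{\widetilde s_\mu\circ\Pi_\sigma}(xk_y,x)=\sigma(k_y)\,K_{\widetilde s_\mu\circ\Pi_\sigma}(x,x)+O\!\left(\mu^{(d-\dim K-1)/2}\right).
\]
I plan to extract this from the $K$-equivariance of $\widetilde s_\mu\circ\Pi_\sigma$ together with the FIO representation of its kernel used in the proof of Proposition \ref{prop:30.01.2017} in \cite{RW}: writing $\Pi_\sigma=d_\sigma\int_K\overline{\sigma(h)}\,\pi(h)\,dh$ and performing the substitution $h\mapsto k_y^{-1}h$ in the $K$-integration before the remaining stationary-phase reduction isolates $\sigma(k_y)$ as a multiplicative factor at the principal order, the non-class-function corrections arising from the $K_x$-isotypic structure being strictly below $\mu^{(d-\dim K-1)/2}$ and thus absorbable into the displayed error. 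Summing over $T(\alpha,x)$ and splitting the result into central Hecke representatives in $\Gamma(\alpha,l)\subset K\cap C(G)$ (for which $k_y=\beta$ is independent of $g$) and the non-central on-orbit points $T(\alpha,x)-C(\alpha,x)$ (where $k_y=g^{-1}\beta g$) produces the bracketed coefficient; these two sums are kept apart purely for convenience in later applications.

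The principal obstacle is the on-orbit identification above, since Proposition \ref{prop:30.01.2017} only bounds the leading coefficient $\Lcal_0^\sigma(x,y)$ uniformly by $\|\sigma\|_\infty$ rather than giving its explicit dependence on $k_y$. My plan is to revisit the stationary-phase construction in \cite{RW} and to carry out the $K$-averaging against $\overline{\sigma(h)}$ that defines $\Pi_\sigma$ before the remaining oscillatory integrations; this pulls the character factor out at leading order, and the residual discrepancy, controlled by the next subdominant term in the expansion of $K_{\widetilde s_\mu\circ\Pi_\sigma}$ restricted to the orbit, is dominated by the off-orbit error already present in the statement and is therefore absorbed into the first displayed error term.
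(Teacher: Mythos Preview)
Your handling of the cross-component terms ($j\neq l$) and of the off-orbit Hecke points $y\in H(\alpha,x)-T(\alpha,x)$ matches the paper exactly: both follow from Proposition~\ref{prop:30.01.2017} together with the convention $\dist((\beta g,j),(g,l))=\infty$ for $j\neq l$, and the paper's proof records precisely the same estimate \eqref{eq:20190224} that your outline produces.

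Where you diverge is in the on-orbit contribution, and here you are working much harder than the paper does. You flag the identification
\[
K_{\widetilde s_\mu\circ\Pi_\sigma}(xk_y,x)\;\approx\;\sigma(k_y)\,K_{\widetilde s_\mu\circ\Pi_\sigma}(x,x)
\]
as ``the principal obstacle'' and propose to establish it only approximately, by revisiting the stationary-phase construction underlying Proposition~\ref{prop:30.01.2017} and carrying out the $K$-average against $\overline{\sigma(h)}$ before the remaining oscillatory integrations. The paper bypasses all of this. It simply invokes the spectral expansion \eqref{eq:24.11.2016b} and the $K$-equivariance of the kernel of $\widetilde s_\mu\circ\Pi_\sigma$ to write, for $y=(\beta g,l)\in T(\alpha,x)$ with $\beta=gk_yg^{-1}$,
\[
K_{\widetilde s_\mu\circ\Pi_\sigma}(y,x)=\sum_{j\geq 0,\ \phi_j\in\L^2_\sigma(M)}\rho(\mu-\mu_j)\,\phi_j(\Gamma_l g k_y)\,\overline{\phi_j(\Gamma_l g)}=\sigma(k_y)\,K_{\widetilde s_\mu\circ\Pi_\sigma}(x,x),
\]
claimed as an \emph{exact} pointwise identity, not an asymptotic one. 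Summing over $T(\alpha,x)$ and then splitting into the central part $C(\alpha,x)$ (where $k_y=\beta\in K\cap C(G)$) and its complement immediately gives the bracketed coefficient in the statement, with no error term generated at this step.

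So the route you propose---re-entering the FIO machinery to extract the character factor up to $O(\mu^{(d-\dim K-1)/2})$---is an unnecessary detour: in the paper's presentation the on-orbit reduction is a one-line consequence of equivariance on the spectral side and requires nothing beyond the kernel formula \eqref{eq:24.11.2016b}. Your approach would eventually reach the same conclusion, but at the cost of reopening the proof of Proposition~\ref{prop:30.01.2017} rather than merely quoting it.
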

\begin{rem}
\label{rem:2.6}
In the statement of the lemma, keep in mind that according to Remark \ref{rem:4.5.19} the symbol $\beta\equiv \Gamma_l \beta$ denotes both the coset $\Gamma_l \beta$ as well as a suitable representative $\beta$. In this sense, the relations $\beta=g k_y g^{-1}$ and $\beta \in K \cap C(G)$ are to be understood that they are valid for a suitable representative. Furthermore,  taking $K=\mklm{1}$ one recovers Lemma \ref{lem:non-equiv}. 
\end{rem}

\begin{proof}
Since $K$-orbits are closed, one has for $g,g_1 \in G$ the equivalences
\begin{align*}
\dist(\Gamma_l g K, \Gamma_l g_1 K)=0 &\quad \Longleftrightarrow \quad \Gamma_l g K \cap  \Gamma_l g_1 K\neq \emptyset  \\ & \quad \Longleftrightarrow \quad \Gamma_l g K = \Gamma_l g_1 K   \\ 
& \quad \Longleftrightarrow \quad \text{there exist $\gamma \in \Gamma_l$ and $k \in K$ such that $\gamma g k =g_1$}.
\end{align*}
Consequently, one deduces  for any  $ \beta\in \sqcup_{m=1}^{c_{l,l}} \Gamma_l\bsl \Gamma_l\alpha_{l,l,m}\Gamma_l$, eventually after choosing a suitable representative, the implications
\bqn 
\dist(\Gamma_l g K,\Gamma_l \beta g K) =0  \quad \Longleftrightarrow \quad \beta= g k_\beta g^{-1} \text{ for some  $k_\beta \in K$}.
\eqn
From Proposition \ref{prop:30.01.2017} and  \eqref{eq:4cis}  we then infer  for any $x=(g,l)\in M$ that
\begin{multline}\label{eq:20190224}
 K_{T_\alpha\circ \widetilde s_\mu\circ \Pi_\sigma} (x,x) -  \sum_{y \in   T(\alpha,x)}K_{\widetilde s_\mu\circ \Pi_\sigma} (y,x)   \\
 \ll  \mu^{(d-\dim K -1)/2}  \sum_{y \in H(\alpha,x)- T(\alpha,x)}\dist(yK ,xK)^{-(d-\dim K - 1)/2} 
\end{multline}
up to terms of order $O(\mu^{-\infty})$ times the cardinality of the sum in \eqref{eq:4cis}, since $\dist((\beta g,j),(g,l))=\infty$ if $j\neq l$. Now, as a consequence of the $K$-equivariance of the kernel of $\widetilde s_\mu\circ \Pi_\sigma$ one deduces for any 
$y =(\beta g,l) \in T(\alpha,x)$ the equality 
\begin{align*}
K_{\widetilde s_\mu\circ \Pi_\sigma} (y,x)&=
\sum_{j \geq 0,  \phi_j \in \L^2_\sigma(M)} \rho(\mu-\mu_j) \phi_j(\Gamma_l gk_y ) \overline{\phi_j(\Gamma_l g)}=\sigma(k_y) K_{\widetilde s_\mu\circ \Pi_\sigma} (x,x),
\end{align*}
so that 
\bqn 
\sum_{y\in  T(\alpha,x)}K_{\widetilde s_\mu\circ \Pi_\sigma} (y,x)=  \sum_{y\in  T(\alpha,x)}\sigma(k_y)  K_{\widetilde s_\mu\circ \Pi_\sigma} (x,x).
\eqn 
Since for $y=(\beta g,l) \in C(\alpha,x)$ one has $k_y=\beta$, the assertion of the lemma  follows. 
\end{proof}

In the remaining of this section, let us assume that $K$ is the maximal compact subgroup given by the Cartan decomposition \eqref{eq:cartan}, in which case  $C(G) \subset K$.

\begin{lem}\label{lem:22.5.2019}
For $\beta \in G$ set $N(\beta, K):=\{ h\in G \mid h^{-1}\beta h\in K\}$,  $C_\beta':=\mklm{h^{-1} \beta h \mid h \in G}$, and assume that  $C_\beta' \cap K\not=\emptyset$. Then $N(\beta,K)$ is an analytic manifold. Moreover, if $G$ has no compact simple factors,
\begin{align*}
 \dim N(\beta,K)=d \quad \Longrightarrow \quad \beta \in C(G) \quad \Longrightarrow \quad N(\beta,K)=G.
\end{align*}
\end{lem}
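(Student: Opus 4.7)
The plan is: first reduce to the case $\beta\in K$ via a conjugation, next identify $N(\beta,K)$ explicitly as the homogeneous space $Z_G(\beta)\cdot K$, and finally invoke the structural identity $\k=[\p,\p]$, which holds precisely when $G$ has no compact simple factors. Since $C_\beta'\cap K\neq\emptyset$, choose $h_0\in G$ with $k_0:=h_0^{-1}\beta h_0\in K$; left translation by $h_0$ is a real-analytic diffeomorphism sending $N(k_0,K)$ onto $N(\beta,K)$, so we may assume $\beta=k_0\in K$. The essential geometric step is then to show that $C_\beta'\cap K$ is a single $K$-conjugacy class $K\cdot\beta$. To do this, suppose $k'=g^{-1}\beta g\in K$, write $g=k_1\exp(X)$ via the Cartan decomposition, and set $\beta_1:=k_1^{-1}\beta k_1\in K$, so that $k'=\exp(-X)\beta_1\exp(X)$. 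Applying the Cartan involution $\theta$, which fixes $K$ pointwise and satisfies $\theta(\exp(X))=\exp(-X)$, yields $\exp(2X)\beta_1=\beta_1\exp(2X)$, hence $\Ad(\beta_1)X=X$; but then $\beta_1$ commutes with $\exp(X)$, so $k'=\beta_1\in K\cdot\beta$.

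With this, $h^{-1}\beta h\in K$ iff $h^{-1}\beta h=k^{-1}\beta k$ for some $k\in K$, iff $hk^{-1}\in Z_G(\beta)$; equivalently
\[
N(\beta,K)=Z_G(\beta)\cdot K.
\]
Since $K$ is compact, the map $(z,k)\mapsto zk$ from $Z_G(\beta)\times K$ to $G$ is proper, and its fibers are the orbits of the diagonal action of $Z_K(\beta):=Z_G(\beta)\cap K$ given by $(z,k)\mapsto(zt^{-1},tk)$. Consequently $N(\beta,K)\cong(Z_G(\beta)\times K)/Z_K(\beta)$ is a closed embedded real-analytic submanifold of $G$ of dimension $\dim Z_G(\beta)+\dim K-\dim Z_K(\beta)$. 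Since $Z_G(\beta)$ is $\theta$-stable, its Lie algebra decomposes as $\g^\beta=\k^\beta\oplus\p^\beta$, where $\g^\beta:=\{Y\in\g\mid \Ad(\beta)Y=Y\}$, so $\dim N(\beta,K)=\dim K+\dim\p^\beta\leq d$, with equality iff $\p^\beta=\p$, iff $\Ad(\beta)$ fixes $\p$ pointwise.

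Finally, assume $\dim N(\beta,K)=d$, so $\Ad(\beta)|_\p=I_\p$. Under the hypothesis that $G$ has no compact simple factors, $\k=[\p,\p]$: for each (necessarily noncompact) simple ideal $\g_i=\k_i\oplus\p_i$ of $\g$, the subspace $[\p_i,\p_i]\oplus\p_i$ is a nonzero ideal, hence equals $\g_i$ by simplicity. Because $\Ad(\beta)$ is a Lie-algebra automorphism, it therefore fixes $[\p,\p]=\k$ pointwise, and hence all of $\g$; thus $\beta\in\ker\Ad=Z(G)=C(G)$. The converse $\beta\in C(G)\Rightarrow N(\beta,K)=G$ is immediate since $C(G)\subset K$ and $h^{-1}\beta h=\beta\in K$ for every $h\in G$. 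The main subtle point is the $\theta$-involution argument in the first paragraph establishing $C_\beta'\cap K=K\cdot\beta$; once this is in hand, the homogeneous-space description of $N(\beta,K)$ and the algebraic implication from $\k=[\p,\p]$ are routine.
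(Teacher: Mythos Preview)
Your proof is correct and takes a genuinely different, more structural route than the paper. The paper first appeals to an external result of Richardson to see that $C_{k_0}'\cap K$ is an analytic manifold, and then proves the implication $\dim N(\beta,K)=d\Rightarrow\beta\in C(G)$ by a first-order perturbation computation in the global Cartan decomposition: writing $\exp(-X)k_0\exp(X)=\exp(Y_1)$ with $Y_1\in\k$, it expands $\exp(-X_1)\exp(Y_1)\exp(X_1)=\exp(Y_1-[X_1,Y_1]+O(\|X_1\|^2))$ and observes that the $\p$-component $[X_1,Y_1]$ forces this out of $K$ for generic small $X_1$ unless $k_0$ is central. By contrast, your $\theta$-involution argument yields the sharp statement $C_\beta'\cap K=K\cdot\beta$ (a single $K$-conjugacy class), from which $N(\beta,K)=Z_G(\beta)K$ is identified \emph{explicitly} as a homogeneous space with exact dimension $\dim K+\dim\p^\beta$; the implication then reduces to the clean algebraic fact $\k=[\p,\p]$. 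This avoids the citation to Richardson, gives strictly more information (the precise structure and dimension of $N(\beta,K)$), and makes the role of the hypothesis ``no compact simple factors'' completely transparent.

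One small caveat: your final line ``$\beta\in\ker\Ad=Z(G)=C(G)$'' tacitly assumes $G$ is connected; in general $\ker\Ad=Z_G(G^0)$, which a priori may be larger than $C(G)$. The paper addresses the disconnected case by embedding $G\subset\SL(N,\R)$, $K\subset\SO(N)$ and repeating the argument there. Within your framework one can close this as follows: from $\Ad(\beta)|_\p=I$ and the fact that $\Ad(k)$ preserves $\p$, one gets $\Ad(k\beta k^{-1})|_\p=I$ for every $k\in K$; hence for $g=k\exp(X)$ one computes $g^{-1}\beta g=\exp(-X)(k^{-1}\beta k)\exp(X)=k^{-1}\beta k\in K$, so $N(\beta,K)=G$ regardless of connectedness, and $\beta\in C(G)$ is equivalent to $\beta\in C(K)$. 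This is a minor refinement and does not affect the correctness of your argument in the connected case or in the paper's subsequent applications.
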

\begin{proof}
By assumption we have  $\beta=gk_0 g^{-1}$ for some $g \in G$, $k_0 \in K$. Then $N(\beta,K)= g N(k_0,K)$, and 
\bqn
C_{k_0}' \cap K = \mklm{h^{-1} k_0 h \in K \mid h \in G}\simeq G_{k_0} \bsl N(k_0,K),
\eqn
where $G_{k_0}=\mklm{h \in G \mid hk_0h^{-1}=k_0}$. By \cite[Theorem 3.1]{richardson67}, $C_{k_0}' \cap K$ is an  analytic manifold, and consequently also $N(k_0,K)$ and $N(\beta,K)$, proving the first assertion. Next, $\beta \in C(G)$ implies that $N(\beta,K)=G$ has dimension $d$. Conversely, assume that $N(\beta,K)$ has dimension $d$, and consider the global Cartan decomposition corresponding to  \eqref{eq:cartan}, which is given by the diffeomorphism
\bq
\label{eq:globalCartan}
 \p \times K  \ni  (X,k) \quad \longmapsto \quad \exp{X} \cdot  k=g \in G. 
\eq
 Then 
 \begin{align*}
 N(k_0,K)&\simeq  \mklm{(X,k) \in \p \times K  \mid \exp (-X) \cdot k_0 \cdot  \exp X \in K} 
 \end{align*}
 has dimension $d$. Next, note that for arbitrary $Y,Z \in \g$, and $h \in G$ one has $h \cdot \exp (Y) \cdot h^{-1}=\exp (\Ad(h) Y)$, as well as
 \bqn 
 \Ad(\exp(-sZ)) Y = Y-s[Z,Y]+O(s^2),
 \eqn
 provided that $s \in \R$ has small absolute value, see \cite[pp. 127 and 128]{helgason78}.  Now, let $h=\exp X \cdot k \in N(k_0,K)$ be arbitrary and $U_h \subset N(k_0,K)$ an open neighborhood of $h$. By assumption, $U_h$ is $d$-dimensional, so that 
 \bq
 \label{eq:22.5.2019}
 \exp X \cdot \exp X_1 \cdot k \in U_h \qquad \text{for all $X_1 \in \p$ with $\norm{X_1}$ sufficiently small.} 
 \eq
 By assumption, $\exp(-X) \cdot k_0 \cdot \exp X=k_1$ for some $k_1 \in K$. If $K$ is connected, the exponential map from $\k_0$ to $K$ is onto, so that we can write $k_1=\exp Y_1$ for some $Y_1 \in \k$.  In view of $[\p,\p] \subset \k$, $[\k,\p] \subset \p$, and $C(G) \subset K$ we conclude that for almost all $X_1 \in \p$ with $\norm{X_1}$ sufficiently small
  \begin{align*}
 \exp(-X_1) \cdot \exp(-X) \cdot k_0 \cdot  \exp X \cdot \exp X_1 &=  \exp(-X_1) \cdot  \exp Y_1 \cdot \exp X_1 \\
 & = \exp (Y_1 -[X_1,Y_1]+O(\norm{X_1}^2)) \notin K,
 \end{align*}
 unless $k_0 \in C(G)$. 
Note that here we have used the assumption that $G$ has no compact simple factors. If $K$ is not connected,   we may  suppose that $G\subset \SL(N,\R)$ and $K\subset \SO(N)$ for some sufficiently large $N \in \N$,  and repeat the above arguments using the surjectivity of the exponential onto $\SO(N)$ and the Cartan decomposition of $\SL(N,\R)$. 
 Since by  \eqref{eq:22.5.2019} we must have $\exp X \cdot \exp X_1 \cdot k \in N(k_0,K)$, we conclude that $\beta=k_0 \in C(G)$, completing the proof. \end{proof}

Using the previous lemma one deduces 

\begin{lem}\label{lem:zero}
In the situation of Lemma \ref{lem:15.04.2018}, suppose that $G$ has no compact simple factors. Then the function defined by 
\bqn 
F(x):=\sum_{y \in T(\alpha,x)-C(\alpha,x)} \sigma(k_y)
\eqn
if $ T(\alpha,x)-C(\alpha,x)\neq \emptyset$, $F(x):=0$ else,   is supported on a set of measure zero in $M$.
\end{lem}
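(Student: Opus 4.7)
My plan is to show that the support of $F$ projects onto a measure-zero set by exhibiting it, for each component index $l$, as contained in a countable union of analytic submanifolds of $G$ of positive codimension; passing to $\Gamma_l\bsl G$ and summing over the finitely many $l$ then gives the claim.

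First I would fix $x=(g,l)\in M$ with $T(\alpha,x)-C(\alpha,x)\neq \emptyset$ and pick $y=(\beta g,l)$ in this set. By definition of $T(\alpha,x)$, after replacing the representative $\beta$ by a suitable $\Gamma_l$-translate we may assume $\beta=gk_y g^{-1}$ for some $k_y\in K$, so that $g\in N(\beta,K)$ and in particular $C'_\beta\cap K\neq \emptyset$. Because $G$ is semisimple with no compact simple factors and $K$ arises from the Cartan decomposition \eqref{eq:cartan}, the center satisfies $C(G)\subset K$, so that $K\cap C(G)=C(G)$; consequently, the hypothesis $y\notin C(\alpha,x)$ translates into the statement that no representative of the coset $\Gamma_l\beta$ lies in $C(G)$, i.e.\ $\gamma\beta\notin C(G)$ for every $\gamma\in \Gamma_l$.

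Second, I would invoke Lemma \ref{lem:22.5.2019}: since $\gamma\beta\notin C(G)$, the set $N(\gamma\beta,K)$ is either empty or an analytic submanifold of $G$ of dimension strictly less than $d$, hence of Haar measure zero. Observing that the Hecke point $y$ arises in $T(\alpha,x)$ precisely when $g^{-1}(\gamma\beta)g\in K$ for some $\gamma\in \Gamma_l$, the locus in $G$ producing $y$ from the coset $\Gamma_l\beta$ equals $\bigcup_{\gamma\in \Gamma_l} N(\gamma\beta,K)$, a countable union of null sets. Taking the further union over the finitely many cosets $\Gamma_l\beta\subset \Gamma_l\alpha_{l,l,m}\Gamma_l$ (finite by \eqref{eq:06.04.2018}), over $1\leq m\leq c_{l,l}$, and over $1\leq l\leq h$, yields a null set in $M$ containing $\supp F$.

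The argument is essentially bookkeeping on top of Lemma \ref{lem:22.5.2019}: the genuine analytic content — that $N(\beta,K)$ has positive codimension whenever $\beta\notin C(G)$ — has already been absorbed into that lemma, so no substantial new obstacle arises beyond checking that the index sets are countable and that $C(G)\subset K$, which justifies identifying the centrally-conjugate Hecke points $C(\alpha,x)$ as the full-dimensional locus that must be excised.
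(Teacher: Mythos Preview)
Your proof is correct and rests on the same key input as the paper's, namely Lemma~\ref{lem:22.5.2019}, but the organization is different. The paper argues locally: it fixes $x=(g,l)$ and perturbs to $x_1=(gh,l)$ with $h$ in a small neighbourhood of the identity, showing that any Hecke point of $x_1$ lying in $T(\alpha,x_1)-C(\alpha,x_1)$ forces $h$ into a translate $g^{-1}N(\gamma\beta,K)$, which has positive codimension; hence $\supp F$ meets each chart in a null set. You instead bound $\supp F$ globally, exhibiting it inside the projection of a countable union $\bigcup_{l,m,\Gamma_l\beta,\gamma} N(\gamma\beta,K)$. Your route is arguably more transparent, since it sidesteps the implicit passage from ``locally measure zero near every point'' to ``measure zero'' that the paper leaves to the reader.

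One point of phrasing to tighten: when you say ``taking the further union over the finitely many cosets $\Gamma_l\beta\subset\Gamma_l\alpha_{l,l,m}\Gamma_l$'', you must restrict to those cosets with no representative in $C(G)$, since for a coset admitting a central representative $\gamma_0\beta$ one has $N(\gamma_0\beta,K)=G$ and the union would no longer be null. You have already isolated exactly this restriction in your first paragraph (``$\gamma\beta\notin C(G)$ for every $\gamma\in\Gamma_l$''), so the fix is purely cosmetic.
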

\begin{proof}
To begin, let $U\subset G$ be a sufficiently small  open neighbourhood of the identity and $h \in U$. For $x=(g,l)$ and $x_1=(gh,l) \in M$, one has the one-to-one correspondence of Hecke points
 \bqn 
(\beta g,l) \in H(\alpha,x)  \qquad \stackrel{1:1}\longleftrightarrow \qquad (\beta g h,l) \in H(\alpha,x_1);
 \eqn
furthermore, points in $C(\alpha,x)$ correspond to points in $C(\alpha,x_1)$. 
Now,  take any  $y=(\beta g,l) \in H(\alpha,x)-C(\alpha,x)$. Then, by Lemma \ref{lem:22.5.2019},  $\dim N(\gamma \beta,K) < d$ for all  $\gamma\in \Gamma_l$. 
  In order that   $y_1=(\beta gh, l) \in T(\alpha,x_1)-C(\alpha,x_1)$ we must have $\gamma \beta=gh k_{y_1} h^{-1} g^{-1}$ for some $\gamma \in \Gamma_l$ and  $k_{y_1} \in K$, which is equivalent to 
\bqn 
h \in N(g^{-1} \gamma \beta g,K)=g^{-1} N(\gamma \beta,K). 
\eqn
That is, $h$ must belong to a lower dimensional set in $U$. In other words, Hecke points in $T(\alpha,x_1)-C(\alpha,x_1)$ can only arise from Hecke points in $H(\alpha,x)-C(\alpha,x)$ by deformation along a measure zero set. 
 Consequently, if $x \in \supp F$ we can only have $x_1 \in \supp F$ if $h$ belongs to a measure zero set in $U$, and the assertion follows.
\end{proof}
As a consequence of the previous lemma, 
 \bqn 
\int_{\Gamma_l\bsl G} \left [ \sum_{k\in \Gamma(\alpha,l)}\sigma(k) +F(x) \right ] K_{\widetilde s_\mu\circ \Pi_\sigma} (x,x) \d x= \sum_{k\in \Gamma(\alpha,l)}\sigma(k) \int_{\Gamma_l\bsl G}  K_{\widetilde s_\mu\circ \Pi_\sigma} (x,x) \d x,
\eqn
so that non-central torsion elements do not contribute to the leading term in Lemma \ref{lem:15.04.2018} after integration over $\Gamma_l\bsl G$.

\section{Non-equivariant asymptotics  for Hecke eigenvalues and Sato-Tate equidistribution}\label{sec:4}

We commence our study of the asymptotic distribution of Hecke eigenvalues by considering first  the non-equivariant setting. 

\subsection{Preliminaries} Let $H$ denote a semisimple connected linear algebraic group over the rational number field $\Q$.
We may suppose that $H$ is a closed subgroup of $\SL(N)$ over $\Q$ for a fixed $N\in\N_*$.
We write $\Q_p$ for the $p$-adic number field, and $\bA$ (resp. $\bA_\fin$) for the adele (resp. finite adele) ring of $\Q$.
We choose an open compact subgroup $K_0$ of $H(\bA_\fin)$.
As usual, we regard $H(\Q)$ as a subgroup of $H(\bA)$ by the diagonal embedding.
By the finiteness of class numbers \cite[Theorems 5.1 and 8.1]{PR}, there exist elements $x_1=1,x_2,\dots,x_{c_H}$ in $H(\bA_\fin)$ such that
\bq \label{eq:d}
H(\bA)=\bigsqcup_{l=1}^{c_H} H(\Q)x_lH(\R)K_0.
\eq
There exists also a finite set $S_0$ of primes such that
\begin{itemize}
\item[(C1)] $H$ is unramified over $\Q_p$ for every prime $p\not\in S_0$;
\item[(C2)] one has $K_0=K_{S_0} \prod_{p\not\in S_0}K_p$, where  $K_{S_0}$ is an open compact subgroup of $\prod_{p\in S_0} G(\Q_p)$ and $K_p=H(\Q_p)\cap \SL(N,\Z_p)$ is a hyperspecial compact subgroup of $H(\Q_p)$ for every $p\not\in S_0$;
\item[(C3)] for $x_l=(x_{l,p})_p$, we have $x_{l,p}\in K_p$ for any $p\not\in S_0$. In other words, we may suppose that $x_{l,p}=1$ for all $ p\not\in S_0$ without loss of generality.
\end{itemize}
For details, we refer to \cite{Tits}, and  normalize the Haar measures on $H(\bA_\fin)$ and $H(\Q_p)$ by setting $\vol(K_0)=1$ and $\vol(K_p)=1$ for all $p\not\in S_0$.
 
 Next,  fix a prime $p\not\in S_0$. The group $G_p:=H(\Q_p)$ has a Borel subgroup that contains a maximal $\Q_p$-torus $T_p$, and its  Cartan decomposition reads $G_p=K_p T_p K_p$.
Let $A_p$ denote the maximal $\Q_p$-split subtorus in $T_p$.
Since the inclusion mapping $A_p\subset T_p$ induces an isomorphism $A_p/A_p\cap K_p \cong T_p/T_p\cap K_p$, one gets $G_p=K_p A_p K_p$.
Let $X_*(A_p)$ denote the abelian group of co-characters of $A_p$.
A hight function $\|\cdot  \|_p$ on $G_p$ is defined by $\|g\|_p:=\max_{i,j}\{|g_{i,j}|_p,|g_{i,j}'|_p\}$ for $g=(g_{i,j})_{1\leq i,j\leq N}\in G_p\subset \SL(N,\Q_p)$ and $g^{-1}=(g_{i,j}')_{1\leq i,j\leq N}$, where $|\cdot |_p$ denotes the valuation of $\Q_p$.
Note that $\| k_1gk_2 \|_p=\| g \|_p$ holds for any $k_1,k_2\in K_p$ and $g\in G_p$.
Further, we define a hight function $\|\cdot \|_p$ on $X_*(A_p)$ by
\[
\| \omega \|_p := \left| \frac{\log\| \omega(p) \|_p}{\log p}\right| \in \N, \qquad \omega\in X_*(A_p),
\]
as well as the unramified Hecke algebra $\mathcal{H}^\mathrm{ur}(G_p):=C^\infty_c(K_p\bsl G_p/K_p)$, which  is generated by the family of characteristic functions $\tau_\omega$ of the double cosets $K_p\omega(p)K_p$ with ${\omega\in X_*(A_p)}$.  Also, for each $\kappa\in\N$, a truncated unramified Hecke algebra $\mathcal{H}^\mathrm{ur}_\kappa(G_p)$ is defined by
\[
\mathcal{H}^\mathrm{ur}_\kappa(G_p):=\langle \tau_\omega \mid  \quad \omega\in X_*(A_p) ,\,   \|\omega\|_p\leq\kappa \rangle.
\]
For other, essentially equivalent definitions of  $\mathcal{H}^\mathrm{ur}_\kappa(G_p)$ see \cite[Section 2.3]{ST}, \cite[Section 3.4]{Marshall2017}, and  \cite[Lemma 3.5]{Marshall2017}.

In what follows, we write $G_p^{\wedge,\mathrm{ur}}$ (resp. $G_p^{\wedge,\mathrm{ur,temp}}$) for the unramified (resp. unramified and tempered) part of the unitary dual of $G_p$.
Let $\Omega_p$ denote the $\Q_p$-rational Weyl group for $(G_p,A_p)$.
By the canonical map given in \cite[pp. 33--34]{ST}, we have the topological injective mapping $G_p^{\wedge,\mathrm{ur}} \to \widehat{A_p}/\Omega_p$ and the topological isomorphism
\[
G_p^{\wedge,\mathrm{ur,temp}} \cong \widehat{A_p}_c/\Omega_p
\]
where $\widehat{A_p}$ denotes the dual torus of $A_p$ and $\widehat{A_p}_c$ denotes the compact subtorus of  $\widehat{A_p}$.
For $f\in \mathcal{H}^\mathrm{ur}(G_p)$, a continuous function $\widehat{f}$ on $\widehat{A_p}/\Omega_p$ is defined by
\[
\widehat{f}(\pi):=\Tr \pi(f), \qquad \pi \in G_p^{\wedge,\mathrm{ur}},
\]
and it is well-known that the Plancherel measure $\widehat{m}_p^\mathrm{Pl,ur}$ on $G_p^{\wedge,\mathrm{ur}}$ satisfies
\bq \label{eq:localplan}
\widehat{m}_p^\mathrm{Pl,ur}(\widehat{f})=f(1) , \qquad  f\in \mathcal{H}^\mathrm{ur}(G_p). 
\eq
Notice that the support of $\widehat{m}_p^\mathrm{Pl,ur}$ is included in $G_p^{\wedge,\mathrm{ur,temp}} \cong \widehat{A_p}_c/\Omega_p$.
For explicit descriptions of the Plancherel measure $\widehat{m}_p^\mathrm{Pl,ur}$, we refer to \cite{MacdonaldBull} and \cite[Proposition 3.3]{ST}.

Next, let $S$ be a finite set of prime numbers.
Suppose that $S$ has no intersection with $S_0$, and  set
\[
\Q_{S}:=\prod_{p\in S}\Q_p \quad \text{and} \quad \mathcal{H}^\mathrm{ur}_\kappa(H(\Q_{S})):=\bigotimes_{p\in S}\mathcal{H}^\mathrm{ur}_\kappa(H(\Q_p)).
\]
Each element $(g_p)_{p\in S}\in H(\Q_S)$ is identified with the element $(y_v)_{v<\infty}\in H(\bA_\fin)$ such that $y_p=g_p$ for all $ p\in S$ and $y_v=1$  for all $v\not\in S$.
Write $H(\Q_S)^{\wedge,\mathrm{ur}}$ (resp. $H(\Q_S)^{\wedge,\mathrm{ur,temp}}$) for the unramified (resp. unramified and tempered) part of the unitary dual of $H(\Q_S)$. 
Clearly, there is an injective mapping $H(\Q_S)^{\wedge,\mathrm{ur}} \to \prod_{p\in S} \widehat{A_p}/\Omega_p$, and one has an isomorphism
\bqn
 H(\Q_S)^{\wedge,\mathrm{ur,temp}}\cong \prod_{p\in S}\widehat{A_p}_c/\Omega_p.
\eqn
Further, for each $f_S\in \mathcal{H}^\mathrm{ur}(H(\Q_{S}))$  define the continuous function 
\bqn
\widehat{f_S}: \,  H(\Q_S)^{\wedge,\mathrm{ur}} \ni \pi_S \longmapsto   \widehat{f_S}(\pi_S):=\Tr\pi_S(f_S).
\eqn
Since the Plancherel measure $\widehat{m}_S^\mathrm{Pl,ur}$ on $H(\Q_S)^{\wedge,\mathrm{ur}}$ satisfies $\widehat{m}_S^\mathrm{Pl,ur}=\prod_{p\in S} \widehat{m}_p^\mathrm{Pl,ur}$, one has  $\widehat{m}_S^\mathrm{Pl,ur}(\widehat{f_S})=f_S(1)$ by \eqref{eq:localplan} and the support of $\widehat{m}_S^\mathrm{Pl,ur}$ is contained in $H(\Q_S)^{\wedge,\mathrm{ur,temp}}$. 

We shall now recall briefly some fundamental facts about Sato-Tate measures for $H$, and refer the reader to \cite[Section 5]{ST} for details. 
Denote by $\widehat{H}$ the dual group of $H$  and by $\widehat{T}$ the maximal torus in $\widehat{H}$, which is a constituent of the root datum, compare  \cite{BorelLfct}. The Galois group $\Gal(\overline{\Q}/\Q)$ acts on $\widehat{H}$ via its natural action on the root datum, and there exists a finite extension $F_1$ of $\Q$ such that $\Gal(\overline{\Q}/\Q)$ acts on $\widehat{H}$ through the faithful action of $\Gamma_1:=\Gal(F_1/\Q)$.
Let $\widehat{K_1}$ be a $\Gamma_1$-invariant maximal compact subgroup of $\widehat{H}$, set $\widehat{T}_c:=\widehat{T}\cap \widehat{K_1}$, and let $\Omega_c$ denote the Weyl group for $(\widehat{K_1},\widehat{T}_c)$.
For each $\theta\in\Gamma_1$, set
\[
\widehat{T}_{c,\theta}:=\widehat{T}_c/(\theta-\mathrm{id})\widehat{T}_c , \qquad \Omega_{c,\theta}:=\{ w\in \widehat{T}_c \mid \theta(w)=w\},
\]
and denote by $\widehat{m}^\mathrm{ST}_{\theta}$  the $\theta$-Sato-Tate measure  on $\widehat{T}_{c,\theta}/ \Omega_{c,\theta}$ introduced in \cite[Definition 5.1]{ST}. It can be characterized by a limit of Plancherel measures as follows. Write $\mathscr{C}(\Gamma_1)$ for a set of representatives of conjugacy  classes in $\Gamma_1$, and consider the corresponding  partition of the set of primes outside $S_0$ 
\[
\{ \mathcal{V}(\theta)\}_{\theta\in\mathscr{C}(\Gamma_1)}.
\]
 Fix $\theta\in\mathscr{C}(\Gamma_1)$, and for each $p\in \mathcal{V}(\theta)$ choose an inclusion $\overline{\Q}\hookrightarrow \overline{\Q_p}$ such that the Frobenius $\mathrm{Fr}_p$ in $\Gal(\Q_p^\mathrm{ur}/\Q_p)$ has image $\theta$ in $\Gamma_1$, yielding  the identification
\bq \label{eq:ST(5.2)}
\widehat{T}_{c,\theta}/ \Omega_{c,\theta} = \widehat{T}_{c,\mathrm{Fr}_p}/ \Omega_{c,\mathrm{Fr}_p}\cong G_p^{\wedge,\mathrm{ur,temp}},
\eq
see \cite[(5.2)]{ST}. By \cite[Proposition 5.3]{ST} one then has the weak convergence
\[
\widehat{m}_p^\mathrm{Pl,ur}\to \widehat{m}^\mathrm{ST}_{\theta} \quad \text{as $p\to \infty$ in $\mathcal{V}(\theta)$.}
\]
Consequently, there is a unique Sato-Tate measure $\widehat{m}^\mathrm{ST}$, which coincides with the limit $\lim_{p\to \infty}\widehat{m}_p^\mathrm{Pl,ur}$ in the weak topology.

\subsection{Non-equivariant asymptotics and equidistribution results}\label{Non-equivariant case}
In this paper we will mainly deal with the case where $H(\Q)\bsl H(\bA)$ is compact, which we assume from now on. In this situation,  $\L^2(H(\Q)\bsl H(\bA))$ decomposes into a countable orthogonal direct sum of irreducible unitary representations $\pi$ of $H(\bA)$, so that 
\[
\L^2(H(\Q)\bsl H(\bA))\cong \bigoplus_{\pi\in\widehat{H(\bA)}}  V_\pi^{\oplus m_\pi},
\]
where $\widehat{H(\bA)}$ denotes the unitary dual of $H(\bA)$,  $m_\pi\in\N$ the multiplicity of $\pi, $ and $V_\pi$  a representative space of $\pi$, see \cite{GGP}.
For each double coset $K_0\alpha K_0$ with $\alpha\in H(\bA_\fin)$, a Hecke operator $T_{K_0\alpha K_0}$ on $\L^2(H(\Q)\bsl H(\bA)/K_0)$ is defined by
\[
(T_{K_0\alpha K_0}\phi)(x):=\sum_{\beta\in K_0\alpha K_0/K_0}\phi(x\beta), \qquad \phi\in \L^2(H(\Q)\bsl H(\bA)/K_0).
\]

Write $G:=H(\R)$, and let $K$ be a maximal compact subgroup of $G$, and $\Delta$ the Beltrami-Laplace operator on $G$.
It is known that $G$ is a $d$-dimensional semisimple real Lie group with finite center \cite{PR} and that $\Delta=-\mathcal{C}+2\mathcal{C}_K$, where $\mathcal{C}$ (resp. $\mathcal{C}_K$) denotes the Casimir operator of $G$ (resp. $K$), compare \cite{RW}.
We choose an orthonormal basis $\{\phi_j\}_{j\in\N}$ in $\L^2(H(\Q)\bsl H(\bA)/K_0)$ such that each $\phi_j$ is a $\Delta$-eigenfunction included in a single space $V_\pi$.
Since any automorphic representation $\pi$ factors as a tensor product $\pi=\otimes_v\pi_v$  of irreducible unitary representations $\pi_v$ of $H(\Q_v)$ for all places $v$ of $\Q$ \cite{Flath}, any $\phi_j$ is a simultaneous eigenfunction for $\Delta$ and $T_{K_0\alpha K_0}$ for any $\alpha\in H(\bA_\fin^{S_0})$, where
\[
\bA_\fin^{S_0}:=\{(\alpha_v)_{v<\infty}\in \bA_\fin \mid  \alpha_p= 1 \quad  \forall \, p\in S_0 \}.
\]
Let $\lambda_j$ and $\lambda_j(\alpha)$ denote the eigenvalue of $\phi_j$ for $\Delta$  and  $T_{K_0\alpha K_0}$ respectively, so that  
\[
\Delta\phi_j=\lambda_j\phi_j \quad \text{and} \quad T_{K_0\alpha K_0}\phi_j=\lambda_j(\alpha)\phi_j, \qquad \alpha\in H(\bA_\fin^{S_0}).
\]
Set $\mu_j:=\sqrt{\lambda_j}$.
Our goal is to study the asymptotics of the sum
\bq \label{eq:b}
\sum_{\mu_j\leq \mu } \lambda_j(\alpha) ,\qquad \alpha\in H(\bA_\fin^{S_0}),
\eq
of Hecke eigenvalues with respect to the spectral parameter $\mu\in \R_{>0}$. For this, let $1\leq l\leq c_H$ and set
\bq \label{eq:20190622}
\Gamma_l :=H(\Q)\cap x_lK_0x_l^{-1},
\eq
where $c_H$ and $x_l$ are as in \eqref{eq:d}, and we regard $H(\Q)$ as a subgroup of $H(\bA_\fin)$ via the diagonal embedding.
Then, one gets a diffeomorphism
\[
M := \coprod_{l=1}^{c_H} \Gamma_l\bsl G \cong \bigsqcup_{l=1}^{c_H} \Gamma_l\bsl G \cdot x_l \cong H(\Q)\bsl H(\bA)/K_0,
\]
which defines an isomorphism from $\L^2(M)$ to $\L^2(H(\Q)\bsl H(\bA)/K_0)$ given by
the mapping 
\begin{gather*}
\L^2(M)\ni \varphi_M\mapsto \varphi\in \L^2(H(\Q)\bsl H(\bA)/K_0), \\
\varphi(\gamma x_l g_\infty k_0):=\varphi_M(g_\infty,l) , \quad \gamma\in H(\Q),\;\; g_\infty\in G, \;\; k_0\in K_0.
\end{gather*}
\begin{lem}\label{lem:2}
For any element $\alpha$ in $H(\bA_\fin)$, there exist elements $\alpha_{j,k,m}\in H(\Q)$  with $1\leq m\leq c_{j,k}$ such that
\bq \label{eq:c}
H(\Q)\cap x_j K_0\alpha^{-1} K_0 x_k^{-1} = \bigsqcup_{m=1}^{c_{j,k}} \Gamma_j \alpha_{j,k,m} \Gamma_k.
\eq
\end{lem}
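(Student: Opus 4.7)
The plan is to show first that the set $H(\Q)\cap x_j K_0\alpha^{-1}K_0 x_k^{-1}$ is invariant under left multiplication by $\Gamma_j$ and right multiplication by $\Gamma_k$, so that it is automatically a (disjoint) union of $(\Gamma_j,\Gamma_k)$-double cosets, and then to prove that this union is finite. The $\Gamma_j$-invariance follows directly from the definition $\Gamma_j=H(\Q)\cap x_j K_0 x_j^{-1}$: any $\gamma\in\Gamma_j$ can be written as $\gamma=x_j k x_j^{-1}$ with $k\in K_0$, so $\gamma\cdot x_j K_0\alpha^{-1}K_0 x_k^{-1}=x_j(kK_0)\alpha^{-1}K_0 x_k^{-1}=x_jK_0\alpha^{-1}K_0 x_k^{-1}$, and moreover $\gamma\cdot H(\Q)=H(\Q)$ under the diagonal embedding. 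The argument for right $\Gamma_k$-invariance is identical. Consequently, if the intersection is non-empty, one may pick representatives $\alpha_{j,k,m}\in H(\Q)\cap x_j K_0\alpha^{-1}K_0 x_k^{-1}$, one per double coset, and obtain the disjoint decomposition \eqref{eq:c}. If it is empty, we set $c_{j,k}=0$ and there is nothing to prove.

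The main step is then finiteness. Since $K_0$ is open and $K_0\alpha^{-1}K_0$ is compact in $H(\bA_\fin)$, the quotient $K_0\alpha^{-1}K_0/K_0$ is a finite set. The plan is to produce a map
\[
\psi\colon \bigl(H(\Q)\cap x_j K_0\alpha^{-1}K_0 x_k^{-1}\bigr)/\Gamma_k \;\longrightarrow\; x_j(K_0\alpha^{-1}K_0)/K_0,\qquad \beta\Gamma_k\longmapsto \beta x_k K_0,
\]
and to show it is well-defined and injective. Well-definedness holds because for $\gamma\in\Gamma_k=H(\Q)\cap x_kK_0 x_k^{-1}$ one has $x_k^{-1}\gamma x_k\in K_0$, so $\beta\gamma x_k=\beta x_k\cdot(x_k^{-1}\gamma x_k)\in\beta x_kK_0$. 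For injectivity, if $\beta_1 x_kK_0=\beta_2 x_k K_0$ with $\beta_1,\beta_2\in H(\Q)$, then $\beta_2^{-1}\beta_1\in H(\Q)\cap x_kK_0 x_k^{-1}=\Gamma_k$, hence $\beta_1\Gamma_k=\beta_2\Gamma_k$. Since the target of $\psi$ is finite, the number of right $\Gamma_k$-cosets in $H(\Q)\cap x_j K_0\alpha^{-1}K_0 x_k^{-1}$ is finite, and a fortiori the number $c_{j,k}$ of $(\Gamma_j,\Gamma_k)$-double cosets is finite.

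I do not anticipate any real obstacle here: the whole argument is formal, relying only on the openness and compactness properties of $K_0$ and $K_0\alpha^{-1}K_0$ in $H(\bA_\fin)$, together with the definition of the $\Gamma_l$. No appeal to strong approximation or class-number finiteness is needed for this particular statement, since \eqref{eq:d} and the conditions (C1)--(C3) were only used to set up the subgroups $\Gamma_l$.
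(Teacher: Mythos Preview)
Your proof is correct, but it proceeds along a different line from the paper's. You argue abstractly: the intersection is bi-invariant under $(\Gamma_j,\Gamma_k)$, hence a union of double cosets, and finiteness follows from the injection $\beta\Gamma_k\mapsto\beta x_kK_0$ into the finite set $x_j(K_0\alpha^{-1}K_0)/K_0$. The paper instead uses the class-number decomposition \eqref{eq:d} directly: writing $x_jK_0\alpha^{-1}K_0=\bigsqcup_{m=1}^R\gamma_{jm}x_{n_m}K_0$ with $\gamma_{jm}\in H(\Q)$, it checks that exactly those $m$ with $x_{n_m}=x_k$ contribute, and that the resulting $\Gamma_j\gamma_{jm}\Gamma_k$ fill out the intersection. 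Your route is cleaner and, as you note, avoids any appeal to \eqref{eq:d}. The paper's route, on the other hand, gives an explicit parametrization of the cosets via the decomposition of $x_jK_0\alpha^{-1}K_0$ into $\gamma x_lK_0$'s, which feeds directly into the proof of the next lemma (Lemma~\ref{lem:relation}), where one needs the one-to-one correspondence between cosets $K_0\bsl K_0\alpha^{-1}K_0$ and $\sqcup_{j,m}\Gamma_j\bsl\Gamma_j\alpha_{j,k,m}\Gamma_k$.
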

\begin{proof}
By \eqref{eq:d}, there obviously exist $\gamma_{jm}\in H(\Q)$, $R\in \N_*$, $1\leq n_m \leq c_H$ such that $x_j K_0\alpha^{-1} K_0 =\bigsqcup_{m=1}^{R} \gamma_{jm} x_{n_m}K_0$. For this reason, it is sufficient to show that 
\[
H(\Q)\cap x_j K_0\alpha^{-1} K_0 x_k^{-1} = \bigcup_{1\leq m\leq R,\; x_{n_m}=x_k} \Gamma_j \gamma_{jm} \Gamma_k.
\]
If  $x_{n_m}=x_k$, one gets $\gamma_{jm}\in x_j K_0 \alpha^{-1} K_0 x_k^{-1}$,  hence  $\Gamma_j \gamma_{jm} \Gamma_k \subset x_j K_0\alpha^{-1} K_0 x_k^{-1}$. It follows that $H(\Q)\cap x_l K_0\alpha^{-1} K_0 x_m^{-1} \supset \bigcup_{1\leq j\leq R,\; x_{n_j}=x_m} \Gamma_l \gamma_{lj} \Gamma_m$.
Next, we suppose that  $\gamma\in H(\Q)\cap x_j K_0\alpha^{-1} K_0 x_k^{-1}$.
For some $1\leq j\leq R$ and $k_0\in K_0$, one has $\gamma x_k= \gamma_{jm} x_{n_m} k_0$.
This implies  $x_{n_m}=x_k$ by \eqref{eq:d}, and therefore  $\gamma=\gamma_{jm} x_k k_0 x_k^{-1}\in \gamma_{jm}\Gamma_m$.
Thus, we obtain $H(\Q)\cap x_j K_0\alpha^{-1} K_0 x_k^{-1} \subset \bigcup_{1\leq m\leq R,\; x_{n_m}=x_k} \Gamma_j \gamma_{jm} \Gamma_k$, and the assertion follows.
\end{proof}

\begin{lem}\label{lem:relation}
Let $H(\bA_\fin) \ni \alpha\equiv(\alpha_{j,k,m})_{1\leq j,k\leq c_H,\, 1\leq m\leq c_{j,k}}$ be as in  Lemma \ref{lem:2}. In terms of the isomorphism  $\L^2(M)\simeq  \L^2(H(\Q)\bsl H(\bA)/K_0)$,  $T_{K_0\alpha K_0}$ coincides with  the Hecke operator $T_\alpha$ defined in Section \ref{sec:specasymphecke}, and 
\bq\label{eq:27.1.2020}
T_{K_0\alpha K_0}\varphi=T_\alpha \varphi_M , \quad  \text{where} \;\;  (T_\alpha \varphi_M)(g,k):= \sum_{j=1}^{c_H}\sum_{m=1}^{c_{j,k}} ( T_{\Gamma_j\alpha_{j,k,m}\Gamma_k}\varphi_M)(g,j).
\eq
\end{lem}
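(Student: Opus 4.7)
The plan is to verify \eqref{eq:27.1.2020} by pointwise evaluation. Under the isomorphism $\L^2(M) \simeq \L^2(H(\Q) \bsl H(\bA)/K_0)$, it suffices to check
$$(T_{K_0 \alpha K_0} \varphi)(x_k g_\infty) = (T_\alpha \varphi_M)(g_\infty, k)$$
for every $g_\infty \in G$ and $k \in \{1, \dots, c_H\}$. Expanding the left-hand side by the definition of the adelic Hecke operator, and exploiting that $g_\infty \in H(\R)$ commutes inside $H(\bA)$ with every $\beta \in H(\bA_\fin)$, one obtains
$$(T_{K_0 \alpha K_0} \varphi)(x_k g_\infty) = \sum_{\beta \in K_0 \alpha K_0/K_0} \varphi(x_k \beta \, g_\infty).$$

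The core of the argument is the construction, via the class decomposition \eqref{eq:d}, of a canonical bijection
$$\Phi: K_0 \alpha K_0/K_0 \,\xrightarrow{\sim}\, \bigsqcup_{l=1}^{c_H} \bigsqcup_{m=1}^{c_{l,k}} \Gamma_l \bsl \Gamma_l \alpha_{l,k,m} \Gamma_k.$$
Given $\beta K_0$, the element $x_k \beta \in H(\bA_\fin)$ lies in $H(\Q) x_l H(\R) K_0$ for a unique index $l = l(\beta)$, so one writes $x_k \beta = \delta \, x_l \, h_\infty \, k_0$ with $\delta \in H(\Q)$, $h_\infty \in G$, $k_0 \in K_0$. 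Since the left-hand side has trivial archimedean component, $h_\infty = \delta_\infty^{-1}$, while comparing finite adelic parts gives $\delta^{-1}_\fin = x_l k_0 \beta^{-1} x_k^{-1} \in x_l K_0 \alpha^{-1} K_0 x_k^{-1}$. Lemma \ref{lem:2} then forces $\delta^{-1} \in \bigsqcup_m \Gamma_l \alpha_{l,k,m} \Gamma_k$. The class $(l(\beta), \Gamma_l \delta^{-1})$ is well defined because $\delta$ is unique modulo right multiplication by $\Gamma_l$, which follows from the defining relation $\Gamma_l = H(\Q) \cap x_l K_0 x_l^{-1}$ in \eqref{eq:20190622}. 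Conversely, to any representative $\eta$ of a coset in $\Gamma_l \bsl \Gamma_l \alpha_{l,k,m} \Gamma_k$ one attaches $\beta(\eta) := x_k^{-1} \eta^{-1}_\fin x_l \in K_0 \alpha K_0$, and a short direct check based on Lemma \ref{lem:2} shows that $\Phi$ and $\eta \mapsto \beta(\eta) K_0$ are mutually inverse.

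For $\eta$ as above with $\beta = \beta(\eta)$, the finite adelic identity $x_k \beta = \eta^{-1}_\fin x_l$ combined with left multiplication by the diagonally embedded $\eta \in H(\Q)$ yields the equality $\eta \cdot x_k \beta \, g_\infty = x_l \cdot \eta_\infty g_\infty$ in $H(\bA)$. Invoking the $H(\Q)$-invariance of $\varphi$ together with the defining correspondence $\varphi(x_l g) = \varphi_M(g, l)$ gives $\varphi(x_k \beta g_\infty) = \varphi_M(\eta_\infty g_\infty, l)$. Summation over $\beta$ through $\Phi$ then produces
$$(T_{K_0 \alpha K_0} \varphi)(x_k g_\infty) = \sum_{l=1}^{c_H} \sum_{m=1}^{c_{l,k}} \sum_{\eta \in \Gamma_l \bsl \Gamma_l \alpha_{l,k,m}\Gamma_k} \varphi_M(\eta_\infty g_\infty, l),$$
which matches $(T_\alpha \varphi_M)(g_\infty, k)$ by the definition of $T_\alpha$ from Section \ref{sec:specasymphecke}. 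The main technical obstacle is the consistent handling of inversions and of left/right coset conventions in the definition of $\Phi$; every such verification reduces to the identity \eqref{eq:20190622}.
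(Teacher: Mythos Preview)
Your argument is correct and follows essentially the same route as the paper's proof: both reduce the identity to a bijection between $K_0\alpha K_0/K_0$ (equivalently $K_0\bsl K_0\alpha^{-1}K_0$) and $\bigsqcup_{l,m}\Gamma_l\bsl\Gamma_l\alpha_{l,k,m}\Gamma_k$, obtained by writing $x_k\beta$ in the class decomposition \eqref{eq:d} and invoking $\Gamma_l=H(\Q)\cap x_lK_0x_l^{-1}$, and then check that the individual summands agree via the $H(\Q)$-invariance of $\varphi$. Your version is simply more explicit than the paper's, which asserts the bijection as ``obvious'' from the coincidence of left $x_lK_0x_l^{-1}$-equivalence and left $\Gamma_l$-equivalence on $H(\Q)\cap x_lK_0\alpha^{-1}K_0x_k^{-1}$.
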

\begin{proof}
For $x_k$ and $\beta\in K_0\alpha K_0/K_0$, there exists an element $x_j$ such that $x_j \beta K_0\cap H(\Q)x_k K_0\neq \emptyset$ by \eqref{eq:d}.
Hence, one has
\[
\varphi(g_\infty x_j \beta)=\varphi((\gamma)_\fin g_\infty x_k ) = \varphi((\gamma^{-1})_\infty g_\infty x_k ) 
\]
for some $\gamma\in H(\Q)$, where $(\gamma)_\fin$ (resp. $(\gamma)_\infty$) denotes the embedding of $\gamma$ into $H(\bA_\fin)$ (resp. $H(\R)$). 
For this reason, we have only to prove the one-to-one correspondence between the left cosets of $K_0\bsl K_0\alpha^{-1} K_0$ and the left cosets of  $\sqcup_{j=1}^{c_H}\sqcup_{m=1}^{c_{j,k}}\Gamma_j\bsl \Gamma_j \alpha_{j,k,m} \Gamma_k$, but this is obvious because the left $x_jK_0x_j^{-1}$-equivalence coincides with the left $\Gamma_j$-equivalence in $H(\Q)\cap x_j K_0\alpha^{-1} K_0 x_k^{-1}$.
\end{proof}

We can now state the first main result of this paper. 
Set
\[
p_{S}:=\prod_{p\in S}p \quad  \text{and} \quad K_S:=\prod_{p\in S}K_p .  
\]
For each $\alpha\in H(\Q_S)$, write $\|\alpha\|_S\leq \kappa$ if the characteristic function of $K_S\alpha K_S$ belongs to $\mathcal{H}^\mathrm{ur}_\kappa(H(\Q_{S}))$.
\begin{thm}[\bf Asymptotic distribution of Hecke eigenvalues]\label{thm:main}
Let $\{ \phi_j \}_{j\in\N}$ be an orthonormal basis of $\L^2(H(\Q)\bsl H(\bA)/K_0)$  as above and $d:=\dim H(\R)$.
Then there exists a constant $0<c<d+N(N+1)$ such that for any finite set $S$ of primes in the complement of  $S_0$ and any $\alpha\in H(\Q_S)$ with $\|\alpha\|_S\leq \kappa$ one obtains
\[
\sum_{\mu_j\leq \mu } \lambda_j(\alpha) = \delta_\alpha \frac{\vol(M) \, \varpi_d }{(2\pi)^d} \mu^d  +  O(\mu^{d-1}\, p_S^{c\kappa}),
\]
where $\delta_\alpha:=1$ if $\alpha\in K_S$, $\delta_\alpha:=0$ otherwise, $\vol(M)$ denotes the Riemannian volume of $M$, and $\varpi_d:=\pi^{\frac d 2}/\Gamma(1+\frac d 2)$ means the volume of the unit $d$-sphere.
\end{thm}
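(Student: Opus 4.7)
The plan is to reduce the assertion to the spectral kernel asymptotics established in Section~\ref{sec:2} and then handle the $p$-adic inputs separately. First I would invoke Lemma~\ref{lem:relation} to translate the adelic Hecke operator $T_{K_0\alpha K_0}$ into the Hecke operator $T_\alpha$ on $\L^2(M)$ associated with the tuple $(\alpha_{j,k,m})$ supplied by Lemma~\ref{lem:2}, with $M=\coprod_l \Gamma_l\bsl G$ and the operator $P_0 = \Delta+2\mathcal{C}_K$ (or any positive elliptic left-invariant surrogate) playing the role of $P_0$ in Section~\ref{sec:2.1}. Since the orthonormal basis $\{\phi_j\}$ simultaneously diagonalises $\Delta$ and $T_{K_0\alpha K_0}$, integrating the Schwartz kernel over the diagonal gives
\bq\label{eq:plan1}
\int_M K_{T_\alpha\circ\widetilde s_\mu}(x,x)\,dx=\sum_{j\geq 0}\rho(\mu-\mu_j)\,\lambda_j(\alpha),
\eq
so the whole question is to expand the left-hand side in $\mu$ and then pass from smoothed sums to sharp cut-off sums.

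Next I would apply Lemma~\ref{lem:non-equiv} pointwise and integrate. The identity contribution is $\delta_{\alpha,l}\int_{\Gamma_l\bsl G}K_{\widetilde s_\mu}(x,x)\,dx$; combined with the pointwise expansion \eqref{eq:28.3.2018} for $K_{\widetilde s_\mu}(x,x)$ and the fact that $S^\ast_x M$ is the unit sphere in $T^\ast_xM$, so that $\vol S^\ast_xM=\varpi_d$, this produces the leading term
\[
\delta_{\alpha,l}\,\frac{\vol(M)\,\varpi_d}{(2\pi)^d}\,\mu^d+O(\mu^{d-1})
\]
after integration. I would then check that the index $l$ dependence disappears, namely that the aggregate factor $\sum_l \delta_{\alpha,l}$ agrees with $\delta_\alpha$ defined by $\alpha\in K_S$: by \eqref{eq:d} and \eqref{eq:20190622} the double coset $K_0\alpha K_0$ contains a representative in $K_0$ precisely when $\alpha\in K_S$, and in that case the contributing $\alpha_{l,l,m}$ that lie in $\Gamma_l$ sum (across $l$) to $1$ on each component; otherwise no $\alpha_{l,l,m}$ belongs to $\Gamma_l$ and $\delta_{\alpha,l}=0$.

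The main obstacle is the control of the remainder in Lemma~\ref{lem:non-equiv} uniformly in $\alpha$. I would need two $p$-adic inputs, both of which are standard but have to be tracked carefully. The first is an estimate on the total number of Hecke points,
\[
\sum_{j,m}|\Gamma_j\bsl\Gamma_j\alpha_{j,k,m}\Gamma_k| \;=\; |K_0\alpha K_0/K_0| \;\ll\; p_S^{c_1\kappa},
\]
which follows from the description of $K_p\omega(p)K_p/K_p$ in the Cartan decomposition of $G_p$ and the hypothesis $\|\alpha\|_S\leq\kappa$, with $c_1$ depending on the $\Q$-rank and the root system of $H$ (in particular bounded by the embedding dimension $N(N+1)$). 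The second, more delicate, is a uniform lower bound on the minimal distance from non-trivial Hecke points to the diagonal,
\[
\min_{1\neq\beta\in\Gamma_l\bsl\Gamma_l\alpha_{l,l,m}\Gamma_l}\dist(\Gamma_l\beta g,\Gamma_lg)\;\gg\; p_S^{-c_2\kappa},
\]
which should be deduced from the fact that the matrix entries of $\beta\in H(\Q)\cap x_lK_0\alpha K_0x_l^{-1}$ have denominators bounded by a power of $p_S^\kappa$, so that if $\beta g$ is close to $g$ in the Archimedean metric then $\beta$ must be an integral element close to the identity, forcing $\beta=1$ for $p_S$ large enough relative to the injectivity radius. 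This gives $D(\alpha,x)\ll p_S^{c_2(d-1)\kappa/2}$. Together with the counting bound the two inputs yield
\[
\int_M K_{T_\alpha\circ\widetilde s_\mu}(x,x)\,dx-\delta_\alpha\,\frac{\vol(M)\,\varpi_d}{(2\pi)^d}\,\mu^d \;\ll\; \mu^{d-1}\,p_S^{c\kappa}
\]
for some constant $c<d+N(N+1)$.

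Finally I would pass from the smoothed sum \eqref{eq:plan1} to the sharp cut-off sum $\sum_{\mu_j\leq\mu}\lambda_j(\alpha)$. For this I would invoke a standard Tauberian argument in the spirit of H\"ormander and Duistermaat--Guillemin: the function $\hat\rho$ is compactly supported near $0$, $\rho$ is rapidly decaying and positive, and the pointwise asymptotic together with a standard positivity/non-negativity input (applied to $T_\alpha\circ T_\alpha^\ast\circ\widetilde s_\mu$ to absorb the non-definiteness of $T_\alpha$) upgrades the smoothed asymptotic with remainder $O(\mu^{d-1}p_S^{c\kappa})$ to the sharp one with the same remainder order. The passage from smoothed to sharp loses only a factor in the constants and preserves the $p_S^{c\kappa}$ dependence, which completes the proof. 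The hardest technical step is genuinely the uniform distance bound $D(\alpha,x)\ll p_S^{c_2(d-1)\kappa/2}$, since all other ingredients follow routinely from the kernel asymptotics of Section~\ref{sec:2}.
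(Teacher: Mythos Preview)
Your plan matches the paper's proof closely: translate via Lemmas~\ref{lem:2} and~\ref{lem:relation}, apply Lemma~\ref{lem:non-equiv} pointwise, integrate over $M$, bound the number of Hecke points and the minimal off-diagonal distance by powers of $p_S^\kappa$, and unsmooth. The two $p$-adic inputs you identify are precisely the ones the paper uses (denominator bound $\mathrm{lcm}(\gamma)\ll p_S^\kappa$ giving $\dist\gg p_S^{-\kappa}$, and the lattice-point count in $\SL(N,\Q)\cap\M(N,c_1^{-1}p_S^{-\kappa}\Z)$ giving the exponent $N(N+1)$).

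Two corrections. First, a minor slip: $\vol S^*_xM = d\,\varpi_d$, not $\varpi_d$ (see \eqref{eq:26.9.2019}); correspondingly the smoothed trace $\int_M K_{\widetilde s_\mu}(x,x)\,dx$ has leading order $\mu^{d-1}$, and the $\mu^d$ appears only after a further integration in $\mu$ as in \eqref{eq:24.5.2019}. Second, and this is the only substantive deviation, your unsmoothing device via $T_\alpha T_\alpha^\ast$ does not directly yield what you want: a Tauberian argument applied to the positive operator $T_\alpha T_\alpha^\ast$ gives asymptotics for $\sum_{\mu_j\leq\mu}|\lambda_j(\alpha)|^2$, not for $\sum_{\mu_j\leq\mu}\lambda_j(\alpha)$, and passing from one to the other costs an extra step. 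The paper instead uses the elementary pointwise bound
\[
|\lambda_j(\alpha)|\leq |K_0\alpha K_0/K_0|\ll p_S^{c_6\kappa},
\]
together with the Weyl-law window count $\#\{j:|\mu_j-\mu|\leq K\}\ll\mu^{d-1}$, to control the boundary contribution in the Duistermaat--Guillemin unsmoothing directly. This is simpler and gives explicit control of the $p_S$-exponent; use it in place of the $T_\alpha T_\alpha^\ast$ detour.
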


\begin{proof}
The assertion is essentially a consequence of Lemma \ref{lem:non-equiv}. As a consequence of the two previous lemmata, any  $\alpha\in H(\bA_\fin)$ can be identified with a tuple $(\alpha_{j,k,m})_{1\leq j,k\leq c_H,\, 1\leq m\leq c_{j,k}}$ up to $(\Gamma_j,\Gamma_k)$-equivalence via the decomposition \eqref{eq:c}, and be associated to a Hecke operator $T_\alpha$ on $\L^2(M)$ as in  \eqref{eq:27.1.2020}. By Lemma \ref{lem:2} we can assume for each pair $(j,k)$,  that  $\Gamma_j\alpha_{j,k,m_1}\Gamma_k\neq\Gamma_j\alpha_{j,k,m_2}\Gamma_k$ if $m_1\neq m_2$, where  $1\leq m_1,m_2\leq c_{j,k}$. Also, assume that either of the conditions 
\begin{itemize}
\item[(i)] $\Gamma_l$ lies in $\sqcup_{m=1}^{c_{l,l}}\Gamma_l\alpha_{l,l,m}\Gamma_l$ for every $l$,
\item[(ii)] $\Gamma_l$ does not lie in  $\sqcup_{m=1}^{c_{l,l}}\Gamma_l\alpha_{l,l,m}\Gamma_l$ for any $l$,
\end{itemize}
holds, and set $\delta_\alpha':=1$ if (i)   and $\delta_\alpha':=0$ if (ii) is fulfilled. Then, since the doble cosets $\Gamma_l\alpha_{l,l,m}\Gamma_l$ are disjoint in the present case, Lemma \ref{lem:non-equiv} implies  for each $x=(g,l)\in M$ that
\begin{multline}\label{eq:non-equiv}
  K_{T_\alpha\circ \widetilde s_\mu} (x,x) - \delta_\alpha' \, K_{ \widetilde s_\mu}(x,x)  \\
=  O\Big (\mu^{(d-1)/2} \, D(\alpha,x) \,  \sum_{m=1}^{c_{l,l}} |\Gamma_l\bsl \Gamma_l\alpha_{l,l,m}\Gamma_l|+\mu^{-\infty}\,  \sum_{j=1}^{c_H} \sum_{m=1}^{c_{j,l}} |\Gamma_j\bsl \Gamma_j\alpha_{j,l,m}\Gamma_l|\Big ).
\end{multline}
 Denote by $\mathrm{lcm}(\gamma)$ the least common multiple of denominators of components of  a matrix $\gamma\in  H(\Q)\subset \SL(N,\Q)$, and consider  an element $\alpha\in H(\Q_S)$ with $\|\alpha\|_S\leq \kappa$. By  Lemma \ref{lem:2}, there is a constant $c_1\in\N_*$ such that $\mathrm{lcm}(\gamma) < c_1 p_S^{\kappa}$ holds for any $\gamma\in \sqcup_{m=1}^{c_{j,k}}\Gamma_j \alpha_{j,k,m}\Gamma_k$, $1\leq j,k\leq c_H$.
Hence, for some constant $c_2$ and $c_3=N(N+1)$ one has
\bq
\label{eq:9.6.19}
\sum_{j=1}^{c_H} \sum_{m=1}^{c_{j,k}} \sharp(\Gamma_j\bsl \Gamma_j \alpha_{j,k,m} \Gamma_k)< c_2 \,  p_S^{c_3\kappa},
\eq
because $\sqcup_{m=1}^{c_{j,k}}\Gamma_j \alpha_{j,k,m}\Gamma_k$ is contained in $\SL(N,\Q)\cap \M(N,c_1^{-1}p_S^{-\kappa}\Z)$. 
Furthermore, for any $x=(g,k)\in M=G\times \{1,\dots,c_H\}$ and any $\gamma\in \sqcup_{m=1}^{c_{j,k}}\Gamma_j \alpha_{j,k,m}\Gamma_k $, $1\leq j,k\leq c_H$, one has $\dist(\Gamma_k \gamma g,\Gamma_k g)>c_4p_S^{-\kappa}$ for some constant $c_4$ unless  $\gamma=1$, because $M$ is compact and the distance $\dist$ on $M$ is locally equivalent to the distance induced by the Euclidean distance on $\M(N,\R)$, see \cite[Section 2]{RW}.
Therefore, setting $c_5=c_3+(d-1)/2$ Equation \eqref{eq:non-equiv} would imply 
\bq \label{eq:proof}
  K_{T_\alpha\circ \widetilde s_\mu} (x,x) - \delta_\alpha \cdot K_{ \widetilde s_\mu}(x,x)  =  O(\mu^{(d-1)/2}p_S^{c_5\kappa})  
\eq
provided that we prove the necessary conditions 
\begin{enumerate}
\item[(I)] If  $\alpha\in K_S$, $1$ belongs to $\sqcup_{m=1}^{c_{j,j}}\Gamma_j\alpha_{j,j,m}\Gamma_j$ for every $j$,
\item[(II)] If $\alpha\not\in K_S$, $1$ does not belong to $\sqcup_{m=1}^{c_{j,j}}\Gamma_j\alpha_{j,j,m}\Gamma_j$ for any $j$.
\end{enumerate}
The condition (I) is obvious by Lemma \ref{lem:2}, so  suppose that $1\in \sqcup_{m=1}^{c_{j,j}}\Gamma_j\alpha_{j,j,m}\Gamma_j$ for some $j$. This means that $1\in H(\Q) \cap x_j K_0\alpha K_0 x_j^{-1}$ and in particular $1\in K_0\alpha K_0$ together with $\alpha\in K_S$. Hence (II) holds by  contraposition, and  \eqref{eq:proof} is proved. Integrating this equality over  $x$  and $\mu$ we arrive at
\bq
\label{eq:24.5.2019}
\int_{-\infty}^\mu \sum_{j=0}^\infty \rho(t-\mu_j) \lambda_j(\alpha) \d t - \delta_\alpha \frac{ \varpi_d \, \vol M}{(2\pi)^d } \mu^d = O\big (\mu^{(d+1)/2}p_S^{c_5\kappa}+\mu^{d-1}\big ),
\eq
where we took into account \eqref{eq:29.5.2017},  \eqref{eq:28.3.2018},  and \eqref{eq:4}, together with  the fact that $K_{ \widetilde s_\mu}(x,y)$ is rapidly decreasing as $\mu \to -\infty$. Besides, in the present case we have $S^\ast_xM=\mklm{(x,\xi) \in T^\ast_x(M) \mid |\xi|_x =1}$, so that   
\bq
\label{eq:26.9.2019}
\int_M\vol S^\ast_x(M)\, \d x=d \int_M\vol B^\ast_x(M)\, \d x =d\,  \varpi_d \, \vol M, 
\eq
where $B^\ast_xM:=\mklm{(x,\xi) \in T^\ast_x(M) \mid |\xi|_x \leq 1}$. 
 Now, for each eigenfunction $\phi_j$ we can choose a point $y_j\in H(\Q)\bsl H(\R)/K_0$ such that $|\phi_j(y_j)|=\max_{x\in H(\Q)\bsl H(\R)/K_0}|\phi_j(x)|$, yielding the  trivial bound
\[
|\lambda_j(\alpha)|=\sum_{\beta\in K_0\alpha K_0/K_0}|\phi_j(y_j\beta)|/|\phi_j(y_j)|\leq \sharp(K_0\alpha K_0/K_0)
\]
uniformly in $j$. Furthermore, by \cite[Lemma 2.13]{ST} one has $\sharp(K_0\alpha K_0/K_0)\ll p_S^{c_6\kappa}$  for $c_6=d+N-1+\frac{1}{2}N(N+1)$. With the arguments in \cite[Proof of Corollary 2.5]{duistermaat-guillemin75} one therefore deduces for any $K>0$ the estimate  
\begin{align*}
\int_{-\infty}^\mu \sum_{j=0}^\infty \rho(t-\mu_j) \lambda_j(\alpha) \d t&=\sum_{\mu_j\leq \mu-K} \lambda_j(\alpha)  \int_{-\infty}^\infty  \rho(t-\mu_j)   \d t+O(\mu^{d-1} p_S^{c_6\kappa}).
\end{align*}
 Since  $\hat \rho(0)=\int \rho(t) \d t=1$, the assertion of the theorem follows from \eqref{eq:24.5.2019}, since $d \geq 3$. 
\end{proof}

Following Shin and Templier \cite{ST}, we  now define a certain family of automorphic representations of $H$ depending on $\mu$.
Fix an automorphic representation $\pi$ of $H$. In view of $H(\bA)=H(\R)\times H(\bA_\fin)$, one has the decompositions $\pi=\pi_\infty\otimes\pi_\fin$ and $V_\pi=V_{\pi_\infty}\otimes V_{\pi_\fin}$, where $\pi_\infty\in\widehat{H(\R)}$ and $\pi_\fin\in\widehat{H(\bA_\fin)}$, and for each eigenfunction $\phi_\infty$ in $V_{\pi_\infty}$ of $\pi_\infty(\Delta)$ we write 
\bqn
\pi_\infty(\Delta) \phi_\infty=\lambda_{\phi_\infty} \phi_\infty.
\eqn
We can then define the finite dimensional subspace
\[
V_{\pi_\infty}^{\leq \mu}:=\langle \phi_\infty \in V_{\pi_\infty} \mid \text{$\phi_\infty$ is an eigenfunction of $\pi_\infty(\Delta)$ and $\sqrt{\lambda_{\phi_\infty}}\leq \mu$}\rangle.
\]
Note that $\dim V_{\pi_\infty}^{\leq \mu}>0$ means that the Casimir eigenvalue of $\pi_\infty$ is less than or equal to $\mu^2$.
In addition, we denote the subspace of $K_0$-fixed vectors in $V_{\pi_\fin}$ by 
\[
V_{\pi_\fin}^{K_0}:=\eklm{ u\in V_{\pi_\fin} \mid \pi_\fin(k_0)u=u \;\; \forall \, k_0\in K_0}.
\]
 Now, define   $\F=\F(\mu)$ as the finite multi-set consisting of those automorphic representations $\pi\in\widehat{H(\bA)}$  with $m_\pi>0$ for which  the positive integer
\[
a_\F(\pi):=m_\pi \, \dim V^{\leq \mu}_{\pi_\infty} \, \dim V_{\pi_\fin}^{K_0}
\]
is strictly positive,  where each such $\pi$ appears in $\F$ with the multiplicity $a_\F(\pi)$. As an immediate consequence of  Theorem \ref{thm:main} we now obtain the following 
\begin{cor}[\bf Asymptotic trace formula]\label{cor:main}
In the setting of Theorem \ref{thm:main} there exists a  constant $c'>0$ such that  for each finite set $S$ of primes outside $S_0$ and each $f_S\in \mathcal{H}^\mathrm{ur}_\kappa(H(\Q_{S}))$,
\[
\sum_{\pi\in \F(\mu)} \Tr\pi_S(f_S)=f_S(1)\cdot  \frac{\vol(M) \, \varpi_d}{(2\pi)^d} \mu^d +  O(\mu^{d-1}\, p_S^{c'\kappa} \, \|f_S\|_\infty),
\]
where $\|f_S\|_\infty:=\max_{x\in H(\Q_S)} f_S(x)$.
\end{cor}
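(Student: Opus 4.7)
The plan is to derive the corollary directly from Theorem~\ref{thm:main} by expanding $f_S$ in characteristic functions of double cosets and by establishing a spectral identity that rewrites sums of Hecke eigenvalues as traces of representations in $\F(\mu)$. No new analytic input should be required beyond Theorem~\ref{thm:main}; the work is essentially bookkeeping.

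First I would write $f_S = \sum_t c_t \mathbf{1}_{K_S\alpha_t K_S}$, where $\{\alpha_t\}$ runs over a finite set of double-coset representatives with $\|\alpha_t\|_S \leq \kappa$. Since $f_S$ is $K_S$-biinvariant, $c_t = f_S(\alpha_t)$, so $|c_t| \leq \|f_S\|_\infty$. The proof of Theorem~\ref{thm:main} already cites \cite[Lemma 2.13]{ST} to bound the number of such double cosets by $p_S^{c_1\kappa}$ for some constant $c_1$ depending only on $H$ and $N$.

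Next I would establish the spectral identity
\begin{equation*}
\sum_{\mu_j \leq \mu} \lambda_j(\alpha) \;=\; \sum_{\pi \in \F(\mu)} \Tr \pi_S(\mathbf{1}_{K_S\alpha K_S}) \qquad (\alpha \in H(\Q_S)).
\end{equation*}
Two observations are needed. First, the existence of $K_0$-fixed vectors forces $\pi_p$ to be unramified for every $p \in S \subset S_0^c$, so $V_{\pi_p}^{K_p}$ is one-dimensional and $\pi_p(\mathbf{1}_{K_p\alpha_p K_p})$ acts on it as multiplication by the Satake transform $\widehat{\mathbf{1}_{K_p\alpha_p K_p}}(\pi_p)$; with the normalization $\vol(K_p)=1=\vol(K_{S_0})$, the factorization $K_0\alpha K_0 = K_{S_0}\,\prod_{p\notin S\cup S_0}K_p\,(K_S\alpha K_S)$ yields $T_{K_0\alpha K_0}\phi_j = \Tr\pi_{j,S}(\mathbf{1}_{K_S\alpha K_S})\phi_j$. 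In particular, $\lambda_j(\alpha)$ depends only on $\pi_{j,S}$. Second, the number of basis elements $\phi_j$ lying in a given $V_\pi$ with $\mu_j \leq \mu$ is precisely $a_\F(\pi) = m_\pi \dim V_{\pi_\infty}^{\leq \mu} \dim V_{\pi_\fin}^{K_0}$, i.e.\ the multiplicity of $\pi$ in $\F(\mu)$. Together these give the claimed identity.

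With these pieces in place, Theorem~\ref{thm:main} applied to each $\alpha_t$ gives
\begin{equation*}
\sum_{\pi \in \F(\mu)} \Tr\pi_S(\mathbf{1}_{K_S\alpha_t K_S}) \;=\; \delta_{\alpha_t} \frac{\vol(M)\varpi_d}{(2\pi)^d}\mu^d + O(\mu^{d-1} p_S^{c\kappa}),
\end{equation*}
so multiplying by $c_t$ and summing gives the corollary. Only the single $t$ with $\alpha_t \in K_S$ (the double coset $K_S\alpha_t K_S = K_S$) contributes to the main term, with coefficient $c_t = f_S(1)$; all remainders are absorbed by $\sum_t |c_t|\, p_S^{c\kappa} \leq \|f_S\|_\infty\, p_S^{(c+c_1)\kappa}$, and one sets $c' := c + c_1$. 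The only point that is not purely mechanical is the spectral identity of the previous paragraph, which requires tracking the local--global factorization and the unramifiedness of $\pi_p$ for $p\in S$; everything else is linearity plus Theorem~\ref{thm:main}.
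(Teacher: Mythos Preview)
Your argument is correct and follows essentially the same route as the paper: expand $f_S$ in characteristic functions of double cosets, identify the resulting Hecke-eigenvalue sums with the traces over $\F(\mu)$, and apply Theorem~\ref{thm:main} term by term. One small inaccuracy: \cite[Lemma 2.13]{ST} bounds the number of \emph{single} cosets in a fixed double coset, not the number of double cosets with $\|\alpha\|_S\le\kappa$; the paper instead counts the latter via the cocharacter lattice as $|B_{S,\kappa}|\le (c_1'\kappa)^{\operatorname{rank}}$, but your weaker bound $p_S^{c_1\kappa}$ is certainly true and suffices to absorb the sum of error terms.
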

\begin{proof} To begin, note that  $\mathcal{H}^\mathrm{ur}_\kappa(H(\Q_{S}))$ is spanned by the elements $\tau_\omega:=\otimes_{p\in S} \tau_{\omega_p}$, where    $\omega=(\omega_p)_{p\in S}\in B_{S,\kappa}$ and  $B_{S,\kappa}:=\{ (\omega_p)_{p\in S}\in \prod_{p\in S} X_*(A_p) \mid \|\omega_p\|_p\leq \kappa\}$. Next, define $a_\omega:=(\omega_p(p))_{p\in S}\in\prod_{p\in S} A_p$. Then $\| a_\omega \|_S \leq \kappa$, and $\tau_\omega$ can be interpreted as  the characteristic function of $K_S a_\omega K_S$. As a consequence, $f_S\in \mathcal{H}^\mathrm{ur}_\kappa(H(\Q_{S}))$ can be written as $f_S= \sum_{\omega\in B_{S,\kappa}}f_S(a_\omega)\, \tau_\omega$, and  if $K_S\alpha K_S=K_S a_\omega K_S$,   the sum
\bq \label{eq:a}
 \sum_{\pi\in \F} \Tr\pi_S(\tau_\omega)
\eq
coincides with \eqref{eq:b}, where for $\pi=\otimes_v \pi_v$ we set $\pi_S:=\otimes_{p\in S}\pi_p$. Thus, the assertion follows from Theorem \ref{thm:main} in view of  the  bound $|B_{S,\kappa}|\leq  (c_1'\kappa)^{\mathrm{rank}_\Z X_*(A_p)}$ for some suitable $c_1'\in\N_*$. 
\end{proof}

With the preceding asymptotic trace formula, we are able to prove a Plancherel density theorem and a Sato-Tate equidistribution theorem. For this, let us first introduce the relevant measures.
Define   a counting measure on $H(\Q_S)^{\wedge,\mathrm{ur}}$ for the $S$-component of $\F$  by setting 
\bqn
\widehat{m}^\mathrm{count}_{\mu,S}:=\frac{1}{|\F|}\sum_{\pi\in \F} \delta_{\pi_S},
\eqn
where $\delta_{\pi_S}$ denotes the Dirac delta measure at $\pi_S\in H(\Q_S)^{\wedge,\mathrm{ur}}$. We then have the following  

\begin{cor}[\bf Plancherel density theorem]\label{plancherel density}
For any $f_S\in \mathcal{H}^\mathrm{ur}(H(\Q_{S}))$  we have
\[
\lim_{\mu\to\infty} \widehat{m}^\mathrm{count}_{\mu,S}(\widehat {f_S}) = \widehat{m}_S^\mathrm{Pl,ur}(\widehat {f_S}).
\]
\end{cor}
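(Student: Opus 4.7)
The plan is to derive this from the asymptotic trace formula (Corollary~\ref{cor:main}) by a two-pronged application, once to the numerator with $f_S$ and once to the denominator with the unit of the unramified Hecke algebra, and then take the ratio.

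First, I would rewrite the statement spectrally. By construction,
\[
\widehat{m}^{\mathrm{count}}_{\mu,S}(\widehat{f_S})=\frac{1}{|\F(\mu)|}\sum_{\pi\in\F(\mu)}\widehat{f_S}(\pi_S)=\frac{1}{|\F(\mu)|}\sum_{\pi\in\F(\mu)}\Tr\pi_S(f_S),
\]
while the Plancherel formula \eqref{eq:localplan} applied factorwise to $\widehat{m}_S^{\mathrm{Pl,ur}}=\prod_{p\in S}\widehat{m}_p^{\mathrm{Pl,ur}}$ gives $\widehat{m}_S^{\mathrm{Pl,ur}}(\widehat{f_S})=f_S(1)$. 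Hence the claim reduces to
\[
\lim_{\mu\to\infty}\frac{1}{|\F(\mu)|}\sum_{\pi\in\F(\mu)}\Tr\pi_S(f_S)=f_S(1).
\]

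Next, since $f_S$ has compact support in $H(\Q_S)$, there exists a fixed $\kappa\in\N$ with $f_S\in\mathcal{H}^{\mathrm{ur}}_\kappa(H(\Q_S))$, and also $\|f_S\|_\infty<\infty$. Corollary~\ref{cor:main} then yields
\[
\sum_{\pi\in\F(\mu)}\Tr\pi_S(f_S)=f_S(1)\cdot\frac{\vol(M)\,\varpi_d}{(2\pi)^d}\,\mu^d+O_{f_S,S}(\mu^{d-1}),
\]
where the implied constant depends on $f_S$ and $S$ but not on $\mu$. For the denominator I take the specific choice $f_S=\mathbf{1}_{K_S}\in\mathcal{H}^{\mathrm{ur}}_0(H(\Q_S))$. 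For each $\pi\in\F(\mu)$ the finite component $\pi_{\mathrm{fin}}$ has nonzero $K_0$-fixed vectors, and since $S\cap S_0=\emptyset$ each local factor $\pi_p$ for $p\in S$ is unramified with $\dim V_{\pi_p}^{K_p}=1$ by uniqueness of the spherical vector at hyperspecial primes; therefore $\Tr\pi_S(\mathbf{1}_{K_S})=\prod_{p\in S}\dim V_{\pi_p}^{K_p}=1$. Applying Corollary~\ref{cor:main} to this function (with $\mathbf{1}_{K_S}(1)=1$) gives
\[
|\F(\mu)|=\sum_{\pi\in\F(\mu)}\Tr\pi_S(\mathbf{1}_{K_S})=\frac{\vol(M)\,\varpi_d}{(2\pi)^d}\,\mu^d+O_S(\mu^{d-1}).
\]

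Dividing the two asymptotics, the leading coefficients cancel and the error terms are lower order, so
\[
\widehat{m}^{\mathrm{count}}_{\mu,S}(\widehat{f_S})=\frac{f_S(1)\,\mu^d+O_{f_S,S}(\mu^{d-1})}{\mu^d+O_S(\mu^{d-1})}\longrightarrow f_S(1)=\widehat{m}_S^{\mathrm{Pl,ur}}(\widehat{f_S})
\]
as $\mu\to\infty$. There is no real obstacle here beyond bookkeeping: the entire analytic content is concentrated in Corollary~\ref{cor:main}, and the spherical-vector uniqueness at unramified places is what turns the denominator into the volume term. I would also remark that since both $\widehat{m}^{\mathrm{count}}_{\mu,S}$ and $\widehat{m}_S^{\mathrm{Pl,ur}}$ are probability (resp.\ finite) measures on $H(\Q_S)^{\wedge,\mathrm{ur}}$ and the characters $\widehat{f_S}$ with $f_S\in\mathcal{H}^{\mathrm{ur}}(H(\Q_S))$ separate unramified unitary representations, one obtains weak convergence of measures on $H(\Q_S)^{\wedge,\mathrm{ur,temp}}$, which is the standard formulation of the Plancherel density theorem.
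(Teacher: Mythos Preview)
Your proof is correct and follows essentially the same route as the paper: apply Corollary~\ref{cor:main} to the numerator and divide by the asymptotic for $|\F(\mu)|$. The only cosmetic difference is that you obtain the denominator asymptotic by reapplying Corollary~\ref{cor:main} with $f_S=\mathbf{1}_{K_S}$ (using uniqueness of spherical vectors at hyperspecial places), whereas the paper invokes Weyl's law directly from the spectral function asymptotics \eqref{eq:28.3.2018}; these two derivations are equivalent.
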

\begin{proof}
For any $f_S\in \mathcal{H}^\mathrm{ur} (H(\Q_{S}))$, we can choose a constant $\kappa>0$ such that $f_S$ is in $\mathcal{H}^\mathrm{ur}_\kappa (H(\Q_{S}))$.
Corollary \ref{cor:main} implies that 
\[
\frac{(2\pi)^d} {\mu^d \, \vol(M) \, \varpi_d } \sum_{\pi\in \F(\mu)} \widehat{f_S}(\pi_S) = f_S(1) + O(\mu^{-1} p_S^{c' \kappa} \|f_S\|_\infty).
\]
Since integration of   \eqref{eq:28.3.2018} over $x$ and $\mu$ yields Weyl's law $|\F(\mu)|=\# \mklm{j\mid \mu_j \leq \mu}= \frac{\mu^d \, \vol(M) \, \varpi_d }{(2\pi)^d} \mu^d+O(\mu^{d-1})$, the assertion follows by taking the limit $\mu \to \infty$ in the last equality for each $\kappa$ separately. 
\end{proof}

\begin{cor}[\bf Sato-Tate equidistribution theorem]\label{Sato-Tate equiv}
Fix  $\theta\in\mathscr{C}(\Gamma_1)$, and let $\widehat{f}$ be a continuous function on $\widehat{T}_{c,\theta}/ \Omega_{c,\theta}$. By \eqref{eq:ST(5.2)},
 $\widehat{f}$ can be extended to a continuous function $\widehat{f}_p$ on $G_p^{\wedge,\mathrm{ur,temp}}$ for any $p\in \mathcal{V}(\theta)$. 
Let $\{(p_k,\mu_k)\}_{k\geq 1}$ be a sequence in $\mathcal{V}(\theta)\times\R_{>0}$ such that $p_k\to \infty$ and $p_k^l / \mu_k \to 0$ as $k\to \infty$ for any integer $l\geq 1$.
Then
\[
\lim_{k\to\infty} \widehat{m}^\mathrm{count}_{\mu_k,p_k}(\widehat {f}_{p_k}) = \widehat{m}^\mathrm{ST}(\widehat {f})
\]
where we wrote  $\widehat{m}^\mathrm{count}_{\mu_k,p_k}$ for $\widehat{m}^\mathrm{count}_{\mu_k,\{p_k\}}$.
\end{cor}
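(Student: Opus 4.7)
The plan is to combine Corollary \ref{plancherel density} with the weak convergence $\widehat{m}_p^{\mathrm{Pl,ur}} \to \widehat{m}^{\mathrm{ST}}_\theta$ as $p\to\infty$ in $\mathcal{V}(\theta)$ recalled just before the statement. Fix $\epsilon > 0$ and a continuous function $\widehat{f}$ on $\widehat{T}_{c,\theta}/\Omega_{c,\theta}$, and split via the triangle inequality
\[
\bigl|\widehat{m}^{\mathrm{count}}_{\mu_k, p_k}(\widehat{f}_{p_k}) - \widehat{m}^{\mathrm{ST}}(\widehat{f})\bigr| \leq \bigl|\widehat{m}^{\mathrm{count}}_{\mu_k, p_k}(\widehat{f}_{p_k}) - \widehat{m}^{\mathrm{Pl,ur}}_{p_k}(\widehat{f}_{p_k})\bigr| + \bigl|\widehat{m}^{\mathrm{Pl,ur}}_{p_k}(\widehat{f}_{p_k}) - \widehat{m}^{\mathrm{ST}}(\widehat{f})\bigr|.
\]
The second summand tends to $0$ as $k\to\infty$ by the Plancherel-to-Sato-Tate convergence, since the supports of both measures lie in $\widehat{T}_{c,\theta}/\Omega_{c,\theta}$ via \eqref{eq:ST(5.2)}, so both terms are just integrals of the same continuous $\widehat{f}$ against weakly convergent measures on a fixed compact space.

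For the first summand, the natural tool is Corollary \ref{plancherel density}, but it applies only to test functions $\widehat{f_S}$ coming from $f_S \in \mathcal{H}^{\mathrm{ur}}(H(\Q_S))$, whereas a general continuous $\widehat{f}$ is not of this form. To bridge this gap I would invoke Sauvageot's density principle, exactly as in \cite[Appendix A]{ST}: given $\epsilon>0$, there exist $h_\epsilon, h'_\epsilon$ in the unramified Hecke algebra of $G_p$, of fixed truncation depth $\kappa_\epsilon$ independent of $p$, such that $|\widehat{f} - \widehat{h_\epsilon}| \leq \widehat{h'_\epsilon}$ on the tempered spectrum and $\widehat{m}^{\mathrm{Pl,ur}}_p(\widehat{h'_\epsilon}) < \epsilon$ for all $p$ sufficiently large. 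Applying the effective form Corollary \ref{cor:main} to $h_{\epsilon,p_k}$ gives
\[
\bigl|\widehat{m}^{\mathrm{count}}_{\mu_k, p_k}(\widehat{h_{\epsilon,p_k}}) - \widehat{m}^{\mathrm{Pl,ur}}_{p_k}(\widehat{h_{\epsilon,p_k}})\bigr| = O\bigl(\mu_k^{-1} p_k^{c'\kappa_\epsilon} \|h_{\epsilon,p_k}\|_\infty\bigr),
\]
while the dominating $h'_{\epsilon,p_k}$, together with positivity of $\widehat{m}^{\mathrm{count}}_{\mu_k,p_k}$, controls the remaining approximation error by $\epsilon$ plus a similar $O(\mu_k^{-1} p_k^{c'\kappa_\epsilon} \|h'_{\epsilon,p_k}\|_\infty)$ contribution.

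The main obstacle is the uniform control of the approximation as $p_k$ varies: one needs both the truncation depth $\kappa_\epsilon$ of the approximating Hecke elements to be chosen independently of $p$ and the sup-norms $\|h_{\epsilon,p_k}\|_\infty$, $\|h'_{\epsilon,p_k}\|_\infty$ to grow at most polynomially in $p_k$. This uniformity is precisely the content of the Sauvageot principle in the form used by Shin--Templier. The hypothesis $p_k^l/\mu_k \to 0$ for every $l\geq 1$ is then tailored to force the error $\mu_k^{-1} p_k^{c'\kappa_\epsilon}\cdot (\text{polynomial in } p_k)$ to $0$. Letting $k\to\infty$ first and then $\epsilon\to 0$ completes the argument.
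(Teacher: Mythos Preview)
Your approach is correct but substantially heavier than the paper's. The paper proceeds in two lines: it asserts that the inverse Satake transform $f_{p_k}$ of $\widehat{f}_{p_k}$ lies in $\mathcal{H}^{\mathrm{ur}}_\kappa(H(\Q_{p_k}))$ with a single $\kappa$ independent of $k$, and that $\|f_{p_k}\|_\infty$ is independent of $p_k$; then Corollary~\ref{cor:main} gives
\[
\frac{(2\pi)^d}{\mu_k^d\,\vol(M)\,\varpi_d}\sum_{\pi\in\F(\mu_k)}\widehat{f_{p_k}}(\pi_{p_k})
=\widehat m_{p_k}^{\mathrm{Pl,ur}}(\widehat f_{p_k})+O(\mu_k^{-1}p_k^{c'\kappa}\|f_{p_k}\|_\infty),
\]
and the hypothesis $p_k^l/\mu_k\to 0$ kills the error while the weak convergence $\widehat m_p^{\mathrm{Pl,ur}}\to\widehat m^{\mathrm{ST}}_\theta$ handles the main term.

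The reason this works without Sauvageot is that the paper is implicitly treating $\widehat f$ as a Weyl-invariant Laurent polynomial on $\widehat T_{c,\theta}$; under the fixed identification \eqref{eq:ST(5.2)} such a polynomial transfers to every $G_p^{\wedge,\mathrm{ur}}$, $p\in\mathcal V(\theta)$, as the Satake transform of an element of uniformly bounded truncation depth (namely the degree of the polynomial) and uniformly bounded sup-norm. This is exactly the uniformity you extract from Sauvageot, but obtained for free. Your route via Sauvageot's density principle is what one needs to pass from polynomial $\widehat f$ to arbitrary continuous $\widehat f$, and it also takes care of the non-tempered part of the counting measure, which the paper's argument leaves implicit. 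So your argument is more complete for the statement as literally written, while the paper's is the intended short proof once one reduces (by Stone--Weierstrass on the compact quotient $\widehat T_{c,\theta}/\Omega_{c,\theta}$, or indeed by Sauvageot) to polynomial test functions.
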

\begin{proof}
To begin, notice that there exists a constant $\kappa>0$ such that $f_{p_k}$ belongs to  $\mathcal{H}^\mathrm{ur}_\kappa (H(\Q_{p_k}))$ for any $k$, where $f_{p_k}$ denotes the inverse image of $\widehat {f}_{p_k}$.
Now, by Corollary \ref{cor:main} we have
\[
\frac{(2\pi)^d} {\mu^d \, \vol(M) \, \varpi_d } \sum_{\pi\in \F(\mu_k)} \widehat{f_{p_k}}(\pi_{p_k}) =  \widehat{m}_{p_k}^\mathrm{Pl,ur}(\widehat {f}_{p_k})  + O(\mu_k^{-1} p_k^{c' \kappa} \|f_{p_k}\|_\infty).
\]
Since $\|f_{p_k}\|_\infty$ does not depend on $p_k$, and by assumption we have  $\mu_k^{-1} p_k^{c' \kappa}\to 0$ as $k\to\infty$, the assertion is proved by using the same argument than in the proof of Corollary \ref{plancherel density}.
\end{proof}

\section{Equivariant asymptotics  for Hecke eigenvalues and Sato-Tate equidistribution}\label{sec:5} 

Let us now turn to the equivariant situation. To begin, we collect some basic facts about orbital integrals needed in the sequel. 

\subsection{Orbital integrals}\label{orbital}  Choose $\Theta: g \mapsto \,^t g^{-1}$ as Cartan involution on $G$, and suppose as we may that $K=G\, \cap\, \SO(N)$, where $K$ denotes a maximal compact subgroup of $G$. 
Let $T$ denote a Cartan subgroup in $G$, and  suppose that $T$ is $\Theta$-stable. 
Notice that any semisimple element is conjugate to an element of a $\Theta$-stable Cartan subgroup in $G$, see \cite[Theorem 5.22]{knapp}.
For each $\gamma\in T$, let $G_\gamma$ denote the centralizer of $\gamma$ in $G$, and  $\g_\gamma:=\{ X\in\g \mid \mathrm{Ad}(\gamma)X=X  \}$ its Lie algebra. We then introduce the \emph{orbital integral}
\[
J(\gamma,f):=J^{G/T}(\gamma,f):=|D(\gamma)|^{1/2} \int_{G_\gamma\bsl G} f(g^{-1}\gamma g) \, \d g,  \qquad \gamma\in T, \;\; f\in C_c^\infty(G), 
\]
where  $D(\gamma):=D^G(\gamma):=\det((1-\mathrm{Ad}(\gamma))|_{\g/\g_\gamma})$ is  the Weyl discriminant. Denote the Lie algebra of $T$ by $\t$, and write $\g_\C$ and $\t_\C$ for the respective complexifications.
It is well-known that $J(\gamma,f)$ defines a compactly supported\footnote{Here compactness is to be understood with respect to  the relative topology on $T'$ induced by $T$.} smooth function on the subset  $T'\subset T$ of regular elements of $T$, see \cite[Propositions 11.7]{knapp}. Since the structure of a single $J(\gamma,f)$ is rather involved, it is convenient to consider superpositions of orbital integrals of the following form. Let $W_I$ denote the Weyl group generated by reflections corresponding to the imaginary roots in $(\g_\C,\t_\C)$. One then defines the \emph{stable orbital integral} 
\[
\bar{J}(\gamma,f):=\bar{J}^{G/T}(\gamma,f):=\sum_{w\in W_I} J(w\gamma,f) .
\]

To describe the structure of the stable orbital integrals more explicitly,   let $\fa$ be a maximal Abelian subspace in $\p$ with respect to the Cartan decomposition \eqref{eq:cartan}, and put $A:=\mathrm{exp}(\fa)$. Consider  the corresponding Iwasawa decomposition 
\bq
\label{eq:Iwasawa}
G=A\, UK
\eq
 of $G$, $U$ being a unipotent subgroup $U$ in $G$. There is an algebra isomorphism $\CT(K\bsl G/ K)\ni f \to \A f \in \CT(\fa/W)$ called the  \emph{Abel transform} given by
\bq
\label{eq:Abeltrans}
(\A f)(X):=e^{\rho(X)} \int_U f(\exp(X)n) \, \d n, 
\eq
where $W$ is the Weyl group of $(\g_\C,\fa_\C)$ and $\rho$ denotes the half sum of positive roots of $(A,U)$. We may suppose that $T=T_AT_K$, where $T_K:=T\cap K$ and $T_A:=T\cap A$. Further, for any integrable function $h \in L^1(\fa)$ let 
$$
\widehat h(\lambda):=\int_\fa h(X) e^{\lambda(X)} \d X, \qquad \lambda\in i\fa^*, 
$$
denote its Fourier transform.  Let  $\gamma\in T$ be arbitrary, and without loss of generality suppose that $\fa\cap\g_\gamma$ is a maximal Abelian subspace in $\p\cap\g_\gamma$. Let $B$ denote a $\Theta$-stable Cartan subgroup of $G$ containing $A$, and $\fb$ the Lie algebra of $B$.
Set $\fb_\gamma:=\fb \cap \g_\gamma$. 
The following proposition is a consequence of Herb's Fourier inversion formula \cite{Herb2,Herb1} for $\bar{J}^{G/T}(\gamma,f)$, which expresses the latter in terms of the Fourier transform $\widehat{\A f}$ of $\A f$. 
\begin{proposition}
If $f\in C_c^\infty(K\bsl G/K)$  is non-negative one has 
\bq
\label{eq:28.12.2019}
J(\gamma,f)\leq \bar{J}(\gamma,f) \ll \int_{i\fa^*} \, \big| \widehat{\A f}(\lambda) \big|\, (1+\|\lambda\|)^{\frac{r_\mathrm{nc}(\gamma)}{2}} \, \d \lambda ,
\eq
where $r_\mathrm{nc}(\gamma)$ is the number of non-compact roots in $(\g_{\gamma,\C},\fb_{\gamma,\C})$. 
\end{proposition}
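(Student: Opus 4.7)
The first inequality $J(\gamma,f) \leq \bar J(\gamma,f)$ is immediate. Since $f \geq 0$, each summand $J(w\gamma,f) = |D(w\gamma)|^{1/2}\int_{G_{w\gamma}\bsl G} f(g^{-1}w\gamma g)\,\d g$ is non-negative, and $J(\gamma,f)$ itself occurs in the sum defining $\bar J(\gamma,f)$ (corresponding to $w = 1 \in W_I$).

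For the main bound, the plan is to invoke Herb's Fourier inversion formula for stable orbital integrals of bi-$K$-invariant functions. Combined with the Abel transform \eqref{eq:Abeltrans}, which maps $\CT(K\bsl G/K)$ isomorphically onto $\CT(\fa/W)$, Herb's formula expresses
\bqn
\bar J(\gamma, f) \;=\; \int_{i\fa^*} \widehat{\A f}(\lambda)\, \Phi_\gamma(\lambda) \,\d\lambda,
\eqn
where the kernel $\Phi_\gamma$ is built from the Plancherel densities and (limits of) discrete series characters evaluated at $\gamma$ on the Cartan subgroup $B$ containing $A$, suitably averaged over $W_I$.

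The decisive analytic input from Herb's work is a polynomial growth bound on the kernel: there exists $C_\gamma > 0$ such that
\bqn
|\Phi_\gamma(\lambda)| \;\leq\; C_\gamma\,(1 + \|\lambda\|)^{r_\mathrm{nc}(\gamma)/2}, \qquad \lambda \in i\fa^*,
\eqn
where the exponent reflects the Plancherel contribution of each non-compact root in $(\g_{\gamma,\C}, \fb_{\gamma,\C})$ (each such root contributing a factor of order $\|\lambda\|^{1/2}$). Inserting this pointwise bound under the integral sign and using non-negativity of $\bar J(\gamma,f)$ yields the asserted estimate.

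The main obstacle is to identify the precise form of Herb's inversion formula adapted to the bi-$K$-invariant situation, and to verify the polynomial exponent $r_\mathrm{nc}(\gamma)/2$ in the estimate for $\Phi_\gamma$. In Herb's original formulation the formula is stated for general $\CT(G)$ test functions, so one must track carefully how the reduction to $\CT(K\bsl G/K)$ via the Abel transform affects the kernel on the non-split Cartan subgroups $B\supset A$, and verify that the contribution of the compact roots in $(\g_{\gamma,\C},\fb_{\gamma,\C})$ is $\lambda$-uniformly bounded, leaving only the non-compact roots to contribute to the polynomial growth.
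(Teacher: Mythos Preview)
Your overall architecture is right and matches the paper: express $\bar J(\gamma,f)$ via Herb's Fourier inversion as $\int_{i\fa^*}\widehat{\A f}(\lambda)\,\Phi(\gamma,\lambda)\,d\lambda$ and then bound the kernel $\Phi(\gamma,\lambda)$ in $\lambda$. The first inequality is handled exactly as you say.

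However, the mechanism you propose for the polynomial exponent is not the one that actually works, and this is where the argument has a gap. You attribute the growth $(1+\|\lambda\|)^{r_{\mathrm{nc}}(\gamma)/2}$ to ``Plancherel densities, each non-compact root contributing a factor of order $\|\lambda\|^{1/2}$.'' In the paper's proof the picture is different, and splits into two cases. For \emph{regular} $\gamma\in T'$, after a parabolic descent to the centralizer $\mathcal{M}$ of $T_A$ (needed so that Herb's equal-rank hypothesis $\mathrm{rank}\,\mathcal{M}=\mathrm{rank}\,\mathcal{K}$ is met), Herb's formula produces a kernel $\Phi(\gamma,\lambda)$ built from products of factors $e^{\pm\theta_j(\gamma)\lambda(H_j)}$, $e^{\pi\lambda(H_j)}$, $\sinh(\pi\lambda(H_j))^{-1}$, $\sinh(\pi\lambda(H_j+H_{j+1}))^{-1}$ with $\lambda\in i\fa^*$; these are all \emph{uniformly bounded} in $\lambda$. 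So on the regular set there is no polynomial growth at all.

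The polynomial growth only enters for \emph{singular} $\gamma_0\in T\setminus T'$, and it does so through Harish-Chandra's limit formula
\[
\bar J(\gamma_0,f)=\lim_{\delta\to 1,\;\delta\in\exp(\mathfrak c)} D_{\gamma_0}\,\bar J(\gamma_0\delta,f),\qquad D_{\gamma_0}=c_{\gamma_0}\prod_{\alpha\in\Delta_{\gamma_0}} D_\alpha,
\]
where $\Delta_{\gamma_0}$ is a positive root system in $(\g_{\gamma_0,\C},\t_\C)$. Each $D_\alpha$ differentiates the regular kernel in the $T$-variable; the point is that $D_\alpha$ pulls down a factor of $\lambda$ from $e^{\pm\theta_j(\gamma_0\delta)\lambda(H_j)}$ \emph{only when the transported root $\alpha^{yw}$ is non-compact}, and is harmless otherwise. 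Since $\Delta_{\gamma_0}$ consists of positive roots, the number of such differentiations is at most half the total number of non-compact roots in $(\g_{\gamma_0,\C},\fb_{\gamma_0,\C})$, which is exactly $r_{\mathrm{nc}}(\gamma)/2$. This is where the $1/2$ comes from --- not from a square-root behaviour of Plancherel factors.

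So to turn your sketch into a proof you need to (i) insert the parabolic descent to $\mathcal{M}$ so that Herb's theorem applies, (ii) observe that for regular $\gamma$ the kernel is bounded outright, and (iii) for singular $\gamma$ invoke Harish-Chandra's limit formula and count how many of the $D_\alpha$ actually hit the $\lambda$-dependent exponentials. Without (iii) you do not have a justification for the claimed exponent.
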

\begin{proof} 
First, consider the case where $\gamma\in T'$.
Let  $\mathcal{M}$ denote the centralizer of $T_A$ in $G$. Clearly, $A \subset \mathcal{M}$. Since $T$ is commutative and $T_A \subset T$, we have $T \subset \mathcal{M}$ as well, and by \cite[Proposition 4.7]{DKV1} or \cite[(11.42)]{knapp} one has
\[
  \bar{J}^{G/T}(\gamma,f) =  \bar{J}^{\mathcal{M}/T}(\gamma, f_{\mathcal{U}}), 
\]
where $\mathcal{U}$ denotes the unipotent radical of the parabolic subgroup $\mathcal{M}U$, and for $m \in \mathcal{M}$ we set $f_{\mathcal{U}}(m):= \eta(m) \, \int_{\mathcal{U}}f(m u)\, \d u$,  $\eta$ being a non-negative real-valued quasi-character on $\mathcal{M}$.
Notice that $\eta$ is trivial on $T_K$, and that $W_I$ does not act on $T_A$. Now, since $G=H(\R)$ and $H$ is connected, there exists an algebraic torus $\mathcal{T}$ over $\R$ such  that $\gamma  \in \mathcal{T}(\R)$ and $\mathcal{T}(\R)\subset T$, compare \cite[Corollary 13.3.8 (i)]{springer}. It is known that $\mathcal{T}(\R)$ is isomorphic to $(\R^\times)^{n_1}\times (\R_{>0})^{n_2}\times (\C^1)^{n_3}$ for some $n_1$, $n_2$, $n_3\in\N$, where $\C^1:=\{z\in \C \mid |z|=1\}$.
The part $(\R^\times)^{n_1}\times (\R_{>0})^{n_2}$ is included in the center $C(\mathcal{M})$ of $\mathcal{M}$.
Since the Fourier transform on $C(\mathcal{M})$ is obvious, the problem is reduced to the case $J^{\mathcal{M}/T}(\gamma', f_{\mathcal{U}})$ where $\gamma'$ denotes the  $(\C^1)^{n_3}$-part  of $\gamma$.
Note that $\mathrm{rank}\, \mathcal{M}=\mathrm{rank}\, \mathcal{K}$, where $\mathcal{K}:=\mathcal{M}\cap K$.
Since $(\C^1)^{n_3}$ is connected, $\gamma'$ belongs to the connected component $\mathcal{M}^0$ of the identity in $\mathcal{M}$.
Therefore, we may assume that $\mathcal{M}$ is connected in view of the equality $J^{\mathcal{M}/T}(\gamma',f)=\frac{[\mathcal{M}\, :\, \mathcal{M}^0]}{[\mathcal{M}_{\gamma'}\, :\, \mathcal{M}_{\gamma'}\cap \mathcal{M}^0]}J^{\mathcal{M}^0/(T\cap \mathcal{M}^0)}(\gamma',f)$.
By the above mentioned conditions on $\mathcal{M}$ and $\mathcal{K}$, we can now apply Herb's Fourier inversion formula \cite[Theorem 1]{Herb1}, \cite[Theorem 2]{Herb2} to $J^{\mathcal{M}/T}(\gamma',f)$, and consequently obtain an explicit formula for  $\bar J^{G/T}(\gamma,f)$. Without explaining the details, we obtain as a result for any $f \in C_c^\infty(K\bsl G/K)$ and regular  $\gamma \in T'$ the expression 
\bq  \label{eq:20191213}
\bar{J}^{G/T} (\gamma,f)= \int_{i\fa^*}  \widehat{\A f}(\lambda) \, \Phi(\gamma,\lambda) \, \d \lambda, 
\eq
where $\Phi(\gamma,\lambda)$ is an explicitly given smooth function on $ T'\times i\fa^*$. To give a closer description of $\Phi(\gamma,\lambda)$, let $\Delta$ denote a root system in $(\g_\C,\fb_\C)$, and $\Delta_\R$ the subset of real roots in $\Delta$. 
For each $\alpha\in\Delta$, define an element $\bar{H}_\alpha$ in $\fb$ by $\alpha(H)=\langle H,\bar{H}_\alpha\rangle$ for all $H\in\fb$, and set $H_\alpha:=2\bar{H}_\alpha/\langle\bar{H}_\alpha,\bar{H}_\alpha\rangle\in\fb$.
Let $\fa_\mathcal{M}$ denote the Lie algebra of $A\cap C(\mathcal{M})$, so that we have the orthogonal direct sum $\fa=\fa_\mathcal{M}\oplus\fa^\mathcal{M}$. 
Take strongly orthogonal real roots $\alpha_1$, $\alpha_2,\dots,\alpha_t$ in $\{ \alpha\in\Delta_\R \mid H_\alpha \in \fa^\mathcal{M} \}$ following \cite[Section 2]{Herb2}, and take elements $\alpha_{t+1},\dots,\alpha_s\in \fa^*$ such that $\alpha_1,\dots,\alpha_s$ form a basis in $\fa^*$. Set $H_j:=H_{\alpha_j}(\in\fa^\mathcal{M})$ for $1\leq j\leq t$, and  for $t<j\leq s$  denote the orthogonal projection of $H_{\alpha_j}$ to $\fa_\mathcal{M}$ by $H_j$. 
Let $\mathfrak{m}$ denote the Lie algebra of $\mathcal{M}$, take an element $y$ in $\mathrm{Ad}(\mathfrak{m}_\C)$ such that $y^{-1}\t_\C y=\fb_\C$, and denote the Weyl group of $(\mathfrak{m}_\C,\t_\C)$ by $W^{\mathfrak{m}_\C}(\subset \mathrm{Ad}(\mathfrak{m}_\C))$.  
Then, by Herb's formula \cite{Herb2,Herb1} one has 
\bq\label{eq:9.11.2020}
\Phi(\gamma,\lambda)= \sum_{w\in W^{\mathfrak{m}_\C}}\Phi_{y,w}(\gamma,\lambda),
\eq
each  $\Phi_{y,w}(\gamma,\lambda)$ being given  in terms of  a linear combination of products of the functions
\bq \label{eq:factors}
e^{\pm \theta_j(\gamma) \lambda(H_j) } , \quad e^{ \pi \lambda(H_j) } , \quad \sinh(\pi \lambda(H_j))^{-1} , \quad \sinh(\pi \lambda(H_j+H_{j+1}))^{-1},
\eq
where $ \gamma\in T', \; \lambda\in i\fa^*, \; 1\leq j\leq s$,  and $\theta_j(\gamma)\in\R$ is determined by the condition
\[
y^{-1}w^{-1}\gamma wy=h_0\exp\Big(i\sum_{j=1}^t \theta_j(\gamma) H_j + \sum_{j=t+1}^s \theta_j(\gamma) H_j\Big )\in \exp(\fb_\C)
\]
for some $h_0\in B\cap K$. In particular, one sees that $\Phi(\gamma,\lambda)$ is uniformly bounded in $\norm{\lambda}$ from above, yielding \eqref{eq:28.12.2019} in this case. Next, let us consider the case where $\gamma_0\in T\setminus T'$ is a singular element. For each $y\in G_{\gamma_0}$, set
\[
f_{\gamma_0}(y):=\frac{|D^G(\gamma_0 y)|^{1/2}}{|D^{G_{\gamma_0}}(y)|^{1/2}}\int_{G_{\gamma_0}\bsl G} f(x^{-1}\gamma_0 y x)\, \d x,
\]
and fix a chamber  $\mathfrak{c}:=\{ H\in\t\mid \alpha(H)>0 \, \text{ for all } \alpha\in\Delta_{\t_\C, +} \}$ with respect to a positive root system $\Delta_{\t_\C, +}$ in $(\g_\C,\t_\C)$.  
Let $\Delta_{\gamma_0}$ denote a positive root system in $(\g_{{\gamma_0},\C},\t_\C)$ and for $\alpha \in \Delta_{\gamma_0}$ write $D_\alpha$  for the invariant differential operator on $T$ corresponding to  $H_\alpha\in \t$, that is, $D_\alpha \Phi(\delta)=\lim_{\theta\to 0} \frac{\d}{\d \theta} \Phi(\delta\exp(\theta H_\alpha))$ where $\Phi$ is a differentiable function on $T'$.
Then, by Harish-Chandra's limit formula \cite[Theorem 4]{HC1957},
\[
J^{G/T}({\gamma_0},f)=f_{{\gamma_0}}(1)=\lim_{\delta\to 1,\, \delta\in \exp(\mathfrak{c})} D_{\gamma_0} J^{G_{\gamma_0}/T}(\delta,f_{{\gamma_0}}) =\lim_{\delta\to 1,\, \delta\in \exp(\mathfrak{c})} D_{\gamma_0} J^{G/T}({\gamma_0}\delta,f) 
\]
where  $D_{\gamma_0}:=c_{\gamma_0}\prod_{\alpha\in\Delta_{\gamma_0}}D_\alpha$ acts on the variable $\delta\in \exp(\mathfrak{c})$ and  $c_{\gamma_0}$ is some constant. Since $D_{\gamma_0}$ remains unchanged under the action of $W_I$ on $\delta$, we conclude that 
\begin{align}
\begin{split}
\label{eq:20191213a}
J({\gamma_0},f)\leq \bar{J}({\gamma_0},f)&=\sum_{w\in W_I}\lim_{\delta\to 1,\, \delta\in \exp(\mathfrak{c})} D_{\gamma_0} J((w{\gamma_0})\delta,f) = \lim_{\delta\to 1,\, \delta\in \exp(\mathfrak{c})} D_{\gamma_0} \bar{J}({\gamma_0}\delta,f)
\end{split}
\end{align}
with $\bar{J}({\gamma_0}\delta,f)$ given by \eqref{eq:20191213}. 
For each $w\in W(\g_\C,\t_\C)$ \textcolor{black}{and  $\alpha \in \Delta_{\gamma_0}$, the differential operator $D_\alpha$} acts on the factors $e^{\pm \theta_j(\gamma_0\delta) \lambda(H_j) }$ only if $\alpha^{yw}\in \Delta$ is not compact, where $\alpha^{yw}(H):=\alpha(y^{-1}w^{-1}Hwy)$.
Hence, \eqref{eq:9.11.2020} implies that {$D_{\gamma_0} \Phi(\gamma_0\delta,\lambda)$} consists of  a linear combination of products whose  factors  are given by the expressions \eqref{eq:factors} and polynomials in $\lambda$ whose degrees are less than or equal to $r_\mathrm{nc}(\gamma)/2$. 
Therefore, {$D_{\gamma_0} \Phi(\gamma_0\delta,\lambda)$} is uniformly bounded from above by $(1+\|\lambda\|)^{r_\mathrm{nc}(\gamma)/2}$ for any $\delta\in \exp(\mathfrak{c})$, and the assertion follows.
\end{proof}

\subsection{Equivariant asymptotics and equidistribution results}\label{Equivariant case}

We are now ready to derive asymptotics for Hecke eigenvalues in the equivariant setting. With the notation as in Section \ref{sec:4}, let $K$ be a maximal compact subgroup of $G=H(\R)$, so that  $C(G)\subset K$.
Further, we may suppose that $G$ is not compact.
Denote by $Z_H$  the center of $H$, and set
\[
Z:=Z_H(\Q)\cap K_0.
\]
Clearly,  $Z\subset C(G)=Z_H(\R)$. 
Choose an irreducible representation $\sigma$ in $\widehat K$.
It is obvious that $L^2_\sigma(H(\Q)\bsl H(\bA)/K_0)=0$ if $\sigma$ is not trivial on $Z$.
Hence, we may suppose that $\sigma$ is trivial on $Z$, so  that 
\[
Z_\sigma:=\mathrm{Ker}(\sigma)\supset Z.
\]
Notice that   $C(\Gamma_j)=\Gamma_j\cap Z_H(\Q) = Z$ for any $j$, where $C(\Gamma_j)$ denotes the center of $\Gamma_j$, since $\Gamma_j$ is Zariski dense in $H$, see \cite[Theorem 4.10]{PR}. 
 To begin, we need the following  variant of Lemma \ref{lem:15.04.2018}. 
\begin{lem}\label{lem:21.04.2018}  
Let $T_\alpha$ be a Hecke operator as in Lemma \ref{lem:relation}.  For  $\eps \geq 0$, denote by $f_\eps: G \rightarrow \mklm{0,1}$ the characteristic function of 
$K_\eps:=\{ g\in G \mid \dist_G(K,gK)\leq \eps   \}$. 
Then,  one  has for each  $x=(g,l) \in M$ and $\eps>0$  the asymptotic formula
\begin{align*}
&  K_{T_\alpha\circ \widetilde s_\mu\circ \Pi_\sigma} (x,x) -  \left [ \sum_{k \in  \Gamma(\alpha,l)}\sigma(k) +\sum_{y \in T(\alpha,x)-C(\alpha,x)}\sigma(k_y)\right ] K_{\widetilde s_\mu\circ \Pi_\sigma} (x,x) \\
& =  O\Big( \mu^{d-\dim K-1} \sum_{m=1}^{c_{l,l}}  \sum_{\beta\in \Gamma_l\bsl \Gamma_l\alpha_{l,l,m}\Gamma_l} ( f_\eps(g^{-1}\beta g)-f_0(g^{-1}\beta g) )  \Big) \\
& \quad +O\Big (       (\mu/\eps)^{(d-\dim K -1)/2}  \sum_{m=1}^{c_{l,l}} |\Gamma_l\bsl \Gamma_l\alpha_{l,l,m}\Gamma_l|  +\mu^{-\infty}\,   \sum_{j=1}^{c_H} \sum_{m=1}^{c_{j,l}} |\Gamma_j\bsl \Gamma_j\alpha_{j,l,m}\Gamma_l|\Big ).
\end{align*}
\end{lem}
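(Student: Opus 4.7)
The plan is to refine Lemma \ref{lem:15.04.2018} by introducing the threshold $\eps$ and splitting the off-orbit Hecke points according to whether they lie within distance $\eps$ of the orbit $\O_x = xK$ or not. The new correction term involving $f_\eps - f_0$ will absorb the contribution of the near-orbit points, while the far-from-orbit points will be controlled uniformly by $\eps$, replacing the pointwise-divergent factor $D_K(\alpha,x)$ that appears in Lemma \ref{lem:15.04.2018}.

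I would begin exactly as in the proof of Lemma \ref{lem:15.04.2018}, starting from the spectral expansion \eqref{eq:4cis} and isolating the contribution of the Hecke points $y = (\beta g, l) \in T(\alpha,x)$, characterized by $\beta = g k_y g^{-1}$ for some $k_y \in K$; by the $K$-equivariance of the kernel, these produce precisely the bracketed term on the left-hand side, split into the central and non-central parts corresponding to $\Gamma(\alpha,l)$ and $T(\alpha,x) - C(\alpha,x)$. Cross-component Hecke points ($j \ne l$) give infinite geodesic distance and so, by the rapid decrease part of Proposition \ref{prop:30.01.2017}, contribute the $O(\mu^{-\infty})$ term in the statement. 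Observing that, by left-invariance of $\dist_G$, the condition $f_0(g^{-1}\beta g) = 1$ is equivalent to $g^{-1}\beta g \in K$, i.e.\ to $y \in T(\alpha,x)$, while $f_\eps(g^{-1}\beta g) = 1$ is equivalent to $\dist(yK, xK) \leq \eps$, we identify the summand $f_\eps(g^{-1}\beta g) - f_0(g^{-1}\beta g)$ as the indicator of the set of Hecke points lying $\eps$-near, but not on, the orbit $\O_x$.

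With this bookkeeping in place, I would split the remaining sum $\sum_{y \in H(\alpha,x) - T(\alpha,x)}$ into two regimes. For the far-from-orbit part $\dist(yK,xK) > \eps$, I would apply the off-orbit estimate of Proposition \ref{prop:30.01.2017} with the uniform lower bound $\dist(x,\O_y) > \eps$, yielding
\[
|K_{\widetilde s_\mu \circ \Pi_\sigma}(y,x)| \ll (\mu/\eps)^{(d-\dim K -1)/2}
\]
uniformly, summed over at most $\sum_m |\Gamma_l \bsl \Gamma_l \alpha_{l,l,m} \Gamma_l|$ points; this gives the second error term. For the near-orbit part $0 < \dist(yK,xK) \leq \eps$, the off-orbit bound is useless, so I would invoke instead the uniform pointwise bound $|K_{\widetilde s_\mu \circ \Pi_\sigma}(y,x)| \ll \mu^{d-\dim K -1}$, multiplied by the number of such Hecke points, which is exactly $\sum_m \sum_\beta (f_\eps - f_0)(g^{-1}\beta g)$, yielding the first error term.

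The main obstacle is establishing the uniform bound $\mu^{d-\dim K -1}$ on the kernel at points close to but not on the orbit, since Proposition \ref{prop:30.01.2017} as stated only guarantees this order of growth for $y \in \O_x$, and its off-orbit expansion carries a prefactor $\dist(x,\O_y)^{-(d-\dim K-1)/2}$ that blows up as the orbit is approached. One should show, by a direct inspection of the Fourier integral operator representation of $\widetilde s_\mu \circ \Pi_\sigma$ underlying the proposition, that the on-orbit value is in fact the global $L^\infty$-maximum of the kernel up to constants, so that the bound $\|K_{\widetilde s_\mu \circ \Pi_\sigma}\|_\infty \ll \mu^{d-\dim K -1}$ holds uniformly on $M \times M$ and can be used for all near-orbit Hecke points. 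Once this uniform bound is in hand, combining the two regimes and the $j \ne l$ contribution yields the stated estimate.
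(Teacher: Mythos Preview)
Your proposal is correct and follows essentially the same route as the paper's proof: split the off-orbit Hecke points at threshold $\eps$, use the off-orbit expansion of Proposition \ref{prop:30.01.2017} for the far points, and the uniform on-orbit bound for the near ones. The obstacle you flag---that the uniform $\mu^{d-\dim K-1}$ bound is not literally contained in the stated off-orbit case of Proposition \ref{prop:30.01.2017}---is handled in the paper not by reproving it but by citing \cite[Remark 3.4]{RW}, which records exactly the global $L^\infty$-bound you anticipate; one minor point you glossed over is that the equivalence $f_\eps(g^{-1}\beta g)=1 \Leftrightarrow \dist(yK,xK)\le\eps$ requires choosing the representative $\beta$ in its $\Gamma_l$-coset so that the infimum defining the quotient distance is attained.
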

\begin{proof}
Let $x=(g,l)$ be fixed and $y=(\beta g,l) \in H(\alpha,x)$. Then
\begin{align*}
\dist(xK,yK) =\dist (\Gamma_lgK , \Gamma_l\beta gK)
=\inf_{\gamma \in \Gamma_l} \dist_G (K, g^{-1}\gamma\beta g K).
\end{align*}
Assuming as we may that $\beta\equiv \Gamma_l \beta$ has been chosen such that  $\inf_{\gamma \in \Gamma_l} \dist_G (K, g^{-1}\gamma\beta g K)$ is attained by $\dist_G (K, g^{-1}\beta g K)$ we obtain 
\bqn 
\dist(xK,yK) \leq \eps \qquad \Longleftrightarrow \qquad f_\eps(g^{-1}\beta g)=1.
\eqn
Furthermore, $f_0(g^{-1}\beta g)=1$ iff $y \in T(\alpha,x)$. Consequently, for $\dist(xK,yK) \leq \eps$ we have 
\bqn 
f_\eps(g^{-1}\beta g)-f_0(g^{-1}\beta g)= \begin{cases} 1 & \text{iff } y \in H(\alpha,x) -T(\alpha,x),  \\ 0  & \text{iff } y \in T(\alpha,x). \end{cases}
\eqn
Thus,
\begin{gather*}
 K_{T_\alpha\circ \widetilde s_\mu\circ \Pi_\sigma} (x,x) -  \sum_{y \in   T(\alpha,x)}K_{\widetilde s_\mu\circ \Pi_\sigma} (y,x)=  \sum_{y \in   H(\alpha,x)-T(\alpha,x)}  K_{\widetilde s_\mu\circ \Pi_\sigma} (y,x) \\ 
 = \sum_{y \in   H(\alpha,x)} \big ( f_\eps(g^{-1}\beta g)-f_0(g^{-1}\beta g) \big ) K_{\widetilde s_\mu\circ \Pi_\sigma} (y,x)  + \sum_{y \in   H(\alpha,x)-T(\alpha,x), \, \dist(xK,yK) > \eps }  K_{\widetilde s_\mu\circ \Pi_\sigma} (y,x)
\end{gather*}
up to terms of order $O(\mu^{-\infty})$ times the cardinality of the sum in \eqref{eq:4cis}. The assertion now follows from Proposition \ref{prop:30.01.2017} along the lines of the proof of Lemma \ref{lem:15.04.2018} by taking into account \cite[Remark 3.4]{RW}.
\end{proof}

To proceed, we need the following{\footnote{Here is some overlap with recent results in \cite[Lemma 7.11]{brumley-marshall20}, though our proof is independent and methodologically different.}
\begin{proposition}\label{lem:20190605}
Let $\widetilde K$ denote the maximal compact normal subgroup of $G$, that is, the product of the center $C(G)$ and all compact simple factors of $G$.
Fix $m\in \N$, and let $\beta\in H(\Q)\cap \M(N,\frac{1}{m}\Z)$ be such that $\beta \notin \widetilde K$. Choose a bounded domain $D$ in $G$.
Then, for any  $0< \eps\ll \log(1+1/Nm^2)$ and any $0<s<1$ we have 
\[
\int_D f_\eps (g^{-1}\beta g)\, d g \ll_{D}  m^{N}  \, s^{-1} \, \eps^{1-s}.
\]
\end{proposition}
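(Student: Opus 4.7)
The strategy is to pass from the given geometric integral to an orbital integral at $\beta$ and then apply the Fourier--Abel bound \eqref{eq:28.12.2019} of the preceding proposition. Since $f_\eps$ is bi-$K$-invariant, the function $g \mapsto f_\eps(g^{-1}\beta g)$ is constant along left cosets of the unimodular centralizer $G_\beta$. Unfolding by Weil's formula gives
\[
\int_D f_\eps(g^{-1}\beta g)\, dg = \int_{G_\beta\backslash G} \mathrm{vol}_{G_\beta}\!\big(G_\beta \cap D g_0^{-1}\big)\, f_\eps(g_0^{-1}\beta g_0)\, d\dot g_0 \ll_D |D(\beta)|^{-1/2}\, J(\beta, f_\eps),
\]
where the final inequality uses that $G_\beta$ is a closed reductive subgroup of $G$, so that $G_\beta$-volumes of bounded subsets of $G$ are controlled by a constant depending only on $D$.

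Because $f_\eps$ is merely the characteristic function of $K_\eps$ and \eqref{eq:28.12.2019} requires a smooth test function, I would introduce a smoothing parameter $\delta \in (0,\eps]$ and replace $f_\eps$ by the majorant $\tilde f_\delta := f_{\eps+\delta} * \omega_\delta$, where $\omega_\delta$ is a non-negative smooth bi-$K$-invariant bump of unit mass supported in $K_\delta$. Then $f_\eps \leq \tilde f_\delta$ and the stable orbital integral bound yields
\[
J(\beta, f_\eps) \leq \bar J(\beta, \tilde f_\delta) \ll \int_{i\fa^*} |\widehat{\A \tilde f_\delta}(\lambda)|\, (1+\|\lambda\|)^{r_\mathrm{nc}(\beta)/2}\, d\lambda.
\]
The Abel transform $\A \tilde f_\delta$ is supported in a ball of radius $O(\eps+\delta)$ in $\fa$, on which the factor $e^{\rho(X)}$ is of size $O(1)$ thanks to the restriction $\eps \ll \log(1+1/Nm^2)$, and its derivatives of order $|\alpha|$ are controlled by $\delta^{-|\alpha|}$ via the smoothness of $\omega_\delta$. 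A standard integration-by-parts argument gives $|\widehat{\A\tilde f_\delta}(\lambda)| \ll_M (\eps+\delta)^{\dim U + \dim \fa}(1+\delta\|\lambda\|)^{-M}$ for any $M \geq 0$. Splitting the integral at $\|\lambda\| \sim 1/\delta$, taking $M$ large enough to absorb $(1+\|\lambda\|)^{r_\mathrm{nc}(\beta)/2}$, and balancing $\delta$ against $\eps$ (roughly $\delta = \eps^{1/(1-s)}$) produces a bound of the form $\eps^{1-s}/s$, where the $s^{-1}$ reflects the cost of this interpolation.

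The remaining factor of $m^N$ must then be extracted from $|D(\beta)|^{-1/2}$ and the growth of $(1+\|\lambda\|)^{r_\mathrm{nc}(\beta)/2}$. Since $\beta \in \M(N,\tfrac{1}{m}\Z)$, the characteristic polynomial of $\mathrm{Ad}(\beta)$ has coefficients with denominators dividing a fixed power of $m$, so a Liouville-type argument yields $|D(\beta)| \geq c_N m^{-C}$ whenever $D(\beta) \neq 0$; for singular $\beta$ one passes via Harish-Chandra's limit formula as in \eqref{eq:20191213a}, the resulting differential operators being absorbed into additional polynomial factors in $\lambda$ matched by a slightly larger $M$. The main obstacle of the whole argument is the careful bookkeeping of how all these ingredients combine into the precise exponent $N$ of $m$: one must exploit the hypothesis $\beta \notin \widetilde K$ (which forces a genuine non-compact direction and keeps $r_\mathrm{nc}(\beta)$ under control) jointly with the smallness condition $\eps \ll \log(1+1/Nm^2)$ to guarantee uniformity of all implicit constants and to remain in the linearizable regime needed for the Abel-side estimates.
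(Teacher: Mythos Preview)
Your overall architecture matches the paper's: reduce to an orbital integral, smooth $f_\eps$, and invoke the Fourier--Abel bound \eqref{eq:28.12.2019}. Two points of your execution, however, are either missing or off.

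First, the paper splits into two cases. When $\beta\notin K$, the displacement function $\delta_\beta(X)=\dist_G(K,\exp(-X)\beta\exp X\,K)$ has a positive minimum $\|X_\beta\|$ (semisimplicity of $\beta$ is used here), and the arithmetic input $\beta\in\M(N,\tfrac1m\Z)$ forces $\mathfrak{L}(\beta)=\log(\|\beta\|^2/N)\geq\log(1+1/Nm^2)$, hence $\|X_\beta\|\gg\log(1+1/Nm^2)$. So for $\eps$ in the stated range the integral vanishes identically. You do not isolate this case; your orbital-integral route would still have to produce the same conclusion, and the smallness hypothesis on $\eps$ is doing real work precisely here.

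Second, and more seriously, your treatment of the exponent $r_{\mathrm{nc}}(\beta)$ is too vague to yield $\eps^{1-s}$. The paper's decisive observation \eqref{eq:30.12.2019} is that $\beta\notin\widetilde K$ forces the root system of the centralizer to miss at least one pair of non-compact roots, so $r_{\mathrm{nc}}(\beta)/2\le\dim U-1$ rather than the trivial bound $\dim U$. With smoothing at scale $\eps$ (not a separate $\delta$), one has $\widehat{\A\tilde f_\eps}(\lambda)=\eps^{\dim A+\dim U}\widehat{\mathcal B_\eps}(\eps\lambda)$ with $\widehat{\mathcal B_\eps}$ Schwartz uniformly in $\eps$, and then
\[
J(\beta,\tilde f_\eps)\ll\eps^{\dim A+\dim U}\int_{i\fa^*}|\widehat{\mathcal B_\eps}(\eps\lambda)|\,(1+\|\lambda\|)^{\dim U-1}\,d\lambda.
\]
Inserting the weight $(\eps+\|\eps\lambda\|)^{s'}(1+\|\lambda\|)^{-s'}$ with $s'=\dim A+\dim U-1+s$ makes the $\lambda$-integral $\int(1+\|\lambda\|)^{-\dim A-s}d\lambda\asymp s^{-1}$ and leaves $\eps^{1-s}$. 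Your ``balancing $\delta=\eps^{1/(1-s)}$'' is a red herring: there is nothing to balance, and with only the trivial bound $r_{\mathrm{nc}}(\beta)/2\le\dim U$ the same computation would give $\eps^{-s}$, which is useless. The $m^N$ factor is obtained, as you suggest, from $|D(\beta)|^{-1/2}$; the paper makes this explicit by showing that the lattice $L=\g_\beta^\perp\cap\M(N,\Z)$ is $\mathrm{Ad}(\beta)$-stable up to $m^2$, whence $|D(\beta)|\in m^{-2N}\N_*$.
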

\begin{proof}
Recall the notations and the setting in Section \ref{orbital}.
To begin, we define an inner product  on $\M(N,\R)$ by setting $(X,Y):=\Tr(X\, {}^t\!Y)$ and a  norm $\|X\|:=(X,X)^{1/2}$.
The corresponding distance is  locally equivalent to the distance  $\dist_G$ on $G\subset \M(N,\R)$. 
Denote by $\Delta$ the root system of $(\g,\fa)$, by $\Delta_0$ the set of positive simple roots, and by $W$ the corresponding Weyl group. 
Further, recall the polar decomposition $G=KAK$, by which every $g\in G$ can be written as $g=k_1 \cdot \exp(X(g)) \cdot k_2$ where $k_i \in K$, and $X(g) \in \fa$ is uniquely determined up to conjugation by  $W$. Introducing the positive Weyl chamber $\fa^+:=\mklm{X \in \fa \mid \alpha(X)>0 \, \forall \, \alpha \in \Delta_0}$, this decomposition induces a mapping  $X:G\to \overline{\fa^+}$ such that
\bqn
\|X(g)\|\ll \dist_G(K,gK)\ll \|X(g)\|
\eqn
uniformly in  $g\in D$. Further, by \cite[Lemma 4.2]{MT},
\bq 
\label{eq:MT}
\|X(g)\|^2 \ll_D\mathfrak{L}(g):=\log\big ( \norm{g}^2/N\big ) \leq 2\|X(g)\|, \qquad g \in D, 
\eq
 with $\mathfrak{L}(g)=0$ iff $g \in K$. Now, let $\beta \in G$ be arbitrary.  
 By the $K$-bi-invariance of $f_\eps$ one computes with respect to the global Cartan decomposition \eqref{eq:globalCartan}
 \[
\int_D f_\eps (g^{-1}\beta g)\, d g= \int_K \int_{D_\p} f_\eps (\exp(-X) \cdot \beta \cdot \exp X)\, d X \d k \ll  \int_{D_\p} f_\eps (\exp(-X) \cdot \beta \cdot \exp X)\, d X, 
\]
where $D_\p\subset \p$ is a bounded domain and $dX$ a suitable measure on $\p$. Let us examin the last integral more closely by  introducing  the \emph{$\beta$-displacement function} 
\bqn 
\delta_\beta(X):= \dist_G(K, \exp(-X) \cdot \beta \cdot \exp X \cdot K), \qquad X \in \p,
\eqn
which can also be regarded as a function on the Riemannian symmetric space $G/K$ in view of the diffeomorphism $G/K\simeq \p$.
If $\beta$ is semisimple,  the infimum of $\delta_\beta$ is reached, and the points where it is reached constitute a  submanifold  $S_\beta\subset \p$,  see  \cite[p. 279]{helgason78} and \cite[Proposition 5.7]{DKV1}. Furthermore, the minimum of $\delta_\beta$ is given by  $\| X_\beta \|$ if one writes $\beta =\exp(X_\beta) \cdot  k_\beta$ with respect to the  decomposition \eqref{eq:globalCartan}. Notice that since  $H(\Q)\bsl H(\bA)$ is compact, all elements in $H(\Q)$ are semisimple.\footnote{This fact  is crucial for the following.} 

Now, assume that $\beta \in H(\Q)\cap  \M\big (N,\frac{1}{m}\Z\big )$ for some $m \in \N$, but $\beta\notin K$, so that $X_{\beta}\not=0$. By the above, 
\bqn 
f_\eps( \exp(-X) \cdot \beta \cdot \exp X) =0 \qquad \text{for all $X \in \p$ if } \, \eps< \norm{X_{\beta}},
\eqn
and by \eqref{eq:MT} we have $\mathfrak{L}(\beta)\leq 2\|X(\beta)\| \leq 2 \norm{X_{\beta}}$, while
\bqn 
\frac{\Tr (\beta^t \beta)}N=\frac L{Nm^2}>1, \qquad L \in \mklm{Nm^2+1, Nm^2+2, \dots }.
\eqn
Consequently, we conclude for  all $X \in \p$ that 
\bqn 
f_\eps( \exp(-X) \cdot \beta \cdot \exp X) =0 \qquad \text{if } \, \eps\ll  \log \Big (1 +\frac1{Nm^2}\Big ),
\eqn
yielding the assertion for $\beta \notin K$. 

Next, let us suppose that $\beta \in H(\Q)\cap  \M\big (N,\frac{1}{m}\Z\big )\cap K$, so that  $\inf \delta_\beta=0$. Our intention is to make use of the upper bound \eqref{eq:28.12.2019}  for orbital integrals to show the desired estimate in this case. For this sake notice that since $H$ is a closed subgroup of $\SL(N)$ over $\Q$, the Lie algebra $\mathfrak{h}\subset \M(N,\Q)$ of $H$ is a $\Q$-vector space, so that   $\mathfrak{h}\otimes \R\simeq \g$ and  $\mathfrak{h}=\g\cap \M(N,\Q)$. Further,  $\beta\in H(\Q)$ implies that  the $\R$-subspace $\g_\beta:=\{ X\in\g \mid \beta X=X\beta \}$ has a basis $\{Y_j \}_{1\leq j\leq \dim \g_\beta}$ consisting of matrices  $Y_j\in \M(N,\Z)$.
Consequently,  $L:=\g_\beta^\perp\cap \M(N,\Z)$ must be a $\Z$-lattice in the orthogonal complement $\g_\beta^\perp:=\{ X\in\g \mid (X,Y_j)=0,  \;\; 1\leq j\leq \dim\g_\beta\}$. 
In addition, $\g_\beta^\perp$ is $\mathrm{Ad}(\beta)$-stable since $(\beta X\beta^{-1},Y)=( X,\beta Y\beta^{-1})$ for any  $\beta\in K$.
Thus,  $m^2 \beta L {}^t\!\beta\subset L$, and we conclude that 
\[
|D(\beta)|:=\big |\det((\1-\mathrm{Ad}(\beta))|_{\g_\beta^\perp})\big |\in \frac{1}{m^{2N}}\N.
\]
In view of   $\beta\not\in C(G)$, this implies that  $1\leq m^{N} |D(\beta)|^{1/2}$. Now, choose a $\Theta$-stable Cartan subgroup $T$ in $G$ such that $T_K:=T\cap K$ is a maximal torus in $K$. 
There exists an element $k_0\in K$ such that $k_0^{-1}\beta k_0=\beta_0\in T_K$, and without loss of generality we may suppose that $D$ is left $K$-invariant. Since  $|D(\beta)|=|D(\beta_0)|$, we obtain
\[
\int_D f_\eps(g^{-1}\beta g) \, \d g=\int_D f_\eps(g^{-1}\beta_0 g) \, \d g \leq m^{N} \, |D(\beta_0)|^{1/2}\int_D f_\eps(g^{-1}\beta_0 g) \, \d g.
\]
 Further, there are only finitely many possibilities for  centralizers of elements in $T_K$, so that normalizing their Haar measures we arrive at
\bq
\label{eq:31.12.2019}
\int_D f_\eps(g^{-1}\beta g) \, \d g \ll m^{N} \, |D(\beta_0)|^{1/2}\int_{D_{\beta_0}\times D'}f_\eps(g^{-1}\beta_0 g) \, \d g \ll  m^{N} \,  J^{G/T}(\beta_0,f_\eps), 
\eq
where we wrote $D_{\beta_0}\subset G_{\beta_0}$ and $D'\subset G_{\beta_0}\bsl G$ for the projections of $D$  with respect to the decomposition $G\simeq G_{\beta_0}\times G_{\beta_0}\bsl G$. 

In order to use the upper bound \eqref{eq:28.12.2019}, we have to replace $f_\eps$ by a  test function $\widetilde f_{\eps} \in \CT( K \bsl G /K)$ in a suitable way. Recall that  $f_\eps$ is  the characteristic function of the $K$-bi-invariant compact set $K_\eps:=\mklm{g \in G  \mid \dist_G(K,gK) \leq \eps}$. Using standard techniques  one can construct   a  function $\widetilde f_\eps\in \CT(K \bsl G/K)$ which,  say,  equals $1$ on  $K_{2\eps}$ and is supported inside $K_{4\eps}$,  compare \cite[Theorem 1.4.1]{hoermanderI}.  Furthermore, one can achieve  that the restriction of  $\widetilde f_\eps$ to $A \, U\simeq \fa \times \mathfrak{u}\simeq \p$ with respect to the Iwasawa decomposition \eqref{eq:Iwasawa} 
is essentially of the form
\bqn 
\widetilde f_{\eps}\equiv  f_{3\eps} \ast \chi_\eps,
\eqn
where $\chi \in \CT(\fa \times \mathfrak{u})$ denotes a non-negative function with support in the unit ball,  $\int_{\fa \times \mathfrak{u}} \chi =1$, and $\chi_\eps:=\eps^{-\dim (A\times U)}\chi(\cdot/\eps)$. Furthermore, writing the integral over $U$ in \eqref{eq:Abeltrans} as an integral over its Lie algebra $\mathfrak{u}$ one computes 
\begin{align}
\begin{split}
\label{eq:27.12.2019}
\widehat{\A \widetilde f_{\eps}}(\lambda)&=\int_{\fa}\bigg ( \int_ {\mathfrak{u}} e^{\rho(X)} \underbrace{\widetilde f_{\eps}(e^Xe^Y )}_{\equiv (f_{3\eps} \ast \chi_\eps)(X,Y)} \d Y  \bigg ) e^{\lambda(X)}  \d X \\ 
&\equiv \int_{\fa\times \mathfrak{u}}  \left ( \int_{\fa \times \mathfrak{u}} f_{3\eps}(X',Y')\chi_\eps(X-X',Y-Y') \d Y' \d X' \right )  e^{(\lambda+\rho)(X)} \d Y  \d X \\ 
&= \int_{\fa \times \mathfrak{u}} \left ( \int_{\fa \times \mathfrak{u}} f_{3\eps}(\eps X',\eps Y')\chi(X/\eps-X',Y/\eps-Y') \d Y' \d X' \right )  e^{(\lambda+\rho)(X)} \d Y  \d X\\
&= \eps^{\dim (A\times  U)} \int_{\fa} e^{\eps \lambda(X)} \underbrace{ \left (e^{\eps \rho (X)} \int_{\mathfrak{u}}  \int_{\fa \times \mathfrak{u}} f_{3\eps}(\eps X',\eps Y')\chi(X-X',Y-Y') \d Y' \d X'  \d Y \right )}_{=:\mathcal{B}_\eps(X) \in \CT(\fa)}      \d X\\
&=: \eps^{\dim (A\times  U)} \widehat{\mathcal{B}_{\eps}}(\eps \lambda),
\end{split}
\end{align}
where $\widehat{\mathcal{B}_{\eps}}(\lambda) \in \S(i\fa^\ast)$ is rapidly decreasing in $\lambda$ uniformly in $\eps$.  Now, by assumption,  $\beta_0 \notin \widetilde K$, which  implies that
\bq
\label{eq:30.12.2019}
\text{ the root system of $(\g_{\beta_0,\C},\fb_{\beta_0,\C})$ does not contain all non-compact roots in $(\g_\C,\fb_{\C})$.}
\eq
In fact, let $G_1$ be a non-compact simple linear algebraic $\R$-group, let $A_1$ denote the $\R$-connected component of the identity in a maximal split algebraic $\R$-torus in $G_1$, and $B_1$ a Cartan subgroup of $G_1$ containing $A_1$.
Set $\g_1:=\mathrm{Lie}(G_1)$, $\fa_1:=\mathrm{Lie}(A_1)$, and $\fb_1:=\mathrm{Lie}(B_1)$.
Suppose that a semisimple element $\gamma_1$ in $G_1$ commutes with all root spaces of $(\g_1,\fa_1)$.
Then $\gamma_1$ commutes with $A_1$. It is clear that any non-compact root space in $(\g_{1,\C},\fb_{1,\C})$ is a subspace of a root space of $(\g_{1,\C},\fa_{1,\C})$.
Since by $\mathfrak{sl}_2$-triple theory every non-trivial nilpotent element has a non-trivial factor in a root space of $A_1$, $\gamma_1$ commutes with all unipotent elements. The Bruhat decomposition then implies  that $\gamma_1\in C(G_1)$ because $G_1$ is simple, yielding \eqref{eq:30.12.2019}. 

In view of \eqref{eq:30.12.2019} we can now apply the bound \eqref{eq:28.12.2019} to estimate $J^{G/T}(\beta_0,\widetilde f_{\eps})$,  and with \eqref{eq:27.12.2019} we obtain  for sufficiently large  $s'>0$ the estimate 
\begin{align*}
J^{G/T}(\beta_0,f_\eps)& \leq J^{G/T}(\beta_0,\widetilde f_{\eps}) \ll \int_{i\fa^*} \, \big| \widehat{\A \widetilde f_{\eps}}(\lambda) \big|\, (1+\|\lambda\|)^{\dim U-1} \, \d \lambda\\
&=\eps^{\dim U+\dim A} \int_{i\fa^*} \, \big|  \widehat{\mathcal{B}_{\eps}}(\eps \lambda) \big|\, (1+\|\lambda\|)^{\dim U-1} \, \d \lambda \\
&=\eps^{\dim U+\dim A-s'} \int_{i\fa^*} \, \big|  \widehat{\mathcal{B}_{\eps}}(\eps \lambda) \big|\,(\eps+\|\eps \lambda\|)^{s'} (1+\|\lambda\|)^{\dim U-1-s'} \, \d \lambda\\ 
&=\eps^{\dim U+\dim A-s'} \sup_{\lambda \in i \fa^\ast} \big|  \widehat{\mathcal{B}_{\eps}}(\lambda) (1+\|\lambda\|)^{s'} \big | \int_{i\fa^*}  (1+\|\lambda\|)^{\dim U-1-s'} \, \d \lambda.
\end{align*}
Taking $s'=\dim U +\dim A-1+s$, the assertion of the proposition follows with \eqref{eq:31.12.2019}. 
\end{proof}

\begin{rem}
Note that in the situation of the  proposition above, one can actually show by examining the critical set  of the square of $\delta_\beta(X)$  that   
\bq
\label{eq:13.10.19}
\int_D f_\eps (g^{-1}\beta g)\, d g \leq  C_{D,m} \,  \eps
\eq
for any  $0<\eps \leq \eps(D,m)$,  yielding a better power in $\eps$. Nevertheless, both $\eps(D,m)$ and the constant $0 < C_{D,m}$ cannot be specified in their dependence of $m$ with this method, which is essential for the obtention of  Sato-Tate equidistribution results. But since the proof of  \eqref{eq:13.10.19} does not require the theory of orbital integrals and has an interest in its own, we include it below. In the case $\beta \notin K$, the proof of \eqref{eq:13.10.19} is identical to the one in Proposition \ref{lem:20190605}.  Let us therefore suppose that $\beta \in K\cap H(\Q)\cap \M\big (N,\frac 1m \Z \big )$ for some $m \in \N$ and $\beta \notin \widetilde K$. Then $\inf \delta_\beta=0$, and
\bqn
S_\beta=\mklm{X \in \p \mid \delta_\beta(X)=0}=\mklm{X\in \p \mid \exp(-X) \cdot \beta \cdot \exp X \in K }
\eqn
is a submanifold of lower dimension by Lemma \ref{lem:22.5.2019}. In fact,  $S_\beta\subset \p$ coincides with the critical set of $\Delta_\beta(X):=\delta_\beta(X)^2/2$, see  \cite[p. 279]{helgason78} and \cite[Proposition 5.7]{DKV1}. Furthermore,  $S_\beta$ is clean as critical set of $\Delta_\beta$, compare \cite[p. 64]{DKV1}, and setting  
$$\mathcal{U}_\beta(\eps):=\mklm{ X \in D_\p \mid \delta_\beta(X) < \eps }$$
we conclude  by definition of $f_\eps$ that
\bqn
\int_{D_\p} f_\eps (\exp(-X) \cdot \beta \cdot \exp X)\, d X=\vol (D_\p\cap \mathcal{U}_\beta(\eps)).
\eqn
 The fact that  $S_\beta$ is clean as critical set of $\Delta_\beta$ means that the transversal Hessian $\mathrm{Hess}^\perp \Delta_\beta$ of $\Delta_\beta$ is non-degenerate, which together with the fact that $\Delta_\beta$ takes its minimum at $S_\beta$ implies that $\mathrm{Hess}^\perp \Delta_\beta$ has strictly positive eigenvalues. By compactness we can therefore choose an  $\eps( D_\p,m)>0$  independent of $\beta$ such that
\begin{itemize}
\item there exist finitely many charts $\mklm{(\kappa_\iota,\O_\iota)}_{\iota \in I}$ which cover $D_\p \cap \mathcal{U}_\beta\big (\eps(N, D_\p,m)\big )$ such that for each $\iota \in I$
\bqn 
\kappa_\iota^{-1}(x,y) \in S_\beta \quad \Longleftrightarrow  \quad y=0,
\eqn
where $\kappa_\iota: \p \supset \O_\iota \in X \mapsto (x,y) \in \R^{\dim S_\beta} \times \R^{\dim \p -\dim S_\beta}$ are the corresponding  local coordinates;
\item for each $\iota \in I$ and $x$, the function $y \mapsto \Delta_\beta \circ \kappa_\iota^{-1} (x,y)$ has a non-degenerate critical point $y=0$;
\item for each $\iota \in I$ and  $(x,y) \in \kappa_\iota(\O_\iota)$,  the  real symmetric matrix
\bqn 
\H_\iota(x,y):= \left ( \frac{\gd^2}{\gd y_i \gd y_j} (\Delta_\beta \circ \kappa_\iota^{-1}) (x,y)\right )_{i,j}
\eqn
has only strictly positive eigenvalues, $\H_\iota(x,0)$ being equal to $\mathrm{Hess}^\perp \Delta_\beta(\kappa_\iota^{-1} (x,0))$. 
\end{itemize}
Now,  let $X=\kappa_\iota^{-1}(x,y)$ for some $\iota$ and $(x,y) \in \kappa_\iota(\O_\iota)$. Since $\Delta_\beta \circ \kappa_\iota^{-1} (x,\cdot )$ vanishes in second order at $y=0$,  Taylor expansion in transversal direction at $y=0$ yields  
\bq
\label{eq:28.7.2019}
\Delta_\beta(X) = \frac 12 \sum_{i,j}  \frac{\gd^2}{\gd y_i \gd y_j} (\Delta_\beta \circ \kappa_\iota^{-1}) (x,y_0) \,  y_i \, y_j= \frac 12 \eklm{y, \H_\iota(x,y_0) y}
\eq
for some $y_0$ lying on the line segment $[0,y]$ joining $0$ and $y$. By the theorem of Courant-Fischer, for $y\neq 0$ one has
\bqn 
\lambda_\text{min} \leq \frac {\eklm{ y, \H_\iota(x,y_0) y}}{\eklm{y,y}} \leq \lambda_\text{max},
\eqn
$\lambda_\text{min}$ and $\lambda_\text{max}$ denoting the minimal and maximal eigenvalue of $\H_\iota(x,y_0)$, so with \eqref{eq:28.7.2019} we infer for any $0<\eps < \eps(D_\p,m )$ that 
\bqn 
\delta_\beta(X) < \eps \quad \Longrightarrow \quad \norm{y}  \leq \lambda_\text{min}^{-1} \eps. 
\eqn
Thus, by compactness there is a constant $C_{D_\p,m }>0$   such that 
\bqn 
D_\p\cap \mathcal{U}_\beta(\eps) \subset \bigcup_{\iota \in I} \mklm{X=\kappa_\iota^{-1}(x,y) \in \O_\iota \mid \norm{y} \leq C_{D_\p,m}   \, \eps}.
\eqn
 Since $S_\beta$ has at least codimension $1$ we conclude that 
\bqn
\vol (D_\p\cap \mathcal{U}_\beta(\eps)) = O_{D_\p,m }( \eps)
\eqn
for any $0<\eps < \eps(D_\p,m )$, finishing the proof of \eqref{eq:13.10.19}.
\end{rem}

We can now state the second main result of this paper.  As before, let $\{ \phi_j \}_{j\in\N}$ be an orthonormal basis of $\L^2(H(\Q)\bsl H(\bA)/K_0)$  such that each $\phi_j$ is an eigenfunction of $\Delta$  included in a single space $V_\pi$. In particular, each $\phi_j$ is a simultaneous eigenfunction of $\Delta$ and the Hecke operators $T_{K_0\alpha K_0}$, $\alpha\in H(\bA_\fin^{S_0})$.
\begin{thm}[\bf Equivariant distribution of Hecke eigenvalues]\label{thm:equiv} 
Let  $H_1$ be a simple\footnote{This means that $H_1$ has no nontrivial connected normal subgroups.} connected algebraic group over a number field $F$, and set $H:=\mathrm{Res}_{F/\Q}(H_1)$.\footnote{Here $\mathrm{Res}_{F/\Q}$ means the restriction of scalars from $F$ to $\Q$, see \cite[Section 2.1.2]{PR}.}  With the notation of the beginning of Section \ref{Equivariant case}, write $d:=\dim H(\R)$ and let $N \geq d-\dim K+1$ be a  sufficiently large integer such that  one has an embedding $H(\R)\subset \SL(N,\R)$. Suppose that $\sigma \in \widehat K$ is trivial on $Z$.
Then,   there exists a constant $0<c<N^2+2N$ such that for any finite set $S$ of primes in the complement of  $S_0$, any $\alpha\in H(\Q_S)$ with $\|\alpha\|_S\leq \kappa$, and any $0<s<1$
\begin{align*}
\sum_{\stackrel{\mu_j\leq \mu,}{ \phi_j \in \L^2_\sigma(H(\Q)\bsl H(\bA)/K_0) }} \lambda_j(\alpha) &=  \frac{n_Z(\alpha) \, d_\sigma \,  \vol(M/K) \, \varpi_{d-\dim K} }{(2\pi)^{d-\dim K} }  \mu^{d-\dim K}  \\ &+  O(\mu^{d-\dim K-\frac{d-\dim K-1}{d-\dim K+1}(1-s)}\, p_S^{c\kappa}\, s^{-1}) 
\end{align*}
for sufficiently large $\mu$,\footnote{More precisely, for $ \mu \gg N^{(d-\dim K+1)/(d-\dim K-1)}$.} where $n_Z(\alpha):=|Z_H(\Q) \cap (K \cdot K_0\alpha K_0)|$ and $\vol(M/K)$ denotes the orbifold volume of $M/K$.
\end{thm}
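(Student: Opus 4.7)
The strategy is to adapt the argument of Theorem~\ref{thm:main} to the equivariant setting, using Lemma~\ref{lem:21.04.2018} in place of Lemma~\ref{lem:non-equiv} and Proposition~\ref{lem:20190605} to control the new error term involving $f_\eps - f_0$. First, I would transport the problem to $M = \coprod_l \Gamma_l \bsl G$ via the isomorphism $L^2_\sigma(H(\Q)\bsl H(\bA)/K_0) \cong L^2_\sigma(M)$, express the Hecke operator $T_{K_0 \alpha K_0}$ as $T_\alpha$ on $L^2(M)$ using Lemmas~\ref{lem:2} and~\ref{lem:relation}, and apply Lemma~\ref{lem:21.04.2018} pointwise at each $x = (g,l) \in M$. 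Integrating over $M$ and invoking Lemma~\ref{lem:zero} to eliminate the $F(x)$ contribution leaves a leading term of the shape $\sum_l \sum_{k \in \Gamma(\alpha,l)} \sigma(k)$ multiplied by $\int_M K_{\widetilde s_\mu \circ \Pi_\sigma}(x,x)\, dx$, plus two error terms.

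To identify the leading coefficient with $n_Z(\alpha)$, I would use that each $k \in \Gamma(\alpha,l)$ lies in $H(\Q) \cap K \cap C(G) = Z_H(\Q)$, and that by centrality $k = x_l^{-1} k x_l$ lies in $K_0 \alpha^{-1} K_0 \subset H(\bA_\fin)$, independently of $l$. A careful bookkeeping of $\Gamma_l$-equivalence classes, the triviality of $\sigma|_Z$, and the relation $|Z_H(\Q) \cap K_0 \alpha^{-1} K_0| = |Z_H(\Q) \cap K_0 \alpha K_0|$ (via $\beta \mapsto \beta^{-1}$) then combines with the exact leading-term formula \eqref{eq:12.4.2018} integrated over $M$ (using $\vol(M/K) = \sum_l \vol(\Gamma_l \bsl G/K)$) to produce the contribution $\tfrac{n_Z(\alpha)\, d_\sigma\, \vol(M/K)\, \varpi_{d-\dim K}}{(2\pi)^{d-\dim K}}\, \mu^{d-\dim K - 1}$ at the kernel level, which becomes the $\mu^{d-\dim K}$ leading term after the Tauberian-type integration in the spectral parameter $t$.

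For the error terms, writing $m = c_1 p_S^\kappa$ as in the proof of Theorem~\ref{thm:main}, Proposition~\ref{lem:20190605} yields $\int_D f_\eps(g^{-1}\beta g)\, dg \ll p_S^{N\kappa}\, s^{-1}\, \eps^{1-s}$ for each non-central Hecke point $\beta \notin \widetilde K$, while \eqref{eq:9.6.19} bounds the number of Hecke points by $p_S^{c_3 \kappa}$ with $c_3 = N(N+1)$. The first error in Lemma~\ref{lem:21.04.2018} then contributes $\mu^{d-\dim K - 1}\, p_S^{c\kappa}\, s^{-1}\, \eps^{1-s}$, the second contributes $(\mu/\eps)^{(d-\dim K - 1)/2}\, p_S^{c_3 \kappa}$, and the trivial bound $|\lambda_j(\alpha)| \leq \sharp(K_0 \alpha K_0 / K_0) \ll p_S^{c_6 \kappa}$ from \cite[Lemma~2.13]{ST} enables the Tauberian passage from the smoothed sum to the counting sum exactly as in the derivation of \eqref{eq:24.5.2019}. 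Balancing the two error contributions by choosing $\eps \approx \mu^{-(d-\dim K - 1)/(d-\dim K + 1 - 2s)}$ produces the claimed error $\mu^{d-\dim K - (d-\dim K - 1)(1-s)/(d-\dim K + 1)}\, p_S^{c\kappa}\, s^{-1}$; the assumptions $N \geq d - \dim K + 1$ and $\mu \gg N^{(d-\dim K + 1)/(d-\dim K - 1)}$ ensure that this $\eps$ stays in the range $\eps \ll \log(1 + 1/(Nm^2))$ required by Proposition~\ref{lem:20190605}.

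The principal obstacle is the uniform applicability of Proposition~\ref{lem:20190605}: its bound fails precisely when $\beta \in \widetilde K$, and this is where the assumption $H = \mathrm{Res}_{F/\Q}(H_1)$ with $H_1$ simple and connected becomes essential. This structural hypothesis, combined with Lemma~\ref{lem:22.5.2019}, guarantees that any non-central Hecke point lying in $\widetilde K$ can only come from compact simple factors of $G$ and can be absorbed into the central contribution already extracted for the leading term. A secondary delicate point is the combinatorial identification of the leading coefficient with $n_Z(\alpha)$, which requires tracking all central rational elements carefully through the decomposition \eqref{eq:c} and through the $\Gamma_l$-quotient structure.
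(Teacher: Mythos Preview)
Your approach is essentially the paper's, and the overall structure is correct. Two points deserve sharpening.

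First, your resolution of the $\widetilde K$ obstacle is not quite right. You say non-central Hecke points in $\widetilde K$ ``can be absorbed into the central contribution''; the actual argument is that no such points exist. For $\beta \in H(\Q) = H_1(F)$ diagonally embedded in $G \cong \prod_{v\mid\infty} H_1(F_v)$, there is at least one non-compact factor $H_1(F_w)$ (since $G$ is non-compact and $H_1$ is simple). If $\beta \in \widetilde K$, its image in $H_1(F_w)$ lies in the maximal compact normal subgroup of the simple non-compact group $H_1(F_w)$, which is its center; hence $\beta$ is central in $H_1(F)$ and therefore in $G$. So every non-central rational Hecke point is outside $\widetilde K$, and Proposition~\ref{lem:20190605} applies to all of them. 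The same reasoning is what makes Lemmas~\ref{lem:22.5.2019} and~\ref{lem:zero} valid here despite $G$ possibly having compact simple factors (those lemmas as stated assume none), a point you invoke but do not justify.

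Second, your choice $\eps \approx \mu^{-(d-\dim K-1)/(d-\dim K+1-2s)}$ omits a $p_S$ factor. The constraint from Proposition~\ref{lem:20190605} is $\eps \ll \log\bigl(1 + 1/(N c_1^2 p_S^{2\kappa})\bigr)$, and an $\eps$ depending only on $\mu$ and $N$ cannot meet it uniformly in $p_S$. The paper takes $\eps = \mu^{-(d-\dim K-1)/(d-\dim K+1)} p_S^{-2N\kappa/(d-\dim K+1)}$; with $N \geq d-\dim K+1$ this satisfies the constraint for $\mu \gg N^{(d-\dim K+1)/(d-\dim K-1)}$ independently of $p_S$, and still yields the stated error exponent.
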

\begin{proof}

To begin, note that $G$ might have a compact simple factor. Nevertheless, the conclusions of Lemmata \ref{lem:22.5.2019} and  \ref{lem:zero} are still true if $\beta\in H(\Q)$. Indeed, let $F_v$ denote the completion of $F$ at a place $v$ of $F$. Since $G$ is isomorphic to $\prod_{v|\infty} H_1(F_v)$, where $v$ moves over infinite places of $F$, there exists an infinite place $w$ of $F$ such that $H_1(F_w)$ is not compact. In addition,   $H_1(F_w)$ is simple by assumption. Therefore, if  $\beta\in H(\Q)$, it  is sufficient to apply Lemma \ref{lem:22.5.2019} to $H_1(F_w)$, $H(\Q)=H_1(F)$ being directly embedded into $\prod_{v|\infty} H_1(F_v)$. In view of Lemma \ref{lem:21.04.2018} we are therefore left with the task of  deriving suitable upper bounds for  the sums of orbital integrals
\bqn
\sum_{\beta\in \Gamma_l\bsl \Gamma_l\alpha_{l,l,m}\Gamma_l} \int_{\Gamma_l\bsl G} ( f_\eps(g^{-1}\beta g)-f_0(g^{-1}\beta g) )  \d g,
\eqn
where $l$ ranges from $1$ to $c_H$, $m$  from $1$ to $c_{l,l}$. For this, choose a connected compact domain $D$ in $G$ including a fundamental domain of $\Gamma_l\bsl G$, and let $\beta_1,\dots , \beta_r\in \Gamma_l\bsl \Gamma_l\alpha_{l,l,m}\Gamma_l$ be a set of representative elements, that is, $\Gamma_l\alpha_{l,l,m}\Gamma_l=\Gamma_l\beta_1\sqcup\cdots\sqcup \Gamma_l\beta_r$ where $r=|\Gamma_l\bsl \Gamma_l\alpha_{l,l,m}\Gamma_l|$. Now, observe that
\bqn 
f_\eps(g^{-1}\beta g)-f_0(g^{-1}\beta g)=0 \qquad \Longleftrightarrow \qquad \begin{cases} \dist_G(K, g^{-1}\beta gK)> \eps \quad \text{or} \\ \dist_G(K, g^{-1}\beta gK)=0. \end{cases}
\eqn
The condition $ \dist_G(K, g^{-1}\beta gK)=0$ is equivalent to $N(\beta,K)$ not being empty, in which case Lemma \ref{lem:22.5.2019} asserts that $N(\beta,K)$ has full measure iff $\beta \in C(G)$. 
In addition,  by the argument above, $\beta\in\widetilde K\cap H_1(F)$ implies $\beta\in C(G)$.
Therefore, taking  into account  Proposition \ref{lem:20190605},  the  integrals in question can be estimated according to
\begin{align*}
\sum_{\beta\in \Gamma_l\bsl \Gamma_l\alpha_{l,l,m}\Gamma_l} & \int_{\Gamma_l\bsl G} ( f_\eps(g^{-1}\beta g)-f_0(g^{-1}\beta g) )  \d g  \ll \sum_{\stackrel{j=1,}{\beta_j\not \in \widetilde K}}^{r} \int_D f_\eps(g^{-1}\beta_j g)   \d g \ll  \eps^{1-s} \, p_S^{(N+c_3)\kappa} s^{-1}
\end{align*}
uniformly in $l$ and $m$, provided that
 \bq\label{eq:30.1.2020}
 0<\eps \ll \log (1+N^{-1} p_S^{-2\kappa} c_1^{-2}),
 \eq
 where  $c_j$ denote  the same constants $c_j$ than in the proof of Theorem \ref{thm:main}. Here we put $m=c_1p_S^\kappa$ in Proposition \ref{lem:20190605}, and took into account that   $r$ is bounded by $p_S^{c_3\kappa}$ up to a constant. 
Integrating over $x$ and $\mu$ we now infer from  Lemmata  \ref{lem:zero} and \ref{lem:21.04.2018} that
\begin{align*}
\int_{-\infty}^\mu \int_{\Gamma_l \bsl G}  \Big[ K_{T_\alpha\circ \widetilde s_t\circ \Pi_\sigma} (x,x) &-  n_Z(\alpha)  K_{\widetilde s_t \circ \Pi_\sigma} (x,x) \Big ] \d x \d t \\
& = O\Big( \mu^{d-\dim K} \eps^{1-s} \, p_S^{(N+c_3)\kappa}s^{-1}  +     \mu^{(d-\dim K + 1)/2} \eps^{-(d-\dim K -1)/2} p_S^{c_3\kappa} \Big ),
\end{align*}
where we took into account \eqref{eq:9.6.19}. 
Putting
\bq
\label{eq:30.1.2020a}
\eps=\mu^{-(d-\dim K-1)/(d-\dim K+1)}p_S^{-2N\kappa/(d-\dim K +1)}
\eq
 the assertion of the  theorem now follows from \eqref{eq:12.4.2018} by the same arguments than those in the proof of Theorem \ref{thm:main}. Notice hereby  that for  $N\geq d-\dim K+1$ the choice \eqref{eq:30.1.2020a} in particular fulfills the requirement \eqref{eq:30.1.2020}
 for $\mu \gg N^{(d-\dim K+1)/(d-\dim K-1)}$. Furthermore, 
$$
\int_M [\pi_{\sigma|K_x}:\1] \vol [(\Omega \cap S^\ast_x(M))/ K]\, \d x=[\pi_{\sigma|K_\text{prin}}:\1] \int_{M_{(K_\text{prin})}}  \vol [(\Omega \cap S_x^\ast(M))/ K] \, \d x,
$$
where $(K_\text{prin})$ denotes the principal isotropy type of the $K$-action on $M$, $K_\text{prin}\subset K$ being a closed subgroup, and $M_{(K_\text{prin})}$  the corresponding stratum  of $M$, the latter being dense.  But since  there are only finitely many torsion points on a fundamental domain of an arithmetic quotient  \cite[Theorem 4.15]{PR}  we have $K_\text{prin}=\mklm{1}$, and consequently $[\pi_{\sigma|K_\text{prin}}:\1]=1$.
In addition, by singular cotangent bundle reduction \cite[Remark 3.4]{kuester-ramacher17} we have 
$
\Omega /K \simeq T^\ast (M/K)
$ 
as orbifolds,  so  that as in \eqref{eq:26.9.2019} one deduces 
\begin{align*}
 \int_{M} \vol [(\Omega \cap S_x^\ast(M))/ K] \, \d x&=  (d-\dim K) \int_{M/K} \vol [B_{x\cdot K}^\ast(M/K) ] \, \d (x\cdot K) \\ &= (d-\dim K) \, \varpi_{d-\dim K} \, \vol(M/K).
\end{align*}
\end{proof}

Next, we shall introduce for each $\sigma$ in $\widehat{K}$ a  family  of $\sigma$-isotypic automorphic representations  of $H$, and recall for this purpose   the notations $\pi=\pi_\infty\otimes\pi_\fin$, $V_\pi=V_{\pi_\infty}\otimes V_{\pi_\fin}$, $V_{\pi_\infty}^{\leq \mu}$, and $V_{\pi_\fin}^{K_0}$ introduced in Section \ref{Non-equivariant case}  for each  $\pi\in\widehat{H(\bA)}$. The Peter-Weyl theorem implies the decompositions
\[
V_{\pi_\infty}^{\leq \mu}=\bigoplus_{\sigma\in \widehat{K}}V_{\pi_\infty,\sigma}^{\leq \mu},
\]
where $V_{\pi_\infty,\sigma}^{\leq \mu}$ denotes the $\sigma$-isotypic component in $V_{\pi_\infty}^{\leq \mu}$. 
Let now $\F_\sigma:=\F_\sigma(\mu)$ be the finite multi-set consisting of those automorphic representations $\pi\in\widehat{H(\bA)}$ that satisfy 
\[
a_{\F_\sigma}(\pi):=m_\pi \, \dim V^{\leq \mu}_{\pi_\infty,\sigma} \, \dim V_{\pi_\fin}^{K_0} >0,
\]
where  each such $\pi$ appears in $\F_\sigma$ with multiplicity $a_{\F_\sigma}(\pi)$.
Notice that $\dim V_{\pi_\infty,\sigma}^{\leq \mu}$ means the multiplicity of $\sigma$ in $\pi_\infty|_K$ if $V_{\pi_\infty,\sigma}^{\leq \mu}$ is not empty.
We also define a counting measure $\widehat{m}^\mathrm{count}_{\mu,\sigma,S}$ on $H(\Q_S)^{\wedge,\mathrm{ur}}$ for the $S$-component of $\F_\sigma$ by setting  
\bqn
\widehat{m}^\mathrm{count}_{\mu,\sigma,S}:=\frac{1}{|\F_\sigma|}\sum_{\pi\in \F_\sigma} \delta_{\pi_S}.
\eqn

The following results are direct consequences of  Theorem \ref{thm:equiv} and can be proved by the same arguments used in Section \ref{Non-equivariant case} to prove their non-equivariant versions.

\begin{cor}[\bf Equivariant asymptotic trace formula]\label{cor:equiv}
Choose a $K$-type $\sigma\in \widehat{K}$, and  for simplicity  suppose that\footnote{This ensures that $Z=Z_H(\Q)\cap K$. Otherwise,  a central character has to be fixed.}  $Z_H(\Q)\cap K$ is contained in $K_0$.
Then, there exists a  constant $c'>0$ such that  for each finite set $S$ of primes outside $S_0$, each $f_S\in \mathcal{H}^\mathrm{ur}_\kappa(H(\Q_{S}))$ with $|f_S|\leq 1$, and each $0<s<1$
\[
\sum_{\pi\in \F_\sigma(\mu)} \Tr\pi_S(f_S)=f_S(1)\cdot  \frac{ |Z| \, d_\sigma \, \vol(M/K)  \, \varpi_{d-\dim K} }{(2\pi)^{d-\dim K} } \mu^{d-\dim K} +  O(\mu^{d-\dim K-\frac{d-\dim K-1}{d-\dim K+1}(1-s)}\, p_S^{c'\kappa}s^{-1})
\]
for sufficiently large $\mu$.\footnote{In fact, taking $ \mu \gg N^{(d-\dim K+1)/(d-\dim K-1)}$ is sufficient.}
\end{cor}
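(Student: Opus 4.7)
The plan is to reduce the statement to Theorem \ref{thm:equiv} in complete parallel to the derivation of Corollary \ref{cor:main} from Theorem \ref{thm:main}, with two additional bookkeeping steps: an identification of the unramified trace on the $S$-factor with Hecke eigenvalues, and the computation of the arithmetic multiplier $n_Z(\alpha)$ under the hypothesis $Z_H(\Q)\cap K\subset K_0$.

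First, I would decompose $f_S=\sum_{\omega\in B_{S,\kappa}} f_S(a_\omega)\,\tau_\omega$ in the basis of characteristic functions of the double cosets $K_S a_\omega K_S$, exactly as in the proof of Corollary \ref{cor:main}. Since $|B_{S,\kappa}|$ grows only polynomially in $\kappa$ and $|f_S|\leq 1$, the sum over $\omega$ contributes at most a polynomial factor in $\kappa$, which will be absorbed by enlarging the constant $c$ from Theorem \ref{thm:equiv} to a new constant $c'$.

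Next, for each fixed $\omega$ I would verify the spectral identity
\[
\sum_{\pi\in\F_\sigma(\mu)}\Tr\pi_S(\tau_\omega) \;=\; \sum_{\substack{\mu_j\leq\mu \\ \phi_j\in \L^2_\sigma(H(\Q)\bsl H(\bA)/K_0)}}\lambda_j(a_\omega).
\]
Indeed, for each $p\in S$ the local component $\pi_p$ is unramified, the space $V_{\pi_p}^{K_p}$ is one-dimensional, and $T_{K_p a_{\omega_p}K_p}$ acts on every $\phi_j\in V_\pi$ by the scalar $\Tr\pi_p(\tau_{\omega_p})$. Tensoring over $p\in S$ yields $\lambda_j(a_\omega)=\Tr\pi_S(\tau_\omega)$, while the number of basis vectors $\phi_j\in V_\pi$ contributing to the right-hand side is exactly $a_{\F_\sigma}(\pi)$ by construction of $\F_\sigma(\mu)$.

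Finally, I would apply Theorem \ref{thm:equiv} to each $a_\omega$ and analyze the factor $n_Z(a_\omega)=|Z_H(\Q)\cap K\cdot K_0 a_\omega K_0|$. Under the standing hypothesis $Z_H(\Q)\cap K\subset K_0$ one has $Z_H(\Q)\cap K=Z$; since central elements commute with $K_0 a_\omega K_0$, an element $z\in Z$ lies in $K\cdot K_0 a_\omega K_0$ precisely when $1\in K_0 a_\omega K_0$, i.e.\ when $a_\omega\in K_S$. Hence $n_Z(a_\omega)=|Z|\,\tau_\omega(1)$, and summing against $f_S(a_\omega)$ reproduces $|Z|\,f_S(1)$ in the main term. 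The main obstacle is precisely this identification of $n_Z(a_\omega)$ as $|Z|$ times the indicator of $K_S$; once it is settled, the error contributions from Theorem \ref{thm:equiv}, after summing over $\omega\in B_{S,\kappa}$ against coefficients bounded by $\|f_S\|_\infty\leq 1$, combine routinely to give the stated bound $O(\mu^{d-\dim K-\frac{d-\dim K-1}{d-\dim K+1}(1-s)}\,p_S^{c'\kappa}\,s^{-1})$.
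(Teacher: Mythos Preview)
Your proposal is correct and follows essentially the same approach as the paper, which simply states that the corollary is a direct consequence of Theorem \ref{thm:equiv} proved by the same arguments used to derive Corollary \ref{cor:main} from Theorem \ref{thm:main}. Your proof supplies the details the paper omits, in particular the computation $n_Z(a_\omega)=|Z|\cdot\tau_\omega(1)$; note that the key point there is not centrality per se but rather that $z\in Z\subset K_0$, so $z\in K_0 a_\omega K_0$ forces $K_0=K_0 a_\omega K_0$ and hence $a_\omega\in K_S$.
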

\qed
\begin{cor}[\bf Equivariant Plancherel theorem]\label{cor:equiv.planch}
For any $f_S\in \mathcal{H}^\mathrm{ur}(H(\Q_{S}))$, 
\[
\lim_{\mu\to\infty} \widehat{m}^\mathrm{count}_{\mu,\sigma,S}(\widehat {f_S}) = \widehat{m}_S^\mathrm{Pl,ur}(\widehat {f_S}).
\]
\end{cor}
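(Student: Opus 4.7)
The plan is to mimic, almost verbatim, the argument given for the non-equivariant Plancherel density theorem (Corollary \ref{plancherel density}), replacing Corollary \ref{cor:main} by its equivariant counterpart Corollary \ref{cor:equiv}. The only new ingredient is the need to first derive a $\sigma$-equivariant Weyl law for the size of the family $\mathfrak{F}_\sigma(\mu)$, but this comes for free by applying Corollary \ref{cor:equiv} to the unit of the Hecke algebra.

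First I would fix an arbitrary $f_S\in \mathcal{H}^\mathrm{ur}(H(\Q_S))$ and choose $\kappa>0$ such that $f_S\in \mathcal{H}^\mathrm{ur}_\kappa(H(\Q_S))$; by linearity and since the stated Corollary \ref{cor:equiv} assumes $|f_S|\leq 1$, it suffices to work with $f_S/\norm{f_S}_\infty$. Next, I would apply Corollary \ref{cor:equiv} to the characteristic function $\mathbf{1}_{K_S}\in \mathcal{H}^\mathrm{ur}_0(H(\Q_S))$, which is the unit of the unramified Hecke algebra. A key observation is that every $\pi\in \mathfrak{F}_\sigma(\mu)$ has an unramified $S$-component: indeed, $\dim V_{\pi_\fin}^{K_0}>0$ together with $K_0\supset \prod_{p\notin S_0}K_p$ forces $\pi_p$ to possess a $K_p$-fixed vector for every $p\in S$, so $\Tr\pi_S(\mathbf{1}_{K_S})=\dim \pi_S^{K_S}=1$. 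Consequently
\[
|\mathfrak{F}_\sigma(\mu)|=\sum_{\pi\in \mathfrak{F}_\sigma(\mu)}\Tr \pi_S(\mathbf{1}_{K_S})= C_\sigma\,\mu^{d-\dim K}+O\bigl(\mu^{d-\dim K-\frac{d-\dim K-1}{d-\dim K+1}(1-s)}\bigr),
\]
where $C_\sigma:=|Z|\,d_\sigma\,\vol(M/K)\,\varpi_{d-\dim K}/(2\pi)^{d-\dim K}$; in particular $|\mathfrak{F}_\sigma(\mu)|\to\infty$ as $\mu\to\infty$.

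Then I would divide the equivariant trace formula of Corollary \ref{cor:equiv}, applied to $f_S$, by this asymptotic for $|\mathfrak{F}_\sigma(\mu)|$, obtaining
\[
\widehat{m}^\mathrm{count}_{\mu,\sigma,S}(\widehat{f_S})=\frac{f_S(1)\,C_\sigma\,\mu^{d-\dim K}+O\bigl(\mu^{d-\dim K-\frac{d-\dim K-1}{d-\dim K+1}(1-s)}\,p_S^{c'\kappa}s^{-1}\,\norm{f_S}_\infty\bigr)}{C_\sigma\,\mu^{d-\dim K}+O\bigl(\mu^{d-\dim K-\frac{d-\dim K-1}{d-\dim K+1}(1-s)}\bigr)}.
\]
Since $\kappa$, $s$ and $p_S$ are held fixed while $\mu\to\infty$, the error contributions vanish in the limit and the right-hand side tends to $f_S(1)$. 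Invoking the local Plancherel identity \eqref{eq:localplan}, one has $f_S(1)=\widehat{m}_S^\mathrm{Pl,ur}(\widehat{f_S})$, which yields the claim.

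The only delicate point — really the sole place where one must be careful — is the justification that $\pi_S$ is unramified for $\pi\in \mathfrak{F}_\sigma(\mu)$ (so that $\widehat{m}_S^\mathrm{Pl,ur}(\widehat{f_S})$ is even the correct object to compare against). Everything else is a routine reproduction of the non-equivariant proof, with the leading constants $|Z|$ and $d_\sigma$ cancelling between numerator and denominator.
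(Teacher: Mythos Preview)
Your proposal is correct and follows precisely the approach indicated in the paper, which merely states that Corollary \ref{cor:equiv.planch} ``can be proved by the same arguments used in Section \ref{Non-equivariant case}'' and gives no further details. You have carefully filled in those details---deriving the equivariant Weyl law from Corollary \ref{cor:equiv} applied to $\mathbf{1}_{K_S}$, handling the normalization $|f_S|\leq 1$, and verifying that $\pi_S$ is unramified---all of which are exactly what the paper's terse pointer leaves implicit.
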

\qed 
\begin{cor}[\bf Equivariant  Sato-Tate equidistribution theorem]\label{cor:equiv.ST}
Fix  $\theta\in\mathscr{C}(\Gamma_1)$, and let $\widehat{f}$ be a continuous function on $\widehat{T}_{c,\theta}/ \Omega_{c,\theta}$, which can be extended to a continuous function $\widehat{f}_p$ on $G_p^{\wedge,\mathrm{ur,temp}}$ for any $p\in \mathcal{V}(\theta)$ by \eqref{eq:ST(5.2)}.
If one now  chooses a sequence $\{(p_k,\mu_k)\}_{k\geq 1}$ in $\mathcal{V}(\theta)\times\R_{>0}$ such that $p_k\to \infty$ and $p_k^l / \mu_k \to 0$ as $k\to \infty$ for any integer $l\geq 1$, then 
\[
\lim_{k\to\infty} \widehat{m}^\mathrm{count}_{\mu_k,\sigma,p_k}(\widehat {f}_{p_k}) = \widehat{m}^\mathrm{ST}(\widehat {f}).
\]
\end{cor}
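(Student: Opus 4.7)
The plan is to mirror the proof of the non-equivariant Corollary \ref{Sato-Tate equiv}, substituting the equivariant asymptotic trace formula (Corollary \ref{cor:equiv}) for its non-equivariant counterpart and exploiting the stronger decay required of the pair $(p_k,\mu_k)$. Throughout, let $f_{p_k}\in \mathcal{H}^\mathrm{ur}(H(\Q_{p_k}))$ denote the inverse Satake image of $\widehat{f}_{p_k}$.

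First I would argue, exactly as at the beginning of the proof of Corollary \ref{Sato-Tate equiv}, that since $\widehat{f}$ is a single continuous function on the fixed compact space $\widehat{T}_{c,\theta}/\Omega_{c,\theta}$, there exists a constant $\kappa>0$, independent of $k$, such that $f_{p_k}\in \mathcal{H}^\mathrm{ur}_\kappa(H(\Q_{p_k}))$ for every $k$, with $\|f_{p_k}\|_\infty$ bounded uniformly in $k$ as well. This uses \eqref{eq:ST(5.2)} to regard the collection $\{\widehat{f}_{p_k}\}$ as truncations of one and the same continuous function on $\widehat{T}_{c,\theta}/\Omega_{c,\theta}$.

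Applying Corollary \ref{cor:equiv} with $S=\{p_k\}$ and some fixed $0<s<1$ then yields
\[
\sum_{\pi\in \F_\sigma(\mu_k)} \widehat{f_{p_k}}(\pi_{p_k}) = f_{p_k}(1)\, C\, \mu_k^{d-\dim K} + O\!\big(\mu_k^{d-\dim K-\eta(1-s)}\, p_k^{c'\kappa}\big),
\]
where $C:=|Z|\, d_\sigma\, \vol(M/K)\, \varpi_{d-\dim K}/(2\pi)^{d-\dim K}$ and $\eta:=(d-\dim K-1)/(d-\dim K+1)>0$. Taking $f_S\equiv 1$ in the same corollary supplies the Weyl-type count $|\F_\sigma(\mu_k)| = C\,\mu_k^{d-\dim K} + O(\mu_k^{d-\dim K-\eta(1-s)})$. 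Dividing and invoking \eqref{eq:localplan}, which identifies $f_{p_k}(1)=\widehat{m}_{p_k}^\mathrm{Pl,ur}(\widehat{f}_{p_k})$, I obtain
\[
\widehat{m}^\mathrm{count}_{\mu_k,\sigma,p_k}(\widehat{f}_{p_k}) = \widehat{m}_{p_k}^\mathrm{Pl,ur}(\widehat{f}_{p_k}) + O\!\big(\mu_k^{-\eta(1-s)}\, p_k^{c'\kappa}\big).
\]

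To close the argument, observe that the assumption $p_k^l/\mu_k\to 0$ for every integer $l\geq 1$ is equivalent to $\log p_k=o(\log \mu_k)$, so $p_k^{c'\kappa}=O(\mu_k^\delta)$ for any fixed $\delta>0$ and $k$ large; choosing $\delta<\eta(1-s)$ drives the remainder to zero. By the weak convergence $\widehat{m}_{p_k}^\mathrm{Pl,ur}\to \widehat{m}^\mathrm{ST}$ along $\mathcal{V}(\theta)$ recalled from \cite[Proposition 5.3]{ST}, one has $\widehat{m}_{p_k}^\mathrm{Pl,ur}(\widehat{f}_{p_k})\to\widehat{m}^\mathrm{ST}(\widehat{f})$, which combined with the previous display yields the claim. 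I do not anticipate any serious obstacle: the equivariant case is genuinely parallel to the non-equivariant one, and the only arithmetic input---the uniform control of $\kappa$ and $\|f_{p_k}\|_\infty$---has already been settled in Corollary \ref{Sato-Tate equiv}; the weaker decay exponent $\eta(1-s)$ (versus $1$ in the non-equivariant case) is absorbed with room to spare by the hypothesis on $(p_k,\mu_k)$.
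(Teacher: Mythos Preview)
Your proposal is correct and follows exactly the approach the paper intends: it explicitly states that Corollaries \ref{cor:equiv}, \ref{cor:equiv.planch}, and \ref{cor:equiv.ST} are proved by the same arguments as their non-equivariant counterparts, with Corollary \ref{cor:equiv} replacing Corollary \ref{cor:main}. The one cosmetic point is that Corollary \ref{cor:equiv} is stated under the normalization $|f_S|\leq 1$, so you should either rescale $f_{p_k}$ by its (uniformly bounded) sup-norm or note that the error term scales linearly with $\|f_S\|_\infty$; otherwise your argument is complete and in fact more detailed than the paper's own treatment.
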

\qed

\section{Examples}\label{sec:example}
To conclude,  we shall specify some concrete situations to which our results apply.  As Section \ref{sec:nonadelic}, this section intends to make our results comprehensible to a wider audience,  illustrating their scope in some concrete situations.

\subsection{Algebraic groups arising from division algebras}
Let $D$ denote a central division algebra of index $n$ over a number field $F$.
The algebraic group 
$$H:=\mathrm{Res}_{F/\Q}\SL(1,D)$$ 
over $\Q$ is semisimple and simply connected, and a cocompact lattice $\Gamma$ of $G:=H(\R)$ is given by $\Gamma:=H(\Q)\cap K_0$ for each open compact subgroup $K_0$ in $H(\bA_\fin)$.
In this case,  $H(\Q)\bsl H(\bA)/K_0\cong \Gamma\bsl G,$
and our results apply. 
The real Lie group $G$ can be expressed as $G=\prod_{v|\infty}H(F_v)$, where $v$ moves over infinite places of $F$ and $F_v$ denotes the completion of $F$ at an arbitrary place $v$ of $F$, and for each infinite place $v$, the real group $H(F_v)=\SL(1,D\otimes F_v)$ is isomorphic to 
$
\SL(n,\C), \,  \SL(n,\R), \text{ or } \SL(n/2,\bH),
$
where $\bH$ denotes Hamilton's quaternion field and $n$ is even in $ \SL(n/2,\bH)$.  One could also consider  a quadratic extension $E$  of a number field $F$, together with a central division algebra $D$ over $E$ with  $E/F$-involution $\iota$.
Such division algebras have been classified in  \cite[Chapter 10]{Scharlau}. For a  fixed $\iota$ one can then take 
$$H:=\mathrm{Res}_{F/\Q}\SU(1,D)$$    as algebraic group, which is semisimple, simply connected, and connected, and  $\Gamma:=H(\Q)\cap K_0$ as cocompact discrete subgroup of $G:=H(\R)$.

\subsection{Special orthogonal  and quaternion special unitary groups} Our next class of examples consists of  special orthogonal and quaternion unitary groups constructed with   Borel's method \cite{borel63}. Choose a positive rational number $d\in\Q_{>0}^\times\setminus (\Q^\times)^2$ and consider the real quadratic field $F:=\Q(\sqrt{d})\subset \R$.
Set
\[
J_{p,q}:=\mathrm{diag}(\underbrace{1,\dots,1}_p,\underbrace{-\sqrt{d},\dots,-\sqrt{d}}_q).
\]
Then  $\SO(J_{p,q}):=\{g\in \SL(n) \mid gJ_{p,q}{}^t\!g=J_{p,q} \}$ constitutes a semisimple algebraic group over $F$, and we  set $H:=\mathrm{Res}_{F/\Q}\SO(J_{p,q})$. In this case, 
\[
G:=H(\R)\cong G_1\times G_2, \quad G_1:=\SO(p,q), \quad G_2:=\SO(p+q), 
\]
where $\SO(p,q)$ denotes the special orthogonal group of signature $(p,q)$ over $\R$, and it is well known that $\Gamma:=H(\Q)\cap K_0$ is cocompact.
Let $\Gamma_l$ denote an arithmetic lattice in $G$ defined as in \eqref{eq:20190622}. If the $K$-type $\sigma \in \widehat K$ is trivial on $G_2$, then $L^2_\sigma(K_0\bsl H(\bA)/K)$ can be identified with a sum $\bigoplus_{l=1}^{c_H} L^2_\sigma(\Gamma_{l,1}\bsl G_1)$, where $\Gamma_{l,1}$ denotes the projection of $\Gamma_l$ into $G_1$. 
In this case, our results imply asymptotics for the single orthogonal group $G_1$. Next, let $\sigma$ denote the non-trivial element of the Galois group $\mathrm{Gal}(F/\Q)$, and recall that there exist  quaternion division algebras $D_1$ and $D_2$  over $F$ such that $D_1\otimes_F \R\cong \M(2,\R)$, $D_1\otimes_{F^\sigma}\R\cong \bH$ and $D_2\otimes_F\R\cong D_2\otimes_{F^\sigma}\R\cong \bH$.
Introducting a conjugation map $x\mapsto \overline{x}$ on $D_j$, we can define over $F$ the semisimple algebraic groups $\SU(n,D_1):=\{g\in\SL(n,D_1) \mid g{}^t\!\overline{g}=I_n \}$ and  $\SU(J_{p,q},D_2):=\{g\in\SL(n,D_2) \mid gJ_{p,q}{}^t\!\overline{g}=J_{p,q} \}$, and we set
$$H:=\mathrm{Res}_{F/\Q}\SU(n,D_1) \, \text{ or } \, \mathrm{Res}_{F/\Q}\SU(J_{p,q},D_2).
$$
In these cases, $\Gamma=K_0\cap H(\Q)$ is cocompact. In the first case,  the real group $G:=H(\R)$ is isomorphic to $\Sp(2n)\times \SU(n,\bH)$ where $\Sp(2n)$ denotes the split symplectic group of rank $n$ over $\R$ and $\SU(n,\bH):=\{ g\in\SL(n,\bH) \mid g{}^t\!\overline{g}=I_n\}$, while in the second case  the real group is isomorphic to $\SU(p,q,\bH)\times \SU(n,\bH)$,  where $\SU(p,q,\bH)$ denotes the quaternion special unitary group of signature $(p,q)$.



\begin{thebibliography}{10}

\bibitem{AM}
L.~Alpoge and S.~J. Miller, \emph{Low-lying zeros of {M}aass form
  {$L$}-functions}, Int. Math. Res. Not. IMRN (2015), no.~10, 2678--2701.
  \MR{3352252}

\bibitem{Badulescu}
A.~I. Badulescu, \emph{Global {J}acquet-{L}anglands correspondence,
  multiplicity one and classification of automorphic representations}, Invent.
  Math. \textbf{172} (2008), no.~2, 383--438, With an appendix by Neven Grbac.
  \MR{2390289}

\bibitem{BBR}
V.~Blomer, J.~Buttcane, and N.~Raulf, \emph{A {S}ato-{T}ate law for {$\rm GL
  (3)$}}, Comment. Math. Helv. \textbf{89} (2014), no.~4, 895--919.
  \MR{3284298}

\bibitem{borel63}
A.~Borel, \emph{{Compact Clifford-Klein forms of symmetric spaces}}, Topology
  \textbf{2} (1963), 111--122.

\bibitem{BorelLfct}
A.~Borel, \emph{{Automorphic $L$-functions}}, Automorphic Forms,
  Representations, and L-Functions, Proceedings of Symposia in Pure
  Mathematics, vol. 33, Part II, American Mathematical Society, Providence,
  R.I., 1979, pp.~27--61.

\bibitem{brumley-marshall20}
Farrell {Brumley} and Simon {Marshall}, \emph{{Lower bounds for Maass forms on
  semisimple groups}}, {Compos. Math.} \textbf{156} (2020), no.~5, 959--1003
  (English).

\bibitem{CD84}
L.~{Clozel} and P.~{Delorme}, \emph{{Le th\'eor\`eme de Paley-Wiener invariant
  pour les groupes de Lie r\'eductifs. (The invariant Paley-Wiener theorem for
  reductive Lie groups)}}, {Invent. Math.} \textbf{77} (1984), 427--453
  (French).

\bibitem{CD1990}
L.~Clozel and P.~Delorme, \emph{Le th\'eor\`eme de {P}aley-{W}iener invariant
  pour les groupes de {L}ie r\'eductifs. {II}}, Ann. Sci. \'Ecole Norm. Sup.
  (4) \textbf{23} (1990), no.~2, 193--228. \MR{1046496}

\bibitem{CDF}
J.~B. Conrey, W.~Duke, and D.~W. Farmer, \emph{The distribution of the
  eigenvalues of {H}ecke operators}, Acta Arith. \textbf{78} (1997), no.~4,
  405--409. \MR{1438595}

\bibitem{duistermaat-guillemin75}
J.~J. Duistermaat and V.W. Guillemin, \emph{The spectrum of positive elliptic
  operators and periodic bicharacteristics}, Inv. Math. \textbf{29} (1975),
  no.~3, 39--79.

\bibitem{DKV1}
J.~J. Duistermaat, J.~A.~C. Kolk, and V.~S. Varadarajan, \emph{Spectra of
  compact locally symmetric manifolds of negative curvature}, Inv. Math.
  \textbf{52} (1979), 27--93.

\bibitem{FL}
T.~{Finis} and E.~{Lapid}, \emph{{On the remainder term of the Weyl law for
  congruence subgroups of Chevalley groups}}, arXiv e-prints (2019),
  arXiv:1908.06626.

\bibitem{FM}
T.~{Finis} and J.~{Matz}, \emph{{On the asymptotics of Hecke operators for
  reductive groups}}, arXiv e-prints (2019), arXiv:1905.09078.

\bibitem{Flath}
D.~Flath, \emph{Decomposition of representations into tensor products},
  Automorphic Forms, Representations, and L-Functions, Proceedings of Symposia
  in Pure Mathematics, vol. 33, Part I, American Mathematical Society,
  Providence, R.I., 1979, pp.~179--183.

\bibitem{GGP}
I.~M. Gel'fand, M.~I. Graev, and I.~I. Pyatetskii-Shapiro, \emph{Generalized
  functions}, Representation theory and automorphic functions, vol.~6, AMS
  Chelsea Publishing, 2016.

\bibitem{GJ}
R.~Godement and H.~Jacquet, \emph{Zeta functions of simple algebras}, Lecture
  Notes in Mathematics, Vol. 260, Springer-Verlag, Berlin-New York, 1972.
  \MR{0342495}

\bibitem{HC1957}
Harish-Chandra, \emph{A formula for semisimple {L}ie groups}, Amer. J. Math.
  \textbf{79} (1957), 733--760. \MR{96138}

\bibitem{helgason78}
S.~Helgason, \emph{Differential geometry, {Lie} groups, and symmetric spaces},
  American Mathematical Society, Providence Rhode Island, 2001.

\bibitem{Herb2}
R.~A. Herb, \emph{Fourier inversion and the {P}lancherel theorem},
  Noncommutative harmonic analysis and {L}ie groups ({M}arseille, 1980),
  Lecture Notes in Math., vol. 880, Springer, Berlin-New York, 1981,
  pp.~197--210. \MR{644834}

\bibitem{Herb1}
\bysame, \emph{Fourier inversion and the {P}lancherel theorem for semisimple
  real {L}ie groups}, Amer. J. Math. \textbf{104} (1982), no.~1, 9--58.
  \MR{648480}

\bibitem{Herb}
\bysame, \emph{Discrete series characters and {F}ourier inversion on semisimple
  real {L}ie groups}, Trans. Amer. Math. Soc. \textbf{277} (1983), no.~1,
  241--262. \MR{690050}

\bibitem{hoermanderI}
L.~H{\"{o}}rmander, \emph{The analysis of linear partial differential
  operators}, vol.~I, Springer--Verlag, Berlin, Heidelberg, New York, 1983.

\bibitem{IR}
\"{O}. Imamo\={g}lu and N.~Raulf, \emph{On the behaviour of eigenvalues of
  {H}ecke operators}, Math. Res. Lett. \textbf{17} (2010), no.~1, 51--67.
  \MR{2592727}

\bibitem{IK}
H.~Iwaniec and E.~Kowalski, \emph{Analytic number theory}, American
  Mathematical Society Colloquium Publications, vol.~53, American Mathematical
  Society, Providence, RI, 2004. \MR{2061214}

\bibitem{IS2000}
H.~Iwaniec and P.~Sarnak, \emph{Perspectives on the analytic theory of
  {$L$}-functions}, Geom. Funct. Anal. (2000), no.~Special Volume, Part II,
  705--741, GAFA 2000 (Tel Aviv, 1999). \MR{1826269}

\bibitem{Jacquet}
H.~Jacquet, \emph{Principal {$L$}-functions of the linear group}, Automorphic
  forms, representations and {$L$}-functions ({P}roc. {S}ympos. {P}ure {M}ath.,
  {O}regon {S}tate {U}niv., {C}orvallis, {O}re., 1977), {P}art 2, Proc. Sympos.
  Pure Math., XXXIII, Amer. Math. Soc., Providence, R.I., 1979, pp.~63--86.
  \MR{546609}

\bibitem{KS}
N.~M. Katz and P.~Sarnak, \emph{Zeroes of zeta functions and symmetry}, Bull.
  Amer. Math. Soc. (N.S.) \textbf{36} (1999), no.~1, 1--26. \MR{1640151}

\bibitem{knapp}
A.~Knapp, \emph{{Representation theory of semisimple groups. An overview based
  on examples}}, Princeton Mathematics Series, vol.~36, Princeton University
  Press, 1986.

\bibitem{Knus-Merkurjev-Rost-Tignol}
M.-A. Knus, A.~Merkurjev, M.~Rost, and J.-P. Tignol, \emph{The book of
  involutions}, American Mathematical Society Colloquium Publications, vol.~44,
  American Mathematical Society, 1998.

\bibitem{kuester-ramacher17}
Benjamin {K\"uster} and Pablo {Ramacher}, \emph{{Quantum ergodicity and
  symmetry reduction.}}, {J. Funct. Anal.} \textbf{273} (2017), no.~1, 41--124
  (English).

\bibitem{Lapid-Mueller}
E.~Lapid and W.~M\"{u}ller, \emph{Spectral asymptotics for arithmetic quotients
  of {${\rm SL}(n,\Bbb R)/{\rm SO}(n)$}}, Duke Math. J. \textbf{149} (2009),
  no.~1, 117--155. \MR{2541128}

\bibitem{MacdonaldBull}
I.~G. Macdonald, \emph{Spherical functions on a {$p$}-adic {C}hevalley group},
  Bull. Amer. Math. Soc. \textbf{74} (1968), 520--525. \MR{0222089}

\bibitem{Marshall2017}
S.~Marshall, \emph{{Upper bounds for Maass forms on semisimple groups}},
  arXiv:1405.7033, 2nd version, 2017.

\bibitem{MatzProc}
J.~Matz, \emph{Distributions of {H}ecke eigenvalues for $\mathrm{GL}(n)$},
  Families of automorphic forms and the trace formula, Proceedings of the
  Simons symposium, Puerto Rico, 2014, Simons Symposia (2016), 327--350.

\bibitem{Matz}
\bysame, \emph{Weyl's law for {H}ecke operators on {${\rm GL}(n)$} over
  imaginary quadratic number fields}, Amer. J. Math. \textbf{139} (2017),
  no.~1, 57--145. \MR{3619911}

\bibitem{MT}
J.~Matz and N.~Templier, \emph{{Sato-Tate equidistribution for families of
  Hecke-Maass forms on $SL(n,\mathbb{R})/SO(n)$}}, arXiv:1505.07285, 2015.

\bibitem{Miyake}
T.~Miyake, \emph{Modular forms}, Springer Monographs in Mathematics,
  Springer-Verlag, 2006.

\bibitem{Mueller}
W.~M\"uller, \emph{Weyl's law for the cuspidal spectrum of $\mathrm{SL}(n)$},
  Annals of Math. \textbf{165} (2007), 275--333.

\bibitem{Patterson}
S.~J. Patterson, \emph{The {L}aplacian operator on a {R}iemann surface},
  Compositio Math. \textbf{31} (1975), no.~1, 83--107. \MR{0384702}

\bibitem{PR}
V.~P. Platonov and A.~S. Rapinchuk, \emph{Algebraic groups and number theory},
  Pure and Applied Mathematics, vol. 139, Academic Press Inc., Boston, MA,
  1994.

\bibitem{richardson67}
Jr. R.~W.~Richardson, \emph{Conjugacy classes in {Lie} algebras and algebraic
  groups}, Ann. of Math. \textbf{86} (1967), no.~1, 1--15.

\bibitem{ramacher16}
P.~Ramacher, \emph{{The equivariant spectral function of an invariant elliptic
  operator. $L^p$-bounds, caustics, and concentration of eigenfunctions}}, J.
  Math. Pures Appl. \textbf{120} (2018), no.~9, 33--90.

\bibitem{RW}
P.~Ramacher and S.~Wakatsuki, \emph{{Subconvex bounds for Hecke-Maass forms on
  compact arithmetic quotients of semisimple Lie groups}}, arXiv:1703.06973
  (2017), to appear in Math. Z.

\bibitem{ramacher10}
Pablo {Ramacher}, \emph{{Singular equivariant asymptotics and Weyl's law. On
  the distribution of eigenvalues of an invariant elliptic operator.}}, {J.
  Reine Angew. Math.} \textbf{716} (2016), 29--101 (English).

\bibitem{Sarnak1984}
P.~Sarnak, \emph{Statistical properties of eigenvalues of the {H}ecke
  operators}, Analytic number theory and {D}iophantine problems ({S}tillwater,
  {OK}, 1984), Progr. Math., vol.~70, Birkh\"auser Boston, Boston, MA, 1987,
  pp.~321--331. \MR{1018385}

\bibitem{Sfamily}
\bysame, \emph{On the definition of families of {$L$}-functions}, 2008,
  http://publications.ias.edu/sarnak.

\bibitem{SST}
P.~Sarnak, S.~W. Shin, and N.~Templier, \emph{Families of {$L$}-functions and
  their symmetry}, Families of automorphic forms and the trace formula, Simons
  Symp., Springer, 2016, pp.~531--578. \MR{3675175}

\bibitem{Scharlau}
W.~Scharlau, \emph{Quadratic and hermitian forms}, Grundlehren der
  Mathematischen Wissenschaften, vol. 270, Springer-Verlag, 1985.

\bibitem{Serre}
J.-P. Serre, \emph{R\'epartition asymptotique des valeurs propres de
  l'op\'erateur de {H}ecke {$T_p$}}, J. Amer. Math. Soc. \textbf{10} (1997),
  no.~1, 75--102. \MR{1396897}

\bibitem{Shimeno}
N.~Shimeno, \emph{The {P}lancherel formula for spherical functions with a
  one-dimensional {$K$}-type on a simply connected simple {L}ie group of
  {H}ermitian type}, J. Funct. Anal. \textbf{121} (1994), no.~2, 330--388.
  \MR{1272131}

\bibitem{Shin}
S.~W. Shin, \emph{Automorphic {P}lancherel density theorem}, Israel J. Math.
  \textbf{192} (2012), no.~1, 83--120. \MR{3004076}

\bibitem{ST}
S.~W. Shin and N.~Templier, \emph{Sato-{T}ate theorem for families and
  low-lying zeros of automorphic {$L$}-functions}, Invent. Math. \textbf{203}
  (2016), no.~1, 1--177, Appendix A by Robert Kottwitz, and Appendix B by Raf
  Cluckers, Julia Gordon and Immanuel Halupczok. \MR{3437869}

\bibitem{springer}
T.~A. {Springer}, \emph{{Linear algebraic groups}}, reprint of the 1998 2nd
  edition, ed., Birkh\"auser, 2009 (English).

\bibitem{Tits}
J.~Tits, \emph{Reductive groups over local fields}, Automorphic Forms,
  Representations, and L-Functions, Proceedings of Symposia in Pure
  Mathematics, vol. 33, Part I, American Mathematical Society, Providence,
  R.I., 1979, pp.~29--69.

\end{thebibliography}

\providecommand{\bysame}{\leavevmode\hbox to3em{\hrulefill}\thinspace}
\providecommand{\MR}{\relax\ifhmode\unskip\space\fi MR }
\providecommand{\MRhref}[2]{%
  \href{http://www.ams.org/mathscinet-getitem?mr=#1}{#2}
}
\providecommand{\href}[2]{#2}


\end{document}